\newtheorem{theorem}{Theorem}[section]
\newtheorem*{theorem*}{Theorem}
\newtheorem{corollary}{Corollary}[theorem]
\newtheorem{lemma}[theorem]{Lemma}
\newtheorem{proposition}[theorem]{Proposition}
\newtheorem{claim}[theorem]{Claim}
\newtheorem{definition}[theorem]{Definition}
\newtheorem{notation}[theorem]{Notation}
\newcommand{\eqdef}{\vcentcolon=}
\DeclareMathAlphabet{\mbb}{U}{bbold}{m}{n}
\DeclareMathOperator{\rows}{rows}
\DeclareMathOperator{\spec}{Spec}
\DeclareMathOperator{\Sp}{sp}
\DeclareMathOperator{\colsp}{colsp}
\DeclareMathOperator{\rowsp}{rowsp}
\newcommand{\RNum}[1]{\uppercase\expandafter{\romannumeral #1\relax}}
\DeclareMathOperator{\rk}{rk}
\DeclareMathOperator{\rktwo}{rk_{2}}
\newcommand{\longdash}{\text{---}}
\newcommand\incircbin
\newcommand\@incircbin[2]
\def\and{%
  \end{tabular}%
  \hskip0em \relax
  \begin{tabular}[t]{c}}
\author{Gal Beniamini \\ \footnotesize The Hebrew University of Jerusalem
        \and 
        Asaf Etgar \\ \footnotesize The Hebrew University of Jerusalem
        \and
        Yael Kirkpatrick \\ \footnotesize Massachusetts Institute of Technology}
\title{The $\FF_2$-Rank and Size of Graphs}
\date{\vspace{-3ex}}
\begin{document}

\maketitle

\begin{abstract}
We consider the extremal family of graphs of order $2^n$ in which no two vertices have identical neighbourhoods, yet the adjacency matrix has rank \textit{only} $n$ over the field of two elements. A previous result from algebraic geometry shows that such graphs exist for all even $n$ and do not exist for odd $n$. In this paper we provide a new combinatorial proof for this result, offering greater insight to the structure of graphs with these properties. We introduce a new graph product closely related to the Kronecker product, followed by a construction for such graphs for any even $n$. Moreover, we show that this is an infinite family of strongly-regular quasi-random graphs whose signed adjacency matrices are symmetric Hadamard matrices. Conversely, we provide a combinatorial proof that for all \textit{odd} $n$, no twin-free graphs of minimal $\mathbb{F}_2$-rank exist, and that the next best-possible rank $(n+1)$ \textit{is} attainable, which is tight. 
\end{abstract}

\section{Introduction}

Every graph is naturally associated with a symmetric Boolean matrix encoding its adjacency operator. The rank of this adjacency matrix, and its relation to other measures, are well-studied. For instance, determining the largest possible gap between the \textit{chromatic number} and \textit{rank} of a graph is known \cite{lovasz1988lattices} to be equivalent to the famous log-rank conjecture of communication complexity (see \cite{comm_complexity_book_nk, lovett2014recent}). This question remains wide open, despite its centrality, with an exponential gap separating the best-known upper \cite{lovett2016communication, kotlov1997rank} and lower \cite{goos2018deterministic, kushilevitz1994} bounds. A related question is comparing the \textit{rank} and \textit{order} of a graph. In this context, we restrict the discussion to \textit{twin-free} graphs (i.e., where no two vertices have the same neighbourhoods), since twins contribute to the order without affecting the rank. Kotlov and Lov{\'a}sz \cite{kotlov1996rank} showed that any twin-free graph of order $2^n$ must have \textit{real} rank at least $2n - \mathcal{O}(1)$, which is tight. We revisit this question, substituting \textit{real} rank for the rank over $\mathbb{F}_2$. Since the proof of \cite{kotlov1996rank} does not generalize to finite fields, it is natural to ask whether an equivalent result holds even over $\mathbb{F}_2$ \footnote{Achieving a \textit{linear} separation is trivial. For instance, arranging the vectors of a subspace $V \le \mathbb{F}_2^{2^n}$ of dimension $\dim(V) = n$ as the \textit{biadjacency} matrix of a bipartite graph, yields a twin-free graph of $\mathbb{F}_2$-rank $2n$ and order $2^{n+1}$.}. 

This question was answered by Godsil and Royle in \cite{godsil2001algebraic}. Using tools from symplectic flows and polarity graphs, they showed that the binary rank of the adjecncy matrix of a graph is always even (e.g. \cite[Theorem 8.10.1]{godsil2001algebraic}). Furthremore, they showed that every twin-free graph with no isolated vertices of rank $2r$ is an induced subgraph of the symplectic graph $Sp(2r)$. In particular, this proves that any twin-free graph of size $2^n$ and $\FF_2$ rank $n$ is unique.

    % The rang over \mathbb F_2 of the adjacency matrix of a graph is always even (Theorem 8.10.1)
    % Every reduced graph (i.e. a twin-free graph with no isolated vertices) of rank 2r is an induced subgraph of the symplectic graph Sp(2r), which proves in particular that Sp(2r) is the unique graph on 4^r-1 vertices of rank 2r.

Building upon this result, we provide an alternative combinatorial proof. Our first result is constructing a family of twin-free graphs of order $2^n$ and $\FF_2$-rank $n$ for all even $n$. We note that the uniqueness result given by the algebraic proof of Godsil and Royle can also be obtained from our construction.

\begin{restatable}{thm}{familylowrank}
    \label{thm:infinite_family_twinfree_lowf2rank}
	For every even $n$, there exists a twin-free graph $G$ such that $v(G) = 2^n$ and $\rktwo{\left(G\right)}=n$. 
\end{restatable}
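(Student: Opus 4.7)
I will construct the family by iterating a Kronecker-like graph product starting from a small base graph. Take $G_0 = K_3 \sqcup K_1$ on $4 = 2^2$ vertices: its adjacency matrix $A_0$ over $\FF_2$ has three nonzero rows summing to zero, so $\rktwo A_0 = 2$, and its four rows are pairwise distinct, so $G_0$ is twin-free. For graphs $G_1, G_2$ with adjacency matrices $A_1, A_2$, define the product $G_1 \diamond G_2$ on $V(G_1) \times V(G_2)$ by $(u_1, u_2) \sim (v_1, v_2)$ iff exactly one of $u_1 \sim_{G_1} v_1$ and $u_2 \sim_{G_2} v_2$ holds; equivalently, $A(G_1 \diamond G_2) = A_1 \otimes J + J \otimes A_2$ over $\FF_2$ (and in sign form, its signed adjacency is the Kronecker product of the signed adjacencies of the factors --- hence ``closely related'' to the Kronecker product). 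For even $n = 2k$, I will take $G = G_0^{\diamond k}$.

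\textbf{Closure properties.} Zero diagonal and symmetry of $A(G_1 \diamond G_2)$ are immediate from the definition. For twin-freeness, a direct analysis shows that $(u_1, u_2), (v_1, v_2)$ are twins in the product iff there is a constant $c \in \FF_2$ with $A_1(u_1, w_1) + A_1(v_1, w_1) = c$ for all $w_1$ and $A_2(u_2, w_2) + A_2(v_2, w_2) = c$ for all $w_2$; the case $c = 0$ forces a genuine twin pair in each factor, while $c = 1$ forces a complementary (``anti-twin'') pair in each factor. This leads me to track the combined invariant ``no twins and no anti-twins,'' which holds for $G_0$ by inspection and is preserved by $\diamond$ by the same splitting argument. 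Consequently, the only substantive task is to prove rank additivity $\rktwo(G_1 \diamond G_2) = \rktwo G_1 + \rktwo G_2$.

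\textbf{Rank additivity (main obstacle).} The upper bound follows from subadditivity of rank and $\rktwo(A \otimes J) = \rktwo A$. For the matching lower bound, I exploit the fact that $G_0$ has an isolated vertex --- i.e., $A_0$ has a zero row --- a property inherited by every $\diamond$-power. When $A_2$ has zero row $j_0$, the row of $A_1 \otimes J + J \otimes A_2$ indexed by $(i, j_0)$ is exactly $(A_1)_i \otimes \mathbf{1}$; varying $i$ recovers $\rowsp(A_1) \otimes \mathbf{1}$ inside the product's rowspace, and symmetrically $\mathbf{1} \otimes \rowsp(A_2)$ is recovered. The full rowspace of the product then equals the sum $\rowsp(A_1) \otimes \mathbf{1} + \mathbf{1} \otimes \rowsp(A_2)$, of dimension $\rktwo A_1 + \rktwo A_2$ unless the two subspaces share a nonzero vector, which can only happen through $\mathbf{1} \otimes \mathbf{1}$ and hence requires $\mathbf{1}$ to lie in both $\rowsp(A_1)$ and $\rowsp(A_2)$. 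A direct check shows $\mathbf{1} \notin \rowsp(A_0)$; a short entrywise argument (setting $\mathbf{1}_n \otimes \mathbf{1}_m = r \otimes \mathbf{1} + \mathbf{1} \otimes s$ forces $r$ and $s$ to be constant) shows this condition is itself preserved by $\diamond$. Hence the lower bound is tight at every stage of the iteration.

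\textbf{Conclusion.} An induction on $k$ then gives that $G = G_0^{\diamond k}$ has $4^k = 2^n$ vertices, is twin-free, and has $\FF_2$-rank exactly $n$. As a conceptual sanity check, the construction coincides with the graph on $\FF_2^n$ whose edges are determined by the standard symplectic bilinear form $B$: non-degeneracy of $B$ accounts for twin-freeness, its alternating nature for the zero diagonal, and $\rk B = n$, achievable precisely when $n$ is even, for the target $\FF_2$-rank.
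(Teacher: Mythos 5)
Your proposal is correct and follows essentially the same route as the paper: the same base graph ($C_3$ plus an isolated vertex), the same parity product, and the same combined ``twin-free and negation-free'' invariant preserved through the iteration. The only divergence is the rank lower bound, where the paper simply observes that a twin-free graph on $2^{2k}$ vertices has $2^{2k}$ distinct rows and hence $\FF_2$-rank at least $2k$, making your more delicate direct proof of rank additivity (via the rowspace decomposition and tracking $\mathbf{1}\notin\rowsp(A)$) unnecessary, though valid.
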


A key component in our proof of \autoref{thm:infinite_family_twinfree_lowf2rank} is the ``Kronecker Parity Product'', a new graph operator. The parity product $G \boxplus H$ is a graph over the vertices $V(G) \times V(H)$, in which $(a,x) \sim (b,y)$ if and only if \textit{exactly one} of $a \sim b$ and $x \sim y$ holds. Unlike the Kronecker Product, which is well-known to be \textit{multiplicative} with respect to the rank (over any field), the parity product is only \textit{sub-additive} with respect to $\mathbb{F}_2$-rank. Moreover, under certain mild conditions, the product of two twin-free graphs remains twin-free (\textit{and} retains the necessary invariants). These properties, together with a simple base-case for $n=2$, allow us to construct an infinite family of twin-free graphs of order $2^n$ and $\mathbb{F}_2$-rank $n$, for all even $n$. Our family of graphs is extremal in several regards. For instance, for every pair of vertices\footnote{Other than the unique isolated vertex, which exists as the adjacency matrix of any twin-free graph of order $2^n$ and $\mathbb{F}_2$-rank $n$ corresponds to a subspace $V \le \mathbb{F}_2^{2^n}$ of dimension $n$, whose vectors can be arranged into a symmetric matrix with an all-zero diagonal.} in every such graph, the pairwise intersections between their neighbourhoods and their complements, partition the vertex set evenly (into quarters). Consequently, we obtain a family of strongly-regular quasi-random graphs \cite{brouwer2012strongly,hubaut1975strongly, chung1989quasi}, whose $\{\pm 1\}$-\textit{signed} adjacency operators are symmetric Hadamard matrices with a constant diagonal \cite{hedayat1978hadamard, wallis1972hadamard}. 

Our second result concerns the case of odd $n$ where we provide a combinatorial proof that \textit{no} such graphs exist.

\begin{restatable}{thm}{nooddn}
	\label{thm:no_odd_n}
	For every odd $n$, there are no twin-free graphs $G$ of order $v(G) = 2^n$ and $\rktwo{\left(G\right)}=n$.
\end{restatable}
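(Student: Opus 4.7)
The plan is to prove \autoref{thm:no_odd_n} via the purely matrix-theoretic fact that the $\mathbb{F}_2$-rank of the adjacency matrix of any simple graph is always even. Neither the twin-free hypothesis nor the order condition $v(G) = 2^n$ enters the argument; all that is used is that an adjacency matrix $A$ is symmetric with zero diagonal. Once this parity fact is in hand, $\rktwo(G) = n$ must be even, contradicting $n$ being odd.

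The core lemma is therefore: every symmetric matrix $A \in \mathbb{F}_2^{N \times N}$ with $A_{ii} = 0$ for all $i$ has even rank. I would prove this by induction on $N$ via symmetric Gaussian elimination, i.e.\ congruence transformations $A \mapsto P A P^T$, which preserve rank, symmetry, and the zero-diagonal property. The case $A = 0$ is trivial. Otherwise, since the diagonal vanishes there exist $i \neq j$ with $A_{ij} = 1$; relabel so that $(i, j) = (1, 2)$. For each $k \geq 3$ with $A_{1k} = 1$, take $P = I + e_k e_2^T$ (simultaneously add row $2$ to row $k$ and column $2$ to column $k$): a direct calculation shows this toggles $A_{1k}$ by $A_{12} = 1$, zeroing it, while leaving every other entry of rows $1$ and $2$ unchanged and preserving the zero diagonal. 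A symmetric pass with $P = I + e_k e_1^T$ clears each $A_{2k}$. The resulting matrix is block-diagonal with a $2 \times 2$ block $\left(\begin{smallmatrix} 0 & 1 \\ 1 & 0 \end{smallmatrix}\right)$ supported on positions $1, 2$ and a symmetric zero-diagonal $(N-2) \times (N-2)$ matrix $A'$ supported on positions $3, \ldots, N$. By the induction hypothesis $\rk(A')$ is even, so $\rk(A) = 2 + \rk(A')$ is even.

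The only place that needs care is the elimination bookkeeping --- verifying that each congruence step zeroes its targeted entry without reintroducing nonzero entries in the portion already cleared, and without breaking the zero-diagonal property at the modified position $(k, k)$. Both checks reduce to the single explicit update formula $(P A P^T)_{ij} = A_{ij} + \delta_{ik} A_{rj} + \delta_{jk} A_{ir}$ (valid because $A_{rr} = 0$) for the rank-one $P = I + e_k e_r^T$, from which the claimed behaviour is read off directly. Everything else --- the induction and the deduction of the theorem --- is routine. Graph-theoretically, the same elimination step amounts to replacing the neighbourhood of vertex $k$ with its symmetric difference with that of vertex $r$ (and symmetrising), which keeps the argument firmly in the combinatorial register promised by the paper.
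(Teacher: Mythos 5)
Your proof is correct, and it establishes something strictly stronger than \autoref{thm:no_odd_n}: every simple graph, regardless of order or twin-freeness, has even $\mathbb{F}_2$-rank, because a symmetric zero-diagonal matrix over $\mathbb{F}_2$ is an alternating form and your congruence reduction extracts $\left(\begin{smallmatrix}0&1\\1&0\end{smallmatrix}\right)$ blocks two ranks at a time (the bookkeeping you flag does check out: with $P=I+e_ke_r^{\top}$ and $A_{rr}=0$ the update touches only row and column $k$, so earlier cleared entries and the diagonal survive each pass). This is, however, a genuinely different route from the paper's. The evenness of the binary rank is precisely the Godsil--Royle result (Theorem 8.10.1 of their book) that the paper cites in the introduction and then deliberately sets aside; the authors instead run a long structural induction that uses both hypotheses you discard. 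They arrange the rows of $A$ as the $2^n$ vectors of an $n$-dimensional subspace (their Proposition~\ref{prop:rk_vs_rows}, which needs twin-freeness and order exactly $2^n$), decompose $A$ into coset blocks $B$, $B+U$, $B+U^{\top}$, $B+U^{\top}+\hat U$, prove $\rktwo(B)=n-2$, and after a case analysis on whether $x,w$ lie in $\rowsp(C)$ extract a twin-free symmetric zero-diagonal submatrix of rank $n-2$ or $n-4$, descending to the hand-checked base cases $n\in\{1,3\}$. What your argument buys is brevity, generality, and independence from the extremal hypotheses --- essentially an elementary, elimination-based re-proof of the known parity fact; what the paper's argument buys is the block-structure machinery (the coset decomposition, the balancedness of $x$ and $w$, the $J_4\otimes D$ factorization) that exposes how such extremal matrices are built, which is the ``greater insight to the structure'' the authors advertise and which feeds their uniqueness discussion. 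As a standalone proof of the stated theorem, yours is complete; just be aware that it is a reproof of a result the paper explicitly attributes to prior work rather than a reconstruction of the paper's own argument.
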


For $n=1$ we have either two isolated vertices (which are twins), or a single edge (which is of rank 2), so clearly no such graphs exist. For $n=3$ it is similarly possible to show, through case-analysis, that no constructions are possible. Extending this claim to all odd $n$ is the main technical part of our paper, and the proof is rather involved. Our approach relies heavily on the symmetries of adjacency matrices whose rows and columns are precisely the vectors of a linear subspace of $\mathbb{F}_2^{2^n}$, of dimension $n$. In particular we show that, up to permutation, such matrices have a certain block-structure -- a fact which plays a key role in our proof. Finally, we remark that the parity-product construction of \autoref{thm:infinite_family_twinfree_lowf2rank} implies the existence of twin-free graphs of order $2^n$ and rank $(n+1)$, for all \textit{odd} $n$, which is tight.

\section{Preliminaries}
Throughout this paper, all graphs are simple and undirected. We use the following standard definitions and notation. For a graph $G$, we denote its vertex and edge sets by $V(G)$ and $E(G)$, respectively, and their cardinalities by $v(G)$ and $e(G)$. For a vertex $v$, we denote its neighbour-set by $N(v)$. If $v$ and $u$ are two adjacent vertices in $G$, we write $v\sim u$. The adjacency matrix of a graph $G$ over a field $\mathbb{F}$ is the matrix $A_G\in \mathbb{F}^{|V| \times |V|}$, where $(A_G)_{u,v} = \mathbbm{1}_{u\sim v}$. The spectrum of $G$ is defined as the spectrum of its adjacency matrix \textit{over the reals}. The rank of $G$ over $\mathbb{F}$ is defined to be the rank of its adjacency matrix over $\mathbb{F}$, namely $\rk_{\mathbb{F}}(G) \eqdef \rk_{\mathbb{F}}(A_G)$. In particular, we denote the rank over $\mathbb{F}_2$ by $\rktwo$.

When referring to a vector $u\in \FF_2^n$, we donate by $\bar{u}$ its negation. For $b\in \{0,1\}$, we write $u +b$ to mean $u + b^n$, adding $b$ at every coordinate. We denote by $u^{-1}(0)$ the coordinates at which $u$ has zeros and similarly by $u^{-1}(1)$ the coordinates at which $u$ has ones. We sometimes denote the all-zero and all-one vectors by $\mbb{1}$ and $\mbb{0}$, respectively, and specify their dimension when not clear from context. 

Finally, let us provide the following definitions used throughout this paper, relating to common families of matrices and graphs.

\begin{definition}(Hadamard Matrix)
\label{def:hadamard_matrices}
     A matrix $H\in M_{n}(\cbk{\pm 1})$ is a \textbf{Hadamard Matrix} if its rows are mutually orthogonal. Equivalently, if $HH^\top = n I_n$.
\end{definition}

\begin{definition} (Line Graph) 
     The \textbf{Line Graph} of a graph $G$ is the graph $\mathcal{L}(G)=(E(G), \hat{E})$, where: 
     \[ \forall (e_1, e_2) \in {E(G) \choose 2}: e_1 \underset{\mathcal{L}(G)}{\sim} e_2 \iff e_1 \cap e_2 \ne \emptyset .\]
\end{definition}

\begin{definition} (Strongly Regular Graph)
\label{def:strongly_regular_graph}
    A graph $G$ is said to be \textbf{Strongly Regular} with intersection array $[v,k,\lambda,\mu]$ if and only if $G$ is $k$-regular of order $v$, and $\forall x \ne y \in V(G)$ we have:
    \[ |N(x) \cap N(y)| = \begin{cases}
        \lambda, & x \sim y \\
        \mu, & x \nsim y
    \end{cases}. \]
\end{definition}

\begin{theorem} (Quasi-Random Graph) \cite{chung1989quasi}
\label{thm:quasi_random_graphs}
 Let $\mathbf{\Gamma} = \{\Gamma_n\}$ be an infinite sequence of graphs, where $v(\Gamma_n)=n$ and $e(\Gamma_n) = \left(p \pm o(1)\right){n \choose 2}$, for some $p = p(n) > 0$. The following are all equivalent:
 \begin{enumerate}
     \item $\Gamma_n$ is \textbf{Quasi-Random}.
     \item $\forall\: X,Y \subseteq V(\Gamma_n)$, $\big|e(X,Y) - p|X||Y|\big| = o(n^2)$.
     \item $\sum_{x,y \in V(\Gamma_n)} \abs{\abs{N(x)\cap N(y)} - p^2 n} = o(n^3)$.
     \item Let $\spec(\Gamma_n) = \lambda_1 \ge \dots \ge \lambda_n$. Then $\abs{\lambda_1 - pn} = o(n) \text{ and } \max\{|\lambda_2|, |\lambda_n|\} = o(n) $.
 \end{enumerate}
\end{theorem}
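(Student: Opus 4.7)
The plan is to establish the equivalences by proving a cycle of implications through the eigenvalue characterization (4), which is the most analytically tractable. The classical Chung--Graham--Wilson approach proceeds $(4) \Rightarrow (2) \Rightarrow (3) \Rightarrow (4)$, and then connects property (1), which is typically taken as a $C_4$-count or discrepancy condition equivalent to one of (2)--(4).

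For $(4) \Rightarrow (2)$, I would invoke the Expander Mixing Lemma. Writing the adjacency matrix $A$ in its spectral decomposition and letting $\mathbbm{1}_X, \mathbbm{1}_Y$ denote indicator vectors,
$$e(X,Y) \;=\; \mathbbm{1}_X^\top A \,\mathbbm{1}_Y \;=\; \tfrac{\lambda_1}{n}|X||Y| \;+\; \sum_{i \ge 2} \lambda_i \langle \mathbbm{1}_X, v_i\rangle \langle v_i, \mathbbm{1}_Y\rangle.$$
The assumption $|\lambda_1 - pn| = o(n)$ absorbs into the main term, and Cauchy--Schwarz applied to the tail gives a bound of $\max\{|\lambda_2|,|\lambda_n|\} \sqrt{|X||Y|} = o(n) \cdot n = o(n^2)$. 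For $(2) \Rightarrow (3)$, I would expand the codegree sum via $|N(x)\cap N(y)| = \sum_z A_{xz}A_{zy}$ and interpret the summation as an edge count between pairs of neighbourhoods; (2) then bounds the deviations in a second moment sense after averaging.

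The cleanest step is $(3) \Rightarrow (4)$, via trace identities. Since $\sum_{x,y} |N(x)\cap N(y)|^2 = \mathrm{tr}(A^4) = \sum_i \lambda_i^4$ and $\sum_{x,y} |N(x)\cap N(y)| = \mathrm{tr}(A^2) \cdot$(degree average terms), expanding the square
$$\sum_{x,y}\bigl(|N(x)\cap N(y)| - p^2 n\bigr)^2 \;=\; \sum_i \lambda_i^4 \;-\; 2p^2 n \sum_i \lambda_i^2 \;+\; p^4 n^3$$
(after accounting for the edge-count normalization) together with the hypothesis gives $\sum_i \lambda_i^4 = p^4 n^4 + o(n^4)$. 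Combined with $\sum_i \lambda_i^2 = (p+o(1))n^2$ from the known edge density, a convexity/extremal argument forces one eigenvalue to sit near $pn$ and all others to be $o(n)$ in magnitude, which is precisely (4). Finally, (1) is tied in through the observation that $\mathrm{tr}(A^4)$ counts closed walks of length four, equivalently $C_4$-homomorphisms, and this count governs standard formulations of quasi-randomness.

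The main obstacle is quantitative bookkeeping of the $o(\cdot)$ errors across implications so that an $o(n^k)$ bound in one property yields a compatible $o(n^{k'})$ bound in the next; the delicate point is the Cauchy--Schwarz step in the expander mixing argument and the extraction of a single dominant eigenvalue from the fourth-moment estimate, where one must separate $\lambda_1$'s contribution from the bulk without assuming regularity a priori.
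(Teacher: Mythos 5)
This statement is not proved in the paper at all: it is quoted verbatim as a known result of Chung, Graham and Wilson \cite{chung1989quasi}, and the paper only ever \emph{uses} it (specifically implication $(3)\Rightarrow(1)$, in Corollary \ref{cor:H_n_quasi_random}). So there is no in-paper argument to compare against; what follows is an assessment of your sketch on its own terms. Your overall plan -- cycling through the eigenvalue condition and using the trace identities $\sum_{x,y}\abs{N(x)\cap N(y)}^2=\operatorname{tr}(A^4)=\sum_i\lambda_i^4$ -- is the standard route, and your $(3)\Rightarrow(4)$ step is essentially correct once you upgrade the $L^1$ hypothesis in (3) to an $L^2$ bound via the trivial estimate $\abs{N(x)\cap N(y)}\le n$ (giving $o(n^4)$ from $o(n^3)$), and once you fix the identity $\sum_{x,y}(A^2)_{xy}=\sum_x d_x^2$, which is \emph{not} $\operatorname{tr}(A^2)$ times an averaged degree; condition (3) does supply $\sum_x d_x^2=p^2n^3+o(n^3)$, so the expansion closes.

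There are two genuine gaps. First, in $(4)\Rightarrow(2)$ you replace the top spectral term $\lambda_1\langle\mathbbm{1}_X,v_1\rangle\langle v_1,\mathbbm{1}_Y\rangle$ by $\tfrac{\lambda_1}{n}\abs{X}\abs{Y}$, which assumes $v_1$ is essentially the normalized all-ones vector. The graphs here are not assumed regular, so this needs the argument that $2e(\Gamma_n)=\mathbbm{1}^\top A\mathbbm{1}\le\lambda_1\langle\mathbbm{1},v_1\rangle^2/n\cdot n+o(n^2)$ combined with $\lambda_1=pn+o(n)$ forces $\langle\mathbbm{1},v_1\rangle^2=(1-o(1))n$; you flag this as delicate but do not supply it, and without it the expander mixing step does not go through. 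Second, $(2)\Rightarrow(3)$ is dismissed in one sentence (``bounds the deviations in a second moment sense after averaging''), but this is the hardest leg of the cycle: the usual proof routes through a counting lemma showing that discrepancy pins down the number of labelled $C_4$'s at $(1+o(1))p^4n^4$, and only then recovers the codegree condition by running the second-moment identity backwards and applying Cauchy--Schwarz. That counting step is a real argument, not bookkeeping, and as written your cycle of implications has a hole there. Finally, note that item (1) is definitional in \cite{chung1989quasi} (quasi-randomness is \emph{defined} by membership in the equivalence class, typically anchored at the $C_4$-count property), so ``tying it in'' is a matter of stating which property you take as the definition rather than an additional implication to prove.
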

\section{Twin-Free Graphs of Minimal $\FF_2$-rank}

The statement of \autoref{thm:infinite_family_twinfree_lowf2rank} concerns the construction of twin-free graphs whose $\FF_2$-rank is minimal with respect to their order. The same problem can also be viewed as a question regarding particular subspaces of $\FF_2^{2^n}$, whose vectors can be arranged in a manner corresponding to the adjacency matrix of a graph (i.e., as symmetric matrices having an all-zero main diagonal).  \\

\hspace{-0.2in}\textbf{A Graph Problem} Is there an infinite family of graphs $\{ G_n \}$ such that:

\begin{itemize}[leftmargin=.5in]
	\item (Order $2^n$) \ $v(G_n) = 2^n$.
	\item (Twin-Free) $\forall u,v \in V(G_n):\ N(u) \ne N(v)$.
	\item ($\FF_2$-rank $n$) \ $\forall u \in V(G_n):\ \exists S \subseteq \left(V(G_n) \setminus \{u\}\right):\ \sum_{v \in S} \mathbbm{1}_{N(v)} = \mathbbm{1}_{N(u)}$ (summing over $\FF_2$).
\end{itemize} 
\vspace{0.1in}

\hspace{-0.2in}\textbf{A Subspace Problem} Is there an infinite family of subspaces $\big\{ V_n = \{v^1, \dots, v^{2^n}\} \le \FF_2^{2^n} \big\}$ such that:
\begin{itemize}[leftmargin=.5in]
	\item ($\FF_2$-rank $n$) \ $\dim(V_n) = n$.	
	\item (Symmetric) $
	\begin{bmatrix}
	\vert & & \vert \\
	v^{1} & \dots & v^{2^n}   \\
	\vert & & \vert
	\end{bmatrix}
	 =
	\begin{bmatrix}
	\text{---} & v^{1} & \text{---} \\
	& \vdots \phantom{A} & \\
	\text{---} & v^{2^n} & \text{---}
	\end{bmatrix}
	$.
	\item (Zero-Trace) $\forall i \in [2^n]: v^{i}_i = 0$. 
\end{itemize}

For very small values of $n$ it is not hard to construct such graphs. Consider the graph composed of the cycle $C_3$, together with an isolated vertex. This graph indeed satisfies the above properties, as the neighbour set of any vertex in the cycle is spanned by the $\FF_2$-sum of its two neighbour. The isolated vertex is spanned by the sum of neighbourhoods of \textit{all} vertices in the cycle. By enumeration, it is also evident that this construction is unique (for $n=2$). 

\begin{figure}[h]
	\centering
	\includegraphics[width=7cm]{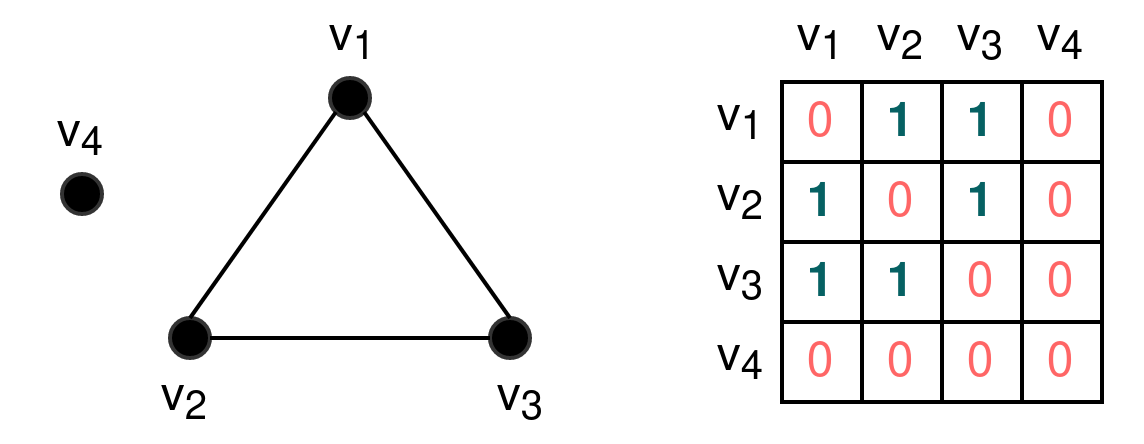}
	\caption{A twin-free graph of order $4$ and $\FF_2$-rank $2$.}
	\label{fig: twin free order four}
\end{figure}

\subsection{Warmup: Line-Graph Construction}

Constructing arbitrarily large twin-free graphs with no consideration for their $\FF_2$-rank is straightforward. For example, consider the complete graph $K_n$: it has rank roughly $n$ (its adjacency matrix is $J+I$), and is trivially twin-free. Perhaps we could transform such a graph into a new \textit{larger} twin-free graph, while also not increasing the $\FF_2$-rank by too much? To this end, let us consider its \textit{line-graph}.

%\begin{definition}(Line Graph)
%	Let $G$ be a graph. The \textbf{Line Graph} $\mathcal{L}(G)$ is defined as follows:
%	
%	\begin{equation*}
%		\begin{split}
%			&V\left(\mathcal{L}(G)\right) = E(G), \quad\quad \forall \{u,v\} \ne \{x,y\} \in E(G): \{u,v\} \underset{\mathcal{L}(G)}{\sim} \{x,y\} \iff \{u,v\} \cap \{x,y\} \ne \emptyset
%		\end{split}
%	\end{equation*}
%\end{definition}

Observe that for any graph $G$ of minimal degree $\delta(G) > 3$, the line graph $\mathcal{L}(G)$ is twin-free. Assume for sake of contradiction that this were not the case, and let $\{u,v\}, \{x,y\} \in E(G)$ be two edges such that $(u \ne x) \land (u \ne y)$. Since $\delta(G) > 3$, there exists some $w \notin \{x,y,u,v\}$ such that $u \sim w$ (in $G$). Thus, the $\{u,v\} \sim \{u,w\}$, whereas $\{u,w\} \nsim \{x,y\}$: a contradiction. Thus, the line graph $\mathcal{L}(K_n)$ for any $n > 4$ is twin-free. However, for such graphs to be good candidates, we must still compute their $\FF_2$-rank. The following claim shows that this family of graphs -- line-graphs of cliques -- exhibits a nearly quadratic separation between order and $\FF_2$-rank.

\begin{claim}($\FF_2$-rank of Line-Graphs of Cliques)
	\[\forall n > 4:\ \rktwo \left(\mathcal{L}(K_n)\right) \le \begin{cases}
		n-2, & n \equiv 0 \pmod 2,\\
		n-1, & n \equiv 1 \pmod 2.
	\end{cases} \]
\end{claim}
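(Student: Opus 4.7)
The plan is to express the columns of $A_{\mathcal{L}(K_n)}$ in terms of a small spanning set, and then count linear relations among them. For each vertex $i \in [n]$ of the underlying clique, let $\mathrm{star}(i) \subseteq E(K_n)$ denote the set of edges incident to $i$, and let $s_i \eqdef \mathbbm{1}_{\mathrm{star}(i)} \in \mathbb{F}_2^{\binom{n}{2}}$. My first step is to show that for every edge $e = \{i,j\}$, the column of $A_{\mathcal{L}(K_n)}$ indexed by $e$ equals $s_i + s_j$ over $\mathbb{F}_2$. This is immediate from the definition of the line graph: for any other edge $f \neq e$, we have $f \sim e$ in $\mathcal{L}(K_n)$ iff $f$ shares exactly one endpoint with $e$, which in $\mathbb{F}_2$ is precisely $s_i(f)+s_j(f)$; while on the diagonal $s_i(e)+s_j(e)=1+1=0$, matching the zero-diagonal of $A_{\mathcal{L}(K_n)}$.

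Consequently, $\mathrm{colsp}_2(A_{\mathcal{L}(K_n)}) \subseteq \mathrm{span}_{\mathbb{F}_2}\{s_1, \dots, s_n\}$. The second step is to determine the dimension of $W \eqdef \mathrm{span}_{\mathbb{F}_2}\{s_1,\dots,s_n\}$. A linear relation $\sum_i c_i s_i = 0$ forces $c_i + c_j = 0$ for every edge $\{i,j\}$ of $K_n$, and since $K_n$ is connected this means all $c_i$ are equal. Hence the only nontrivial relation is $\sum_{i=1}^n s_i = 0$ (which does hold, as each edge lies in exactly two stars), giving $\dim W = n-1$.

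The third step refines this: the column space is not all of $W$, but rather the subspace obtained from \emph{even}-weight coefficient vectors. Indeed, any $\mathbb{F}_2$-sum of pairwise sums $s_i + s_j$ produces a coefficient vector $c \in \mathbb{F}_2^n$ with $\sum_i c_i = 0$, and conversely every such $c$ is a sum of pairs. Thus
\[
\mathrm{colsp}_2(A_{\mathcal{L}(K_n)}) = \Bigl\{\, \textstyle\sum_{i=1}^n c_i s_i \;:\; c \in \mathbb{F}_2^n,\ \sum_i c_i = 0 \,\Bigr\}.
\]
The map $c \mapsto \sum c_i s_i$ from the even-weight subspace $C \le \mathbb{F}_2^n$ (of dimension $n-1$) onto the column space has kernel $C \cap \{0,\mathbbm{1}\}$.

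Finally, I split on parity. When $n$ is odd, the all-ones vector $\mathbbm{1}$ has odd weight and therefore lies outside $C$, so the map is injective and $\mathrm{rk}_2(\mathcal{L}(K_n)) = n-1$. When $n$ is even, $\mathbbm{1} \in C$, so the kernel has dimension $1$ and $\mathrm{rk}_2(\mathcal{L}(K_n)) = (n-1) - 1 = n-2$. Either way we recover the claimed upper bound. There is no real obstacle here beyond keeping the $\mathbb{F}_2$ parity bookkeeping straight; the work is essentially in identifying the right spanning set (the stars) and noticing that the column space is the even-weight slice of $W$.
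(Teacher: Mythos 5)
Your proof is correct, and it takes a genuinely different route from the paper's. The paper fixes the explicit spanning set $B=\{\{i,i+1\}:i\in[n-1]\}$ of ``consecutive'' edges, verifies by a telescoping computation that every column $\mathbbm{1}_{N(\{i,j\})}$ is the sum of the consecutive columns between $i$ and $j$, and then, for even $n$, exhibits one extra linear dependency (a perfect matching of columns summing to zero) to drop the bound to $n-2$. You instead factor every column as $s_i+s_j$ through the star vectors and reduce the whole question to linear algebra on the coefficient space $\mathbb{F}_2^n$: the column space is the image of the even-weight subspace $C$ under $c\mapsto\sum_i c_i s_i$, whose kernel on all of $\mathbb{F}_2^n$ is $\{0,\mathbbm{1}\}$ by connectivity of $K_n$, so the parity dichotomy is exactly the question of whether $\mathbbm{1}\in C$. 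Your version buys two things: it gives the rank \emph{exactly} (not just the upper bound the claim asserts, which is consistent with the paper's later observation that $\mathcal{L}(K_6)$ achieves rank $4$), and it generalizes with no extra work to $\mathcal{L}(G)$ for an arbitrary connected graph $G$, since only connectivity and the ``each edge lies in two stars'' relation are used. The paper's version is more elementary and self-contained but is specific to the clique and to the upper-bound direction. One tiny presentational point: when you write that $f\sim e$ iff $f$ shares exactly one endpoint with $e$, it is worth saying explicitly that this is because distinct edges cannot share two endpoints, so that $s_i(f)+s_j(f)=|f\cap e|\bmod 2$ really does coincide with the adjacency indicator; you do gesture at this, and the step is sound.
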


\begin{proof}
	Denote $V\left(\mathcal{L}(K_n)\right) = \left\{\{i,j\} \in {[n] \choose 2}\right\}$ and let $B = \left\{\{i,i+1\} : i \in [n-1]\right\} \subset V\left(\mathcal{L}(K_n)\right)$. To begin, we claim that $B$ $\FF_2$-spans all other vertices of $\mathcal{L}(K_n)$, and thus $\rktwo\left(\mathcal{L}(K_n)\right) \le |B| = n-1$. Let $\{i,j\} \in {[n] \choose 2}$ such that $j - i > 1$. Then: \[\mathbbm{1}_{N\left(\{i,j\}\right)} \equiv \sum_{k=i}^{j-1} \mathbbm{1}_{N\left(\{k,k+1\}\right)} \pmod 2.\]
	Let $\{x,y\} \in {[n] \choose 2}$ such that $x < y$. By the definition of the line graph, we have:
	\[\left(\mathbbm{1}_{N\left(\{i,j\}\right)}\right)_{\{x,y\}} \equiv \mathbbm{1}\left\{ \{i,j\} \sim \{x,y\}\right\} \pmod2 \equiv |\{i,j\} \cap \{x,y\}| \pmod 2 .\]
	As for the sum $\sum_{k=i}^{j-1} \mathbbm{1}_{N\left(\{k,k+1\}\right)}$, observe that any vertex \textit{strictly} in the interval $\{i, i+1, \dots, j\}$ contributes twice, and any \textit{endpoint} contributes exactly once. Thus:
	\begin{equation*}
	    \begin{split}
	        \left(\sum_{k=i}^{j-1} \mathbbm{1}_{N\left(\{k,k+1\}\right)}\right)_{\{x,y\}} &\equiv 2 \cdot \mathbbm{1}\{ i < x < j \} +  2 \cdot \mathbbm{1}\{ i < y < j \} + \mathbbm{1}\{ x \in \{i,j\} \} + \mathbbm{1}\{ y \in \{i,j\} \} \pmod 2 \\
	        &\equiv \mathbbm{1}\{ x \in \{i,j\} \} + \mathbbm{1}\{ y \in \{i,j\} \} \pmod 2 \\
	        &\equiv |\{i,j\} \cap \{x,y\} | \pmod 2.
 	    \end{split}
	\end{equation*}
	
	Finally, we observe that when $n \equiv 0 \pmod 2$, the vectors of $B$ are linearly dependent and consequently $\dim(\Sp(B)) < |B|$ and $\rktwo\left(\mathcal{L}(K_n)\right) \le |B| - 1 = n-2$. In this case, we claim that:
	\[\sum_{i=1}^{\sfrac{n}{2}}  \mathbbm{1}_{N\left(\{2i,2i+1\}\right)} = \mbb{0}.\]
	Which holds since for any $\{x,y\} \in {[n] \choose 2}$ such that $y-x > 1$, $\{x,y\}$ is adjacent to \textit{exactly two} of the summands above -- the one intersecting with $x$ and the one intersecting with $y$ (these are not the same summand, since $y$ is not a successor of $x$). Otherwise, if $y-x = 1$, then no summands are adjacent to $\{x,y\}$. Therefore over $\FF_2$ the sum evaluates to zero, as required.
\end{proof}

\begin{figure}[h!]
	\centering
	\includegraphics[width=9cm]{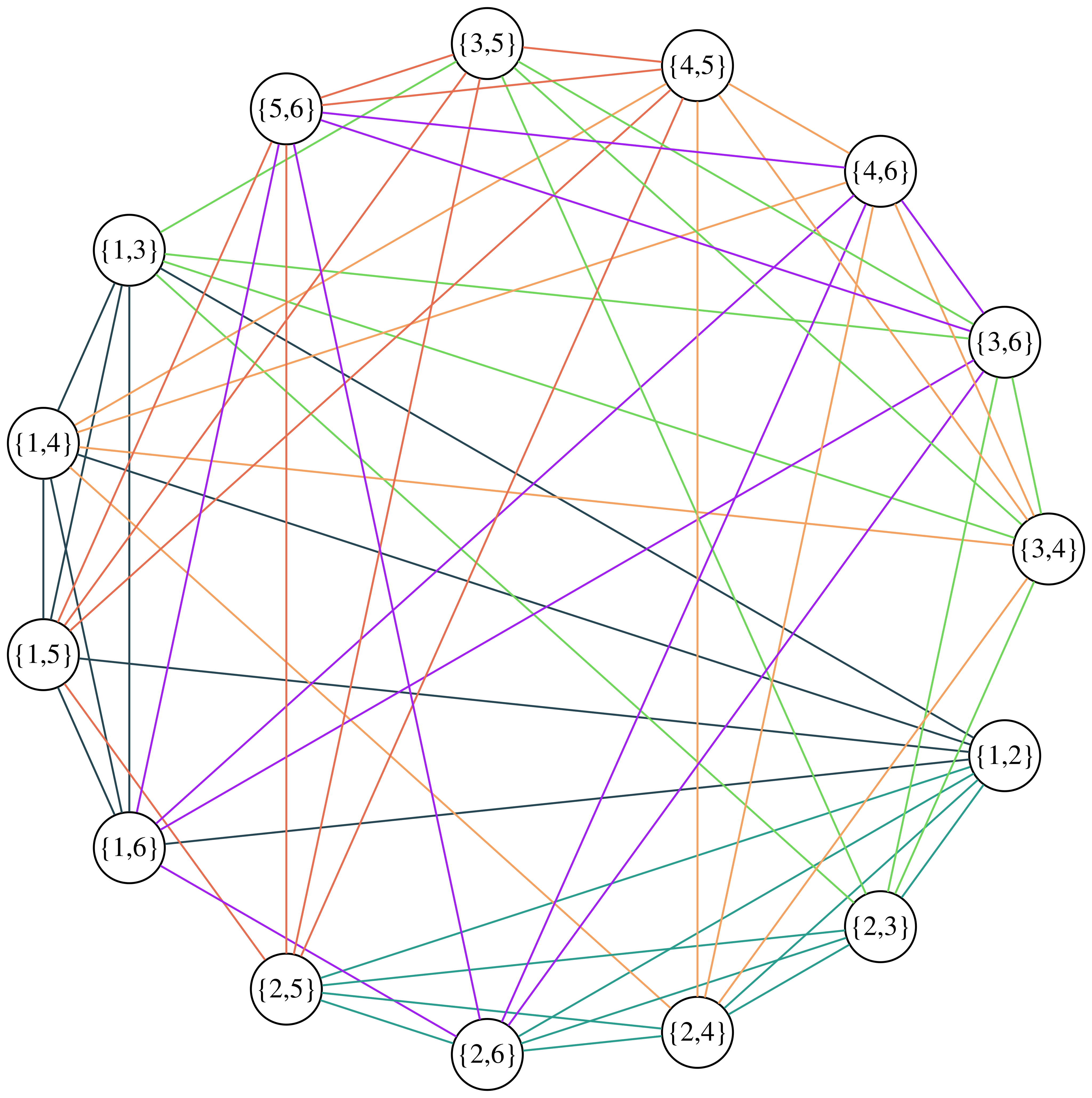}
	\caption{\centering The line-graph $\mathcal{L}\left({K_6}\right)$. Colours indicate the common vertex incident to two adjacent edges. Together with an isolated vertex, this is a twin-free graph of order $2^4$ and $\FF_2$-rank $4$, which is optimal.}
\end{figure}

Consequently, we observe that $\mathcal{L}(K_6)$ (or in other words, the Johnson graph $J(6,2)$) is a graph of order ${6 \choose 2} = 15 = 2^4 - 1$ and $\FF_2$-rank $4$. Taking this graph together with an isolated vertex does not increase the rank, and thus yields a tight construction of a twin-free graph of minimal $\FF_2$-rank, for $n=4$.

\subsection{The Construction for Even $n$}

Unlike the warm-up from the previous section, our proof of \autoref{thm:infinite_family_twinfree_lowf2rank} is based on a recursive construction. Given any two twin-free graphs $G$,$H$ of minimal $\FF_2$-rank (with respect to their order), we construct a new ``product'' graph, whose $\FF_2$ rank is \textit{sub-additive} in the ranks of $G$ and $H$, and whose order is the \textit{product} of their orders. One natural candidate for such an operator is the Kronecker product, which is defined as follows:

\begin{definition}(Kronecker Product of Graphs)
	Let $G$, $H$ be two graphs. The Kronecker Product $G \otimes H$ is defined:
	\[ V(G \otimes H) = V(G) \times V(H),\quad (u,v), (x,y) \in E(G \otimes H) \iff \left(u \underset{G}{\sim} x\right) \land \left(v \underset{H}{\sim} y\right). \]
\end{definition}

The Kronecker product can also be defined by acting on the adjacency matrices of $G$ and $H$. If $A_G \in M_n(\mathbb{R})$ and $A_H \in M_m(\mathbb{R})$ are the adjacency matrices of $G$  and $H$, respectively, where $v(G)=n$ and $v(H) = m$, then:

\[ A_{G \otimes H} = A_G \otimes A_H \eqdef \begin{bmatrix}
(a_G)_{1,1} \cdot A_H &  \dots & (a_G)_{1,n} \cdot A_H \\
\vdots & \ddots & \vdots \\
(a_G)_{n,1} \cdot A_H &  \dots & (a_G)_{n,n} \cdot A_H
\end{bmatrix} \in M_{mn}(\mathbb{R}) .\]

It is well known that for any two matrices $A$,$B$ over any field $\FF$, $\rk_{\FF}(A \otimes B) = \rk_{\FF}(A) \rk_{\FF}(B)$. However if $G$ and $H$ are two graphs of minimal $\FF_2$-rank, then to retain this property we must require that their product have rank that is only \textit{additive} in their respective ranks. Therefore, we introduce the ``Parity Product'', which is inspired by the Kronecker product, and indeed satisfies this condition (alongside other properties, allowing us to maintain the twin-free invariant).

\begin{definition}(Parity Product)
	Let $A\in M_n(\FF_2)$, $B \in M_m(\FF_2)$ be two matrices. Then:
	
	\[ A \boxplus B \eqdef \begin{bmatrix}
	(a_G)_{1,1} + A_H &  \dots & (a_G)_{1,n} + A_H \\
	\vdots & \ddots & \vdots \\
	(a_G)_{n,1} + A_H &  \dots & (a_G)_{n,n} + A_H
	\end{bmatrix} \in M_{mn}(\FF_2) .\]
	
	is the \textbf{Parity Product} of $A$ and $B$.
\end{definition}

The parity product is defined identically to the Kronecker product, if we replace the field multiplications with additions: if $\varphi$ is the group isomorphism $\left(\{0,1\}, +\right) \cong_{\varphi} \left(\{\pm 1\}, \times\right)$, then $A \boxplus B = \varphi^{-1} \left(\varphi(A) \otimes \varphi(B)\right)$, where the Kronecker product is taken over the reals. Given two graphs $G$ and $H$, we define their parity product $G \boxplus H$ to be the graph whose adjacency matrix is $A_G \boxplus A_H$. This is well defined since the product of two symmetric matrices is symmetric, and if the two matrices have zeros on their main diagonal, so does their product. The product $G \boxplus H$ can also be described as the graph operator, producing a graph over the vertices $V(G) \times V(H)$ in which $(a,x) \sim (b,y)$ if and only if \textit{exactly one} of $a \sim b$ and $x \sim y$ holds. 

To present the properties of the parity product of graphs, we require one last definition. We say that a graph $G$ is \textit{negation-free} if $\forall v \in V(G)$, there does not exist a vertex $u \in V(G)$ such that $N(v) = V(G) \setminus N(u)$. With these definitions in mind, we show the following useful lemma.

\begin{lemma}(Properties of the Parity Product)
	\label{lem:par_prod_properties}
	Let $G$ and $H$ be two graphs. Then:
	\begin{enumerate}[label=\alph*.]
		\item $\rktwo\left(G \boxplus H\right) \le \rktwo\left(G\right) + \rktwo\left(H\right)$
		\item If $G$ and $H$ are \textit{twin-free} and \textit{negation-free} then $G \boxplus H$ is also \textit{twin-free} and \textit{negation-free}.
	\end{enumerate}
\end{lemma}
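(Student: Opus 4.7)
My plan is to attack the two parts independently, each by reducing the combinatorial statement about the parity product to a clean algebraic identity on the underlying adjacency matrices.

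For part (a), the key observation is that the parity product decomposes additively as $A_G \boxplus A_H = (A_G \otimes J_m) + (J_n \otimes A_H)$ over $\FF_2$, where $J_k$ denotes the all-ones $k \times k$ matrix, $n = v(G)$, and $m = v(H)$. Indeed, at position $((i,k),(j,\ell))$, the left-hand side is $(A_G)_{i,j} + (A_H)_{k,\ell}$, while the first summand on the right contributes $(A_G)_{i,j}$ and the second contributes $(A_H)_{k,\ell}$. Since Kronecker rank is multiplicative over any field, $\rktwo(A_G \otimes J_m) = \rktwo(A_G) \cdot \rktwo(J_m) = \rktwo(G)$ and $\rktwo(J_n \otimes A_H) = \rktwo(H)$. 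Sub-additivity of rank under matrix addition then yields the desired bound.

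For part (b), both the twin-free and negation-free conditions reduce to analyzing when certain sums of adjacency entries are constant in one coordinate. Note first that the row of $A_G \boxplus A_H$ indexed by $(a,x)$ has $(c,z)$-entry equal to $(A_G)_{a,c} + (A_H)_{x,z}$. For twin-freeness, suppose $(a,x)$ and $(b,y)$ have identical neighbourhoods in $G \boxplus H$; then $(A_G)_{a,c} + (A_G)_{b,c} = (A_H)_{x,z} + (A_H)_{y,z}$ for all $c,z$, forcing both sides to be a common constant $\varepsilon \in \{0,1\}$. The case $\varepsilon = 0$ means $N_G(a) = N_G(b)$ and $N_H(x) = N_H(y)$, so twin-freeness of $G$ and $H$ forces $(a,x) = (b,y)$. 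The case $\varepsilon = 1$ forces rows $a,b$ of $A_G$ to be complements (and similarly for $x,y$); this requires $a \ne b$ (since a row cannot equal its own complement), contradicting negation-freeness of $G$.

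For negation-freeness of $G \boxplus H$, the analogous calculation asks when $(A_G)_{a,c} + (A_H)_{x,z} + (A_G)_{b,c} + (A_H)_{y,z} = 1$ for all $c,z$; again both coordinate-sums must be constants, summing to $1$. Each of the two cases (which constant is $0$ and which is $1$) forces one of the pairs to be twins in the corresponding factor and the other to be negations, and combining the twin-free and negation-free hypotheses on $G$ and $H$ rules out both cases exactly as above. The main potential pitfall I anticipate is correctly handling the degenerate cases where, e.g., $a = b$ or $x = y$, but these are immediately ruled out by the observation that no $\{0,1\}$-vector can equal its own $\FF_2$-complement; once that is in hand, the remainder is routine case analysis.
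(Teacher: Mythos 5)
Your proof is correct, and it takes a genuinely different route from the paper's on both parts. For part (a), the paper argues column-by-column: it fixes bases $B_G, B_H$ of the two column spaces, exhibits the explicit spanning set $S = (B_G \boxplus 0^m) \cup (0^n \boxplus B_H)$, and verifies via the identity $a \boxplus b = (u_1 \boxplus b) + (a \boxplus v_1) + (u_1 \boxplus v_1)$ that every column of $A_G \boxplus A_H$ lies in $\Sp(S)$. Your decomposition $A_G \boxplus A_H = (A_G \otimes J_m) + (J_n \otimes A_H)$ over $\FF_2$ is the ``global'' form of the same fact (note that $a \boxplus 0^m$ is exactly a column of $A_G \otimes J_m$), and combining Kronecker multiplicativity with $\rktwo(J)=1$ and subadditivity of rank under addition gives a shorter, cleaner derivation; the paper's version has the minor advantage of producing the explicit spanning set, which is not needed elsewhere. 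For part (b), the paper proceeds by locating concrete coordinates: using negation-freeness it finds indices $i_1, i_2$ where the $G$-columns agree and disagree respectively, and an index $j$ where the $H$-columns agree, so the two product columns agree at $(i_1,j)$ and differ at $(i_2,j)$, killing both the twin and negation possibilities simultaneously. Your separation-of-variables argument --- that $(A_G)_{a,c}+(A_G)_{b,c} = (A_H)_{x,z}+(A_H)_{y,z}$ for all $c,z$ forces both sides to be a common constant $\varepsilon$, with $\varepsilon=0$ handled by twin-freeness and $\varepsilon=1$ by negation-freeness (using that no $\{0,1\}$-vector equals its own complement) --- is equally valid and arguably makes the role of each hypothesis more transparent, at the cost of an explicit four-way case split when also verifying negation-freeness of the product, which the paper's single agree/disagree coordinate pair dispatches in one stroke.
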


\begin{proof}
	Let $A_G$ and $A_H$ be the adjacency matrices of $G$ and $H$, respectively, where:
	\begin{equation*}
		\begin{split}
			A_G = \begin{bmatrix}
				\vert & & \vert         \\
				a_{1} & \dots & a_{n}   \\
				\vert & & \vert
			\end{bmatrix} \in M_n(\FF_2)
		\end{split}
		\begin{split}
			\text{, and  }
		\end{split}
		\begin{split}
			A_H = \begin{bmatrix}
			\vert & & \vert         \\
			b_{1} & \dots & b_{m}   \\
			\vert & & \vert
			\end{bmatrix} \in M_m(\FF_2)
		\end{split}	.	
	\end{equation*}
	\underline{Proof of a.}: Denote $r_G = \rktwo(A_G)$, $r_H = \rktwo(A_H)$ and let:
	
	\[B_G = \{u_1, \dots, u_{r_G}\} \subseteq \{a_1, \dots, a_n\}, \quad B_H = \{v_1, \dots, v_{r_H}\} \subseteq \{b_1, \dots, b_m\}\]
	
	be bases for the column spaces of $A_G$ and $A_H$, respectively, and let $S \eqdef \left(B_G \boxplus 0^m\right) \cup \left(0^n \boxplus B_H\right)$. It suffices to show that $\Sp(S) \supseteq \colsp(A_G \boxplus A_H)$, since then $\rktwo(A_G \boxplus A_H) \le |S| = \rktwo(A_G) + \rktwo(A_H)$. Indeed, let $a \boxplus b$ be a column of $A_G \boxplus A_H$, where $a$,$b$ are columns of $A_G$ and $A_H$, and let us also write $a = \sum_{i \in [n]} \alpha_i u_i$ and $b = \sum_{j \in [m]} \beta_j v_j$. Then $\forall (i,j) \in [n] \times [m]$:
	
	\begin{equation*}	
	\begin{split}
		\Big((u_1 \boxplus b) + (a \boxplus v_1) + (u_1 \boxplus v_1)\Big)_{i,j} &= (u_1)_i + b_j + a_i + (v_1)_j + (u_1)_i + (v_1)_j \\
		&= a_i + b_j = (a \boxplus b)_{i,j}.
%				\Big(\underbrace{(u_1 \boxplus b)}_{\substack{= \sum_{i}{\beta_i (u_1 \boxplus v_i)} \\ = \sum_{i}{\beta_i \left((u_1 \boxplus 0^m) + (0^n \boxplus v_i)\right)} \\ \in \Sp(S)}} + \underbrace{(a \boxplus v_1)}_{\substack{= \sum_{i}{\alpha_i (u_i \boxplus v_1)} \\ \in \Sp(S)}} + \underbrace{(u_1 \boxplus v_1)}_{\substack{=\left(u_1 \boxplus 0^m\right) + \left(0^n \boxplus v_1\right) \\ \in \Sp(S)}}\ \ \Big)_{i,j} &= (u_1)_i + b_j + a_i + (v_1)_j + (u_1)_i + (v_1)_j \\
%		&= a_i + b_j = (a \boxplus b)_{i,j}
	\end{split}
	\end{equation*}
	Furthermore:
	
	\begin{equation*}
	    \begin{split}
	        u_1 \boxplus b &= \sum_{i}{\beta_i (u_1 \boxplus v_i)} = \sum_{i}{\beta_i \left((u_1 \boxplus 0^m) + (0^n \boxplus v_i)\right)} \in \Sp(S), \\
	        a \boxplus v_1 &= \sum_{i}{\alpha_i (u_i \boxplus v_1)} = \sum_{i}{\alpha_i \left((u_i \boxplus 0^m) + (0^n \boxplus v_1)\right)} \in \Sp(S), \\
	        u_1 \boxplus v_1 &= \left(u_1 \boxplus 0^m\right) + \left(0^n \boxplus v_1\right) \in \Sp(S). 
	    \end{split}
	\end{equation*}
	
	Thus $\Sp(S) \supseteq \colsp(A_G \boxplus A_H)$, as required. \\
	
	\noindent \underline{Proof of b.}: Let $a \boxplus b$, $u \boxplus v$ be two columns of $A_G \boxplus A_H$, where $a,u$ are columns of $A_G$ and $b,v$ are columns of $A_H$. Assume without loss of generality that $a \ne u$. Since $G$ is negation-free, there exist $i_1,i_2 \in [n]$ such that $a_{i_1} = u_{i_1}$ and $a_{i_2} \ne u_{i_2}$. Observe that there must exist some index $j \in [m]$ such that $b_j = v_j$. This is since either $b=v$, or if $b \ne v$, then by the negation-free property of $H$, the two vectors must agree on some coordinate. For such an index $j$, we therefore obtain:
	\begin{equation*}
		\begin{split}
		(a \boxplus b)_{i_1,j} &= a_{i_1} + b_j = u_{i_1} + v_j = (u \boxplus v)_{i_1, j}, \\
		(a \boxplus b)_{i_2,j} &= a_{i_2} + b_j \ne u_{i_2} + v_j = (u \boxplus v)_{i_2, j},
		\end{split}
	\end{equation*}
	and so $\left(a \boxplus b\right)$ and $\left(u \boxplus v\right)$ are neither negations of one another, nor are they twins, as they differ in at-least one coordinate, and agree on at-least one coordinate.
\end{proof}

The proof of \autoref{thm:infinite_family_twinfree_lowf2rank} now follows.

\familylowrank* 
\begin{proof}
	Let $G_2$ be the union of the cycle $C_3$ and an isolated vertex. $G_2$ is a \textit{twin-free} and \textit{negation-free} graph of order $2^n$ and $\FF_2$-rank $n$, for $n=2$. Consider the following family of graphs:
	
	\[ \mathcal{G} \eqdef \{G_2^{\boxplus m}\}_{m \in \mathbb{N}}, \quad \text{where } G_2^{\boxplus m} \eqdef \underbrace{G_2 \boxplus \dots \boxplus G_2}_{m \text{ times}} .\]
	
	Since $G_2$ is both negation-free \textit{and} twin-free, then from \autoref{lem:par_prod_properties} it follows that any $G_2^{\boxplus m}$ is similarly \textit{twin-free} (and negation-free). From the sub-additivity of the $\FF_2$-rank, we immediately obtain that $\rktwo(G_2^{\boxplus m}) \le \rktwo(G_2) + \dots \rktwo(G_2) = 2m$. Finally, since $v\left(G_2^{\boxplus m}\right) = 2^{2m}$, we can in fact deduce that equality holds (i.e., $\rktwo(G_2^{\boxplus m}) = 2m$), as otherwise the graph would not be twin-free.
\end{proof}

% \paragraph{Uniqueness?} Prior to proving \autoref{thm:infinite_family_twinfree_lowf2rank}, we had also relied on computer-aided search to locate twin-free graphs of minimal $\FF_2$-rank, for small values of $n$. Curiously, for these $n$'s at which the computer search concluded, $n \in \{2,4,6,8\}$, the resulting graphs had all agreed with our constructions above. Namely, for $n=4$ the resulting graph was the line-graph of $K_6$ (with an isolated vertex), which is isomorphic to $G_2^{\boxplus 2}$. For $n=6$ the resulting graph was $G_2^{\boxplus 3}$, and similarly the graph for $n=8$ was $G_2^{\boxplus 4}$. In light of these results, one might naturally conjecture that our $\FF_2$-rank minimal graphs for these values of $n$ (or indeed, perhaps \textit{all} even $n$) may be unique. However, we do not have a proof of this claim (other than for $n=2$).

\subsection{Properties of The Family $\big\{G_2^{\boxplus n}\big\}_n$}
\label{subsection:G_n_family_properties}

Let us briefly explore some interesting properties exhibited by our construction. In what follows, let $\mathcal{G} = \big\{G^n = G_2^{\boxplus n}\big\}_n$ and let $A_{n} \in M_{N}(\{0,1\})$ be the adjacency matrix of $G^n$, where $N=2^{2n}$. Moreover, for any $\{0,1\}$-matrix $A$ we denote its $\{ \pm 1 \}$-\textit{signed} representation by $\varphi(A)$, where $\varphi$ is the isomorphism $\left(\{0,1\}, +\right) \cong_{\varphi} \left(\{\pm 1\}, \times\right)$. 

%Under this notation, we begin with a simple observation: the sign matrices $\left\{\varphi(A_n)\right\}_n$ are \textbf{symmetric Hadamard matrices}, i.e., their rows (and respectively, columns) are all mutually orthogonal.

\begin{claim}
The matrices $\cbk{\varphi(A_n)}_{n\in\NN}$ are symmetric Hadamard matrices (see \autoref{def:hadamard_matrices}).
\end{claim}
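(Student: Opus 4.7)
The plan is to reduce the claim to a base case together with a routine Kronecker-product argument. The key observation, already noted in the excerpt just before \autoref{lem:par_prod_properties}, is that the group isomorphism $\varphi \colon (\{0,1\},+) \to (\{\pm 1\},\times)$ intertwines the parity product with the ordinary Kronecker product over $\mathbb{R}$: for any $\FF_2$-matrices $A,B$ and any indices,
\[
    \varphi(A \boxplus B)_{(i,k),(j,\ell)} = \varphi\bigl(A_{ij} + B_{k\ell}\bigr) = \varphi(A)_{ij}\,\varphi(B)_{k\ell} = \bigl(\varphi(A) \otimes \varphi(B)\bigr)_{(i,k),(j,\ell)}.
\]
Since the parity product is associative (both bracketings of $A \boxplus B \boxplus C$ give the entry $A_{ii'}+B_{jj'}+C_{kk'}$), iterating this identity yields $\varphi(A_n) = \varphi(A_1)^{\otimes n}$, where $A_1$ is the $4 \times 4$ adjacency matrix of $G_2 = C_3 \cup \{\text{isolated vertex}\}$.

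Next I would verify the base case directly: a one-line computation shows that each pair of rows of $\varphi(A_1)$ agrees on exactly two coordinates and disagrees on the other two, which is equivalent to orthogonality for $\pm 1$ vectors of length $4$. Hence $\varphi(A_1)$ is a symmetric Hadamard matrix of order $4$. I would then invoke the standard Kronecker identities $(X \otimes Y)^\top = X^\top \otimes Y^\top$ and $(X \otimes Y)(U \otimes V) = (XU) \otimes (YV)$, which immediately imply that the Kronecker product of two symmetric Hadamard matrices is again a symmetric Hadamard matrix: symmetry is preserved entrywise, and
\[
    (X \otimes Y)(X \otimes Y)^\top = XX^\top \otimes YY^\top = (n I_n) \otimes (m I_m) = nm\, I_{nm},
\]
whenever $XX^\top = n I_n$ and $YY^\top = m I_m$. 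A straightforward induction on $n$ then completes the proof, and it is worth noting that as a byproduct the diagonal of every $\varphi(A_n)$ is constantly $1$, since $A_n$ has an all-zero diagonal, matching the ``constant diagonal'' property mentioned in the introduction.

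There is no real technical obstacle here; the content is essentially the translation $\varphi(A \boxplus B) = \varphi(A) \otimes \varphi(B)$, which the paper has already set up. The only items I would actually need to write down are the $4 \times 4$ base-case verification and the two-line induction step — everything else follows formally from the multiplicative and transpose identities of the Kronecker product.
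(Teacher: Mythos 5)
Your proof is correct and takes essentially the same route as the paper: verify directly that $\varphi(A_1)$ is a symmetric Hadamard matrix of order $4$, use the identity $\varphi(A \boxplus B) = \varphi(A) \otimes \varphi(B)$ to write $\varphi(A_n) = \varphi(A_1)^{\otimes n}$, and conclude by the closure of symmetry and the Hadamard property under the Kronecker product. The only difference is that you spell out the base-case orthogonality check and the Kronecker identities $(X \otimes Y)^\top = X^\top \otimes Y^\top$ and $(X\otimes Y)(U \otimes V) = XU \otimes YV$, which the paper states without proof.
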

\begin{proof}
    Note that 
    \[
    \varphi(A_1) = \begin{bmatrix}
    1 & -1 & -1 & 1\\
    -1 & 1 & -1 & 1\\
    -1 & -1 & 1 & 1\\
    1 & 1 & 1 & 1
    \end{bmatrix}
    \]
    is clearly a symmetric Hadamard matrix, and since $\varphi(A_n) = \varphi(A_1) \otimes \ldots\otimes \varphi(A_1)$ (and both symmetry and Hadamard-ness are closed under the Kronecker product), the claim follows.
\end{proof}

In addition to the Hadamard property, the matrices $A_n$ also satisfy the following \textit{regularity} conditions.

\begin{claim}
\label{claim:a_n_balanced_rows}
    Any non-zero row $x$ of $A_n$ is \textit{balanced}. That is, $|x^{-1}(0)| = |x^{-1}(1)| = \frac{N}{2}$.
\end{claim}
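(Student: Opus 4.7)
The plan is to exploit the symmetric Hadamard structure of $\varphi(A_n)$ established in the preceding claim. The key observation is that $\varphi(A_n)$ possesses an all-ones row, whereupon Hadamard orthogonality against this row forces every other row to be balanced between $+1$ and $-1$, which is precisely the claim after translating back through $\varphi$.

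First I would verify that $\varphi(A_1)$ contains the all-ones row. This is immediate from the displayed matrix in the previous proof: the fourth row of $\varphi(A_1)$ is $(1,1,1,1)$, corresponding to the isolated vertex of $G_2 = C_3 \sqcup \{\ast\}$, whose row in $A_1$ is the zero row. Since the Kronecker product of all-ones vectors is again an all-ones vector, the identity $\varphi(A_n) = \varphi(A_1)^{\otimes n}$ implies that $\varphi(A_n)$ likewise contains an all-ones row. Equivalently, $A_n$ has a zero row.

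Now let $x$ be a non-zero row of $A_n$ and set $k = |x^{-1}(1)|$, so that the corresponding row $\varphi(x)$ of $\varphi(A_n)$ has $k$ entries equal to $-1$ and $N-k$ entries equal to $+1$. Since $x$ is non-zero, $\varphi(x)$ is distinct from the all-ones row, and the Hadamard property gives
\[
0 = \langle \varphi(x), \mathbf{1} \rangle = (N-k) - k = N - 2k,
\]
so $k = N/2$, yielding $|x^{-1}(0)| = |x^{-1}(1)| = N/2$ as required.

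I do not anticipate any real obstacle here: the claim is essentially a formal consequence of the Hadamard property combined with the presence of the all-ones row, both of which are guaranteed by the Kronecker factorization $\varphi(A_n) = \varphi(A_1)^{\otimes n}$ already exploited in the previous claim.
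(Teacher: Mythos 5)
Your proof is correct, but it takes a genuinely different route from the paper. The paper proves the claim by induction on $n$: it decomposes $A_n = A_1 \boxplus A_{n-1}$, writes a nonzero row as $x = y \boxplus z$ with at least one of $y,z$ nonzero and hence balanced by the induction hypothesis, and concludes using the fact that the parity product of a balanced vector with an arbitrary vector is balanced. You instead derive balancedness as a formal consequence of the previously established Hadamard property of $\varphi(A_n)$ together with the presence of the all-ones row (the image under $\varphi$ of the zero row of $A_n$, i.e.\ the isolated vertex): orthogonality of a row $\varphi(x)$ against $\mathbf{1}$ gives $N-2|x^{-1}(1)|=0$. Both arguments are sound; the one subtlety in yours --- that $\varphi(x)$ and the all-ones row sit at \emph{distinct indices} so that $HH^\top = NI_N$ actually applies --- is handled correctly, since $x\neq \mathbf{0}$ forces the two rows to be different. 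Your argument is shorter and more conceptual, and it extends naturally to the subsequent claim on pairwise intersections (two distinct nonzero rows being orthogonal after signing, combined with both being balanced, yields the four quarter-sized intersections). The paper's inductive block decomposition is more self-contained and deliberately sets up the case analysis reused verbatim in that next claim, which is presumably why the authors chose it.
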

\begin{proof}
    We proceed by induction on $n$. For $n=1$ this can be verified by hand. For $n>1$, let $A_n = A_1 \boxplus A_{n-1}$ and let $\mbb{0} \ne x \in \rows(A_n)$. By construction, $x = y \boxplus z$ where $y \in \rows(A_1)$ and $z \in \rows(A_{n-1})$. Since $x \ne \mbb{0}$, one of $y,z$ is not $\mbb{0}$, and by the induction hypothesis it is a balanced vector. The claim now follows, since the parity product of a balanced vector with any another vector is balanced.
\end{proof}
\begin{claim}
\label{claim:balanced_pairwise_intersections}
    Let $x\neq y$ be two non-zero rows of $A_n$. Then, $\forall\: b,b'\in \{0,1\}$ we have $\abs{x^{-1}(b)\cap y^{-1}(b')} = \frac{N}{4}$.
\end{claim}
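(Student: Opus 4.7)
The plan is to reduce the claim to the preceding balanced-row claim, via the observation that the row space of $A_n$ coincides with the set of rows of $A_n$. Since $G^n$ has order $N = 2^{2n}$ and $\mathbb{F}_2$-rank $2n$, the row space of $A_n$ is a subspace of $\mathbb{F}_2^N$ of dimension $2n$, which contains exactly $2^{2n} = N$ vectors. But $A_n$ has $N$ rows, and by twin-freeness these rows are pairwise distinct. Hence the rows of $A_n$ are \emph{precisely} the vectors of its row space (including the all-zero vector, which corresponds to the unique isolated vertex).

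With this identification in hand, the next step is to observe that if $x \ne y$ are any two non-zero rows of $A_n$, then $x + y$ is again an element of the row space, and therefore a row of $A_n$; moreover, $x + y \ne \mathbb{0}$ since $x \ne y$. Applying the previous Claim to each of the three non-zero rows $x$, $y$, and $x+y$, we conclude that all three are balanced: each has exactly $N/2$ zero coordinates and $N/2$ one coordinates.

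For the final step, I would set
\[
a_{b,b'} \eqdef \abs{x^{-1}(b) \cap y^{-1}(b')}, \qquad b,b' \in \{0,1\},
\]
and translate the three balance conditions into linear constraints on the four unknowns $a_{0,0}, a_{0,1}, a_{1,0}, a_{1,1}$. Using $x$-balance one has $a_{0,0}+a_{0,1}=a_{1,0}+a_{1,1}=N/2$; using $y$-balance, $a_{0,0}+a_{1,0}=a_{0,1}+a_{1,1}=N/2$; and since $(x+y)_i = 0$ iff $x_i = y_i$, the balance of $x+y$ gives $a_{0,0}+a_{1,1}=a_{0,1}+a_{1,0}=N/2$. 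These equations force $a_{b,b'}=N/4$ for all $b,b' \in \{0,1\}$, as required.

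There is no real obstacle here: the only slightly subtle point is the row-space-equals-rows identification, but that follows immediately from the dimension count together with twin-freeness, both of which are already established for the family $\{G^n\}$.
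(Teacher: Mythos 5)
Your proof is correct, but it takes a genuinely different route from the paper's. The paper proves this claim by induction on $n$, writing $x = u \boxplus \alpha$ and $y = v \boxplus \beta$ with $u,v \in \rows(A_{n-1})$ and $\alpha,\beta \in \rows(A_1)$, and running a three-way case analysis on whether $u$ and $v$ vanish, using the quarter-block structure of the parity product in each case. You instead exploit the global linear structure: since $A_n$ has $N = 2^{2n}$ pairwise-distinct rows lying in a row space of dimension $2n$, the rows are exactly the vectors of that subspace, so $x+y$ is again a non-zero row; the balancedness of $x$, $y$, and $x+y$ (from the preceding claim) then yields three pairs of linear constraints on the four quantities $a_{b,b'}$, which together with $\sum_{b,b'} a_{b,b'} = N$ force $a_{b,b'} = N/4$. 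Both arguments are sound and rest on the same prerequisite (the balanced-rows claim). Yours is shorter, avoids a second induction, and isolates the real reason the claim holds -- it works for \emph{any} symmetric zero-diagonal matrix whose distinct rows form a subspace with all non-zero vectors balanced, not just for the parity-product family -- whereas the paper's inductive proof stays self-contained within the recursive structure of $G_2^{\boxplus n}$ and does not need the rows-equal-row-space identification (which the paper only formalizes later, as \autoref{prop:rk_vs_rows}).
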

\begin{proof}
    We proceed once again by induction on $n$. For $n=1$ this can be verified. Let $n > 1$ and let $x,y \in \rows(A_n) \setminus \{ \mbb{0} \}$. By construction, $x = u \boxplus \alpha$ and $y = v \boxplus \beta$, where $u,v \in \rows(A_{n-1})$ and $\alpha,\beta \in \rows(A_1)$. In what follows, let us denote $\mbb{1} = 1^{\sfrac{N}{4}}$ and $\mbb{0} = 0^{\sfrac{N}{4}}$.
    %For convenience, we denote $(0,1,1,0)_k$ the vector $(0,1,1,0)$ in which each value appears $k$ times consecutively, for instance $(0,1,1,0)_3 = (0,0,0,1,1,1,1,1,1,0,0,0)$
    \begin{enumerate}
        \item if $u = v = \mbb{0}$, then $x,y\in \cbk{(\mbb{0},\mbb{1},\mbb{1},\mbb{0}),(\mbb{0},\mbb{1},\mbb{0},\mbb{1}),(\mbb{0},\mbb{0},\mbb{1},\mbb{1})}$,
        which satisfy the claim.
        \item If $u=\mbb{0}, v \ne \mbb{0}$, then \begin{align*}
             & x\in \cbk{(\mbb{0},\mbb{1},\mbb{1},\mbb{0}),(\mbb{0},\mbb{1},\mbb{0},\mbb{1}),(\mbb{0},\mbb{0},\mbb{1},\mbb{1})}, \\
             & y\in \cbk{(v,v,v,v),(v,v+\mbb{1},v+\mbb{1},v),(v,v+\mbb{1},v,v+\mbb{1}),(v,v,v+\mbb{1},v+\mbb{1})}.
        \end{align*}
        By \autoref{claim:a_n_balanced_rows}, $v$ is balanced (and so is $v+\mbb{1}$). Thus, $y$ restricted to the indices of either $x^{-1}(0)$ or of $x^{-1}(1)$ contains exactly two copies of balanced vectors and the claim holds. 
        \item If $u,v\ne \mbb{0}$, then
        \begin{align*}
             & x\in \cbk{(u,u,u,u),(u,u+\mbb{1},u+\mbb{1},u),(u,u+\mbb{1},u,u+\mbb{1}),(u,u,u+\mbb{1},u+\mbb{1})},\\
             & y\in \cbk{(v,v,v,v),(v,v+\mbb{1},v+\mbb{1},v),(v,v+\mbb{1},v,v+\mbb{1}),(v,v,v+\mbb{1},v+\mbb{1})}.
        \end{align*}
        By the induction hypothesis, the claim holds in every quarter of $x,y$, and so it holds for the entire vectors as well. $\qedhere$
    \end{enumerate}
\end{proof}

The aforementioned properties imply several corollaries, applying both directly to $\mathcal{G}$, and to the closely related family of graphs $\mathcal{H} = \{ H^n = G^n \setminus \{\mbb{0}\} : G^n \in \mathcal{G} \}$ formed by removing the unique isolated vertex $\mbb{0}$ from every $G^n$. For instance, they allow us to precisely compute the spectrum of $G^n$, for any $n$. Moreover, they directly imply that $\mathcal{H}$ is a family of strongly regular quasi-random graphs (see \autoref{def:strongly_regular_graph} and \autoref{thm:quasi_random_graphs}). We provide short proofs of these corollaries below.

\begin{corollary}
    For any $G^n \in \mathcal{G}$ we have:
    \[ \spec(G^n) = \left\{ 0^{(1)},{\tfrac{N}{2}}^{(1)}, {\tfrac{\sqrt{N}}{2}}^{(a)}, -{\tfrac{\sqrt{N}}{2}}^{(b)} \right\} \text{, where } a = \tfrac{N + \sqrt{N}}{2} - 1 , b = \tfrac{N - \sqrt{N}}{2} - 1 \text{.} \]
\end{corollary}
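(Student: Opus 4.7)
My plan is to reduce the computation of $\spec(G^n)$ to that of the closely related graph $H^n = G^n \setminus \{\mbb{0}\}$, and then to invoke the standard spectral theory of strongly regular graphs. Since the row and column of $A_n$ indexed by the unique isolated vertex $\mbb{0}$ vanish identically, the indicator vector $e_{\mbb{0}}$ is a $0$-eigenvector of $A_n$ that is orthogonal to every eigenvector supported on $V(H^n)$, so $\spec(G^n) = \spec(H^n) \cup \{0\}$.

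The first step is to verify that $H^n$ is strongly regular with intersection array $[N-1,\ N/2,\ N/4,\ N/4]$. The degree condition $k = N/2$ follows immediately from \autoref{claim:a_n_balanced_rows}, which states that every non-zero row of $A_n$ is balanced and therefore has exactly $N/2$ ones; since the row of $\mbb{0}$ contributes $0$ to every other row's support, removing $\mbb{0}$ affects neither degrees nor pairwise intersection sizes. The intersection parameters $\lambda = \mu = N/4$ then drop out of \autoref{claim:balanced_pairwise_intersections}, applied with $b = b' = 1$ to any two distinct non-zero rows $x, y$: the value $|N(x) \cap N(y)| = |x^{-1}(1) \cap y^{-1}(1)| = N/4$ holds regardless of whether $x$ and $y$ are adjacent.

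The second step is a standard linear-algebraic calculation. The eigenvalues of an SRG with parameters $[v, k, \lambda, \mu]$ are the principal eigenvalue $k$ (with multiplicity $1$) together with the two roots of $x^2 - (\lambda - \mu)x - (k - \mu) = 0$, whose multiplicities $m_{\pm}$ are pinned down by $m_+ + m_- = v - 1$ and the trace constraint $k + m_+ \theta_+ + m_- \theta_- = 0$. Substituting $\lambda - \mu = 0$ and $k - \mu = N/4$ immediately yields $\theta_{\pm} = \pm \sqrt{N}/2$, and solving the two linear constraints produces multiplicities of the stated form. Adjoining the extra $0$-eigenvalue contributed by $\mbb{0}$ then recovers the full spectrum of $G^n$.

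I do not foresee any substantial obstacle: the combinatorial content is already fully packaged in \autoref{claim:a_n_balanced_rows} and \autoref{claim:balanced_pairwise_intersections}, and the remainder is a textbook SRG spectrum computation. The only point requiring a hint of care is the transfer from intersection counts in $V(G^n)$ to intersection counts in $V(H^n)$, but this is immediate from the fact that the row and column of $\mbb{0}$ in $A_n$ vanish, so the isolated coordinate never lies in the support of any other row.
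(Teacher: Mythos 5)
Your proof is correct but takes a genuinely different route from the paper's. The paper works directly with the signed matrix $\varphi(A_n)$: since it is a symmetric Hadamard matrix with $\varphi(A_n)\varphi(A_n)^\top = NI_N$ and trace $N$, its eigenvalues are $\pm\sqrt{N}$ with multiplicities $\tfrac{N\pm\sqrt{N}}{2}$, and then the identity $2A_n = J - \varphi(A_n)$ together with Cauchy interlacing for rank-one updates transfers this to $A_n$, with the two ``new'' eigenvalues pinned down as $0$ and $\tfrac{N}{2}$ by the regularity of the two connected components. You instead peel off the isolated vertex, establish the strongly-regular parameters $[N-1,\tfrac{N}{2},\tfrac{N}{4},\tfrac{N}{4}]$ from \autoref{claim:a_n_balanced_rows} and \autoref{claim:balanced_pairwise_intersections} (this is exactly the content of \autoref{cor:H_n_SRG}, which in the paper is derived \emph{after} the spectrum but depends only on those two claims, so there is no circularity), and then invoke the textbook SRG eigenvalue formula. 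Your route avoids the interlacing theorem entirely and is arguably more standard; the paper's route has the advantage of making the Hadamard structure do the work and of not needing the SRG machinery.

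One substantive caveat: you assert that solving the two linear constraints ``produces multiplicities of the stated form,'' but if you actually carry out the computation you get $m_{+} - m_{-} = -\sqrt{N}$ from the trace condition $\tfrac{N}{2} + (m_{+}-m_{-})\tfrac{\sqrt{N}}{2} = 0$, hence $m_{+} = \tfrac{N-\sqrt{N}}{2}-1 = b$ and $m_{-} = \tfrac{N+\sqrt{N}}{2}-1 = a$ --- i.e.\ the multiplicities attach to the \emph{opposite} signs from those printed in the corollary. (A quick sanity check with $n=1$, where $G^1 = C_3 \cup \{\text{pt}\}$ has spectrum $\{2, 0, -1, -1\}$, confirms that $-\tfrac{\sqrt{N}}{2}$ is the eigenvalue with the larger multiplicity $a$; the statement as printed also fails the trace test, since $\tfrac{N}{2} + (a-b)\tfrac{\sqrt{N}}{2} = N \neq 0$.) This is a typo in the paper rather than an error in your method --- the paper's own interlacing argument, followed carefully, assigns $-\tfrac{\sqrt{N}}{2}$ the multiplicity $\tfrac{N+\sqrt{N}}{2}-1$ as well --- but you should not claim agreement with the stated form without noting the swap.
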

\begin{proof}
Let us begin by analysing the spectrum of $\varphi(A_n)$. Since $\varphi(A_n)$ is a real symmetric matrix with $\varphi(A_n)\varphi(A_n)^\top = NI_{N}$, its eigenvalues are all precisely $\pm\sqrt{N}$. Furthermore, since $A_n$ is an adjacency matrix, the trace of $\varphi(A_n)$ (which is also the sum of its eigenvalues) is $N$. Thus:
\[ \spec(\varphi(A_n)) = \left\{ \sqrt{N} \text{ with multiplicity } \frac{(N+\sqrt{N})}{2},\ -\sqrt{N} \text{ with multiplicity } \frac{(N-\sqrt{N})}{2}\right\} .\]
Now, observe that $2A_n = J -\varphi(A_n)$, where $J$ is the all-ones matrix. Thus, up to scaling, $A_n$ differs additively from $\varphi(A_n)$ by the rank-1 matrix $J$. The following variant of Cauchy's Interlacing Theorem allows us to relate the spectra of two such matrices.
\begin{theorem*} (Cauchy Interlacing for Rank-$1$ Updates, see \cite{marcus2014ramanujan})
\label{thm:cauchy_rank_1} Let $A \in M_n(\mathbb{R})$ be a symmetric matrix, and let $v \in \mathbb{R}^n$ be a vector. Then the eigenvalues of $A$ \textit{interlace} the eigenvalues of $A + vv^\top$.
\end{theorem*}
Consequently, $\spec(A_n)$ consists of the original eigenvalues $\pm\tfrac{1}{2}\sqrt{N}$ (with one fewer multiplicity), and two additional eigenvalues $\lambda_1, \lambda_2$, where $\lambda_1 \in [-\frac{1}{2}\sqrt{N},  \tfrac{1}{2}\sqrt{N}]$ and $\lambda_2 \ge \tfrac{1}{2}\sqrt{N}$. Finally, we recall that $A_n$ is the adjacency matrix of a graph with two regular connected components: an isolated vertex, and an $\tfrac{N}{2}$-regular component. Thus, $\lambda_1 = 0$ and $\lambda_2 = \tfrac{N}{2}$, thereby concluding the proof.
\end{proof}

\begin{corollary}
\label{cor:H_n_SRG}
$H^n = G^n \setminus \{0\}$ is strongly regular with intersection array $\sbk{N-1, \frac{N}{2},\frac{N}{4},\frac{N}{4}}$.
\end{corollary}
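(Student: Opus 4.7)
The plan is to derive \autoref{cor:H_n_SRG} as a direct corollary of \autoref{claim:a_n_balanced_rows} and \autoref{claim:balanced_pairwise_intersections}. Since $G^n$ has a unique isolated vertex (corresponding to the all-zero row $\mbb{0}$ of $A_n$), the graph $H^n$ has exactly $N-1$ vertices, so the first entry of the intersection array is immediate. What remains is to verify regularity and the two intersection values.

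For regularity, note that for any vertex $x\in V(H^n)$, its row in $A_n$ is a non-zero row, and so by \autoref{claim:a_n_balanced_rows} its support has size $\sfrac{N}{2}$. Since the removed vertex $\mbb{0}$ was isolated in $G^n$, it was not a neighbour of $x$ to begin with, and thus $\deg_{H^n}(x) = \deg_{G^n}(x) = \abs{x^{-1}(1)} = \sfrac{N}{2}$. This yields $k=\sfrac{N}{2}$.

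For $\lambda$ and $\mu$, consider two distinct vertices $x\neq y$ in $H^n$. Using the row-interpretation of $A_n$ (which is valid since $A_n$ is symmetric), the common neighbourhood $N_{H^n}(x)\cap N_{H^n}(y)$ corresponds exactly to the set of coordinates on which both rows $x$ and $y$ take the value $1$, i.e.\ $x^{-1}(1)\cap y^{-1}(1)$. Again, removing the isolated vertex $\mbb{0}$ does not affect these intersections, so by \autoref{claim:balanced_pairwise_intersections} (applied with $b=b'=1$), we conclude
\[
\abs{N_{H^n}(x)\cap N_{H^n}(y)} = \abs{x^{-1}(1)\cap y^{-1}(1)} = \frac{N}{4},
\]
\emph{regardless} of whether $x\sim y$ or $x\nsim y$. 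Hence $\lambda=\mu=\sfrac{N}{4}$, completing the verification of the intersection array $[N-1,\sfrac{N}{2},\sfrac{N}{4},\sfrac{N}{4}]$.

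There is no real obstacle: the content of the corollary has already been packaged into the two preceding claims, and what is needed is essentially a translation between the combinatorial language of neighbourhoods and the linear-algebraic language of row supports of $A_n$. The only subtlety to flag is justifying that removing the isolated vertex preserves both the degree and the pairwise common-neighbour count of every remaining vertex, which is trivially true because an isolated vertex is not counted in either quantity.
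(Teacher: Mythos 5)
Your proof is correct and follows exactly the paper's argument: it deduces the degree condition from \autoref{claim:a_n_balanced_rows}, the common-neighbour counts from \autoref{claim:balanced_pairwise_intersections}, and observes that deleting the isolated vertex affects neither quantity. The only difference is that you spell out the translation between neighbourhoods and row supports, which the paper leaves implicit.
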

\begin{proof}
    Removing the isolated vertex of $G^n$ does not modify neither the degrees nor co-degrees of any other vertex. Thus, from \autoref{claim:a_n_balanced_rows} we obtain the regularity condition, and from \autoref{claim:balanced_pairwise_intersections} we also obtain the condition on the co-degrees. 
    %Finally, we remark that $H^n$ has diameter $2$, since any two vertices have $N/4 > 0$ common neighbours.
\end{proof}
\begin{corollary}
\label{cor:H_n_quasi_random}
$\mathcal{H}$ is a quasi-random graph family.
\end{corollary}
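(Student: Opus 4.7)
The plan is to apply the Chung--Graham--Wilson equivalences of \autoref{thm:quasi_random_graphs}, verifying the spectral condition (item 4), which follows immediately from the already-computed $\spec(G^n)$. Writing $N=2^{2n}$, we have $v(H^n)=N-1$, and by \autoref{cor:H_n_SRG} each $H^n$ is $\tfrac{N}{2}$-regular, so its edge density $p=\tfrac{N/2}{N-1}$ is bounded away from $0$ (in fact $p\to\tfrac{1}{2}$ as $n\to\infty$), meeting the hypothesis $p(n)>0$ of \autoref{thm:quasi_random_graphs}.

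The key observation is that $G^n$ decomposes as the disjoint union of $H^n$ with an isolated vertex, so $A_{G^n}$ is block-diagonal with blocks $A_{H^n}$ and $[0]$; consequently one obtains $\spec(H^n)$ from $\spec(G^n)$ by deleting a single $0$ eigenvalue, giving
\[
\spec(H^n)\;=\;\cbk{\tfrac{N}{2}^{(1)},\;\tfrac{\sqrt{N}}{2}^{(a)},\;-\tfrac{\sqrt{N}}{2}^{(b)}}.
\]
Hence $\lambda_1=\tfrac{N}{2}$ and $p\cdot v(H^n)=\tfrac{N}{2}$, so $|\lambda_1 - p\cdot v(H^n)|=0=o(v(H^n))$, while simultaneously $\max\{|\lambda_2|,|\lambda_{v(H^n)}|\}=\tfrac{\sqrt{N}}{2}=o(v(H^n))$. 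Both halves of condition (4) of \autoref{thm:quasi_random_graphs} are therefore satisfied, yielding the claim.

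An equally short alternative sidesteps the spectral corollary and verifies condition (3) directly from \autoref{claim:balanced_pairwise_intersections}: every off-diagonal pair contributes $\big||N(x)\cap N(y)|-p^2 v(H^n)\big|=\big|\tfrac{N}{4}-\tfrac{N^2}{4(N-1)}\big|=O(1)$, while the $v(H^n)$ diagonal terms each contribute $O(N)$, giving a total of $O(N^2)=o(v(H^n)^3)$. There is no real obstacle in either route: the substantive content is already in \autoref{claim:a_n_balanced_rows}, \autoref{claim:balanced_pairwise_intersections}, and the preceding corollary on $\spec(G^n)$, and the task reduces to matching those bounds against the quantitative thresholds of \autoref{thm:quasi_random_graphs}.
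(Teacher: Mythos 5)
Your proof is correct, but your primary route is genuinely different from the paper's. The paper verifies condition (3) of \autoref{thm:quasi_random_graphs} directly: by \autoref{cor:H_n_SRG} every pair of distinct vertices has exactly $\tfrac{N}{4}$ common neighbours, so the co-degree deviation sum is $O(N^2) = o(N^3)$ and quasi-randomness follows in one line. You instead verify the spectral condition (4), observing that $A_{G^n}$ is block-diagonal with blocks $A_{H^n}$ and $[0]$, so $\spec(H^n)$ is obtained from the previously computed $\spec(G^n)$ by deleting one zero eigenvalue; then $\lambda_1 = \tfrac{N}{2}$ matches $p\cdot v(H^n)$ up to $O(1)$ and all other eigenvalues have modulus $\tfrac{\sqrt{N}}{2} = o(N)$. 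Both arguments are sound and both lean on material already established in Section \ref{subsection:G_n_family_properties}; the spectral route showcases the eigenvalue computation and makes the ``second eigenvalue'' pseudorandomness explicit, while the co-degree route is the more self-contained of the two (it needs only \autoref{claim:balanced_pairwise_intersections}, not the interlacing argument behind the spectrum corollary). Your alternative paragraph essentially reproduces the paper's proof, and is in fact slightly more careful than the paper in two respects: you account for the diagonal terms $x=y$ in the sum of condition (3), which the paper silently omits, and you correctly identify the edge density as $p \to \tfrac12$ (so that the comparison value is $p^2 v(H^n) \approx \tfrac{N}{4}$), whereas the paper's closing sentence misstates the density as $p = \tfrac14$.
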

\begin{proof}
    By \autoref{cor:H_n_SRG}, we have that $\forall\: x,y \in V(H^n): \abs{N(x)\cap N(y)} = \frac{N}{4}$. Thus: 
    \[
    \sum_{x,y} \abs{\abs{N(x)\cap N(y)} - \frac{N-1}{4}} = \frac{1}{4} {N \choose 2} = o(N^2).
    \]
    where $p = \tfrac{1}{4}$ is the edge density of $H^n$, and the claim now follows from property (3.) of \autoref{thm:quasi_random_graphs}.
\end{proof}

%It is worth mentioning that while constructions of symmetric Hadamard matrices are known, our matrices are not a permutation of the well known Walsh matrices\Anote{Reference? Definition?}, as $tr(W_{n}) = 0$, while $tr(\varphi(A_n)) = 2^{n}$ as their diagonal contains only $1$. 

\section{Nonexistence of Extremal Graphs For Odd $n$}

\autoref{thm:infinite_family_twinfree_lowf2rank} provides a construction of $\FF_2$-rank minimal twin-free graphs, for all even $n$. It is therefore natural to ask: what about odd $n$? For $n=1$, clearly we have either two isolated vertices (which are twins) or a single edge (which is of rank 2), so no such graphs exist. Curiously, for $n=3$ we can directly show that, similarly, no such graph exists.

\begin{claim}
    \label{clm:no_twinfree_n_3}
	There is no twin-free graph of order $2^3=8$ and $\FF_2$-rank $3$.
\end{claim}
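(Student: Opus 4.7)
The plan is to use the twin-free and $\FF_2$-rank $3$ conditions to force the adjacency matrix into a rigid algebraic form, which then leads to an impossibility via a direct $3 \times 3$ determinant computation --- essentially an elementary instance of the general phenomenon that symmetric zero-diagonal matrices over $\FF_2$ have even rank.

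I would begin by observing that since $\dim \colsp(A_G) = 3$, the column space $V \le \FF_2^{8}$ has exactly $8$ elements. Twin-freeness forces the $8$ rows of $A_G$ to be distinct, so they coincide (as a set) with $V$, with exactly one row equal to $\mbb{0}$ (say row $i_0$). Letting $S \subseteq [8]$ be the set of coordinates on which $V$ is not identically zero, any $j \notin S$ gives a zero $j$-th column in $A_G$, hence (by symmetry) a zero $j$-th row, forcing $j = i_0$. Thus $|S| = 7$; WLOG $i_0 = 8$ and $S = [7]$.

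I would then restrict to the $7 \times 7$ submatrix on $[7] \times [7]$ and argue that $V|_{[7]}$ must be (a coordinate relabeling of) the simplex code. Writing $V|_{[7]} = \mathrm{Im}(B)$ for some $7 \times 3$ rank-$3$ matrix $B$ over $\FF_2$, every row of $B$ is non-zero (since $S = [7]$) and pairwise distinct (two equal rows of $B$ would yield two equal columns of $A_G$, hence two equal rows of $A_G$ by symmetry, contradicting twin-freeness). The seven rows of $B$ therefore enumerate all seven non-zero vectors of $\FF_2^3$. Identifying coordinates $[7]$ with $\FF_2^3 \setminus \{\mbb{0}\}$ via the rows of $B$, and $V$ with $\FF_2^3$ via $B$, the problem reduces to exhibiting a permutation $\pi$ of $\FF_2^3 \setminus \{\mbb{0}\}$ satisfying $\pi(x) \cdot y = \pi(y) \cdot x$ (from symmetry of $A_G$) and $\pi(x) \cdot x = 0$ (from the zero diagonal).

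Setting $\pi(\mbb{0}) = \mbb{0}$, the identity $\pi(y)_i = \pi(y) \cdot e_i = \pi(e_i) \cdot y$ (which is linear in $y$) shows that $\pi$ is a linear map, $\pi(x) = Mx$ for some $3 \times 3$ matrix $M$ over $\FF_2$. The symmetry condition forces $M$ to be symmetric, and the zero-diagonal condition (applied at $x = e_i$) forces $M_{ii} = 0$. Writing $M = \left(\begin{smallmatrix} 0 & a & b \\ a & 0 & c \\ b & c & 0 \end{smallmatrix}\right)$ and expanding, $\det M = 2abc = 0$ in $\FF_2$, so $M$ is singular. This contradicts the fact that $\pi$ is a bijection, completing the proof.

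The main obstacle will be the reduction step: cleanly showing that twin-freeness together with symmetry forces $V|_{[7]}$ to be the simplex code, so that coordinates of $V$ can be canonically labeled by the non-zero vectors of $\FF_2^3$ and the problem becomes purely algebraic. Once this reduction is in place, the final contradiction is a short determinant calculation that already foreshadows the even-rank phenomenon driving the general odd-$n$ argument.
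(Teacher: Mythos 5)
Your proof is correct, and it takes a genuinely different route from the paper's. The paper fixes a coset ordering of the rows, uses the linear relations among rows together with symmetry to fill in the entire $8\times 8$ matrix in terms of three free bits $x,y,z$, and then verifies by case analysis that every assignment creates twins. You instead identify the seven non-trivial coordinates with $\FF_2^3\setminus\{\mbb{0}\}$ via the distinct non-zero rows of $B$, translate symmetry and the zero diagonal into the conditions $\pi(x)\cdot y=\pi(y)\cdot x$ and $\pi(x)\cdot x=0$ on the row-indexing permutation $\pi$, deduce that $\pi$ is given by a symmetric zero-diagonal (i.e.\ alternating) matrix $M\in M_3(\FF_2)$, and conclude from $\det M=2abc=0$ that $\pi$ cannot be a bijection. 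All of your reduction steps check out: the rows coincide with $V$ as a set, $|S|=7$, the rows of $B$ are non-zero and distinct hence enumerate $\FF_2^3\setminus\{\mbb{0}\}$, and the linearity of $\pi$ follows correctly from $\pi(y)_i=\pi(e_i)\cdot y$. What your approach buys is the complete elimination of case analysis, and, more significantly, it generalizes verbatim: for arbitrary $n$ the same reduction produces an invertible alternating $n\times n$ matrix over $\FF_2$, which cannot exist for odd $n$ since alternating forms have even rank, so you have in effect rediscovered the algebraic (Godsil--Royle) proof of \autoref{thm:no_odd_n} that the paper cites in its introduction and deliberately avoids in favour of a combinatorial argument. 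What the paper's brute-force check buys is that it stays entirely elementary and introduces the coset/block notation that is reused in the proof of the general theorem.
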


\begin{proof}
	Assume such a graph $G$ exists. $G$ has $8$ vertices, of which no two are twins, and  $\rktwo\left(A_G\right) = 3$. This implies that the rows of $A_G$ correspond to all linear combinations of some three linearly independent vectors over $\FF_2$. Denote the first three rows $v_1, v_2, v_3$ and assume w.l.o.g (by rearranging $A_G$'s rows and corresponding columns) the forth row is $v_1+v_2$, the fifth is $v_1+v_3$, the sixth $v_2+v_3$, the seventh $v_1+v_2+v_3$ and the last is all zeros. 
$$ A_G = \begin{bmatrix}
0 & x & y &  &  &  &  &  \\
 & 0 & z &  &  &  &  &  \\
 &  & 0 &  &  &  &  &  \\
 &  &  & 0 &  &  &  &  \\
 &  &  &  & 0 &  &  &  \\
 &  &  &  &  & 0 &  &  \\
 &  &  &  &  &  & 0 &  \\
0 & 0 & 0 & 0 & 0 & 0 & 0 & 0 
\end{bmatrix}  .$$

Denote by $x,y,z$ the values in the cells marked above, then we can fill in the matrix in a single way using the relation between the rows and the fact that $A_G$ is symmetric,

$$ A_G = \begin{bmatrix}
0 & x & y & x & y & x+y & x+y & 0 \\
x & 0 & z & x & x+z & z & x+z & 0 \\
y & z & 0 & y+z & y & z & y+z & 0 \\
x & x & y+z & 0 & x +y+z & x +y+z & y+z & 0 \\
y & x+z & y & x +y+z & 0 & x +y+z & x+z & 0 \\
x+y & z & z & x +y+z & x +y+z & 0 & x+y & 0 \\
x+y & x+z & y+z & y+z & x+z & x+y & 0 & 0 \\
0 & 0 & 0 & 0 & 0 & 0 & 0 & 0 
\end{bmatrix}  .$$

A simple case analysis shows that all assignments for $x,y,z$ result in a graph that is not twin-free.
\end{proof}

In fact, no such graphs exist for any odd $n$.

\nooddn*

\begin{proof}
    Assume, for sake of contradiction, that there exists a graph twin-free graph of order $2^n$ and rank $n$ over $\FF_2$, for \textit{odd} $n > 3$. Let $A\in M_{2^n}(\FF_2)$ be the adjacency matrix of this graph. Let us begin with the following simple observation:
    
\begin{proposition}
    \label{prop:rk_vs_rows}
    Let $A \in M_{m \times n}(\FF_2)$ be a matrix without duplicate rows, and let $r = \rktwo(A)$. Then $m \le 2^r$, and moreover, if $m = 2^r$ then there exist  linearly independent vectors $\{v_1, \dots, v_r\} \subseteq \FF_2^n$ such that:
    \[
        \rows(A) = \Big\{ \sum_{i=1}^r \alpha_i v_i \ :\  \alpha_1,\dots,\alpha_r \in \{0,1\} \Big\}
    \]
\end{proposition}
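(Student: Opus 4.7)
The statement is a basic fact about $\FF_2$-matrices whose rows are distinct, so the plan is short. The key observation is that every row of $A$ belongs to the row space $\rowsp(A) \le \FF_2^n$, which, having dimension $r$ over a field of size $2$, has cardinality exactly $2^r$. Since the rows of $A$ are pairwise distinct, they form a subset of a set of size $2^r$, giving $m \le 2^r$ immediately.

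For the equality case, I would argue that if $m = 2^r$ then the $m$ distinct rows of $A$ must fill out the entire row space (any $2^r$-element subset of a $2^r$-element set is the whole set). At that point I would choose any basis $\{v_1,\dots,v_r\}$ of $\rowsp(A)$; by definition of a basis over $\FF_2$, every element of $\rowsp(A)$ admits a unique representation as $\sum_{i=1}^r \alpha_i v_i$ with $\alpha_i \in \{0,1\}$. Combining this with the fact that $\rows(A) = \rowsp(A)$ yields the desired description of $\rows(A)$.

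There is no real obstacle here — the proposition is essentially a counting argument plus the definition of basis. The only subtlety worth flagging is that the basis $\{v_1,\dots,v_r\}$ need not consist of rows of $A$ itself; it is just any $\FF_2$-basis of the row space, which is guaranteed to exist because $\rowsp(A)$ is a vector space of dimension $r$. I would write the proof as two short paragraphs: the first for the inequality via the cardinality of $\rowsp(A)$, the second for the equality case via a basis choice.
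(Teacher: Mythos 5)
Your proof is correct and is exactly the standard counting argument the paper has in mind (the paper states \autoref{prop:rk_vs_rows} as a ``simple observation'' without writing out a proof): distinct rows all lie in the row space, which has exactly $2^r$ elements over $\FF_2$, and in the equality case the rows exhaust the row space, so any basis gives the claimed description. No gaps; nothing further needed.
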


    Thus, the rows of $A$ form all possible linear combinations over $\FF_2$ of $n$ linearly independent vectors. Hereafter, by conjugating $A$ using the corresponding permutation matrix, let us assume that the rows (and respectively, columns) of $A$ are ordered as follows: $(\mbb{0}, v_1, v_2, v_2+v_1, v_3, v_3+v_1, \dots)$. In other words, we start with the zero subspace, and for every block of rows that follows, we take the coset formed by adding the next basis vector $v_i$ to the rows that precede it, in order. In particular, let us denote the top half of $A$'s rows by $V \le \FF_2^{2^{n-1}}$, where $\dim(V)=n-1$. Then, the bottom half are formed by $V + (u, \hat{u})$, for some pair of vectors $u,\hat{u}\in \FF_2^{2^{n-1}}$. Using the symmetry of $A$, we obtain: 
    
\begin{figure}[H]
    \centering
    \input{drawings/no_odd_drawings/TikzSettings}

\scalebox{.5}{

    \begin{tikzpicture}[scale=1, thick]

    \draw[step=4cm,black,ultra thick, -] (0,0) grid (8,8);
    \node (1) at (2,6) {{\scalebox{1.8}{${B}$}}};
    \node (1) at (6,6) {{\scalebox{1.8}{${B + U^\top}$}}};
    \node (1) at (2,2) {{\scalebox{1.8}{${B + U}$}}};
    \node (1) at (6,2) {{\scalebox{1.8}{${B + U^\top + \hat{U}}$}}};
    
    \node (01) [Transparent Orange] at (1.8,7.7) {{\scalebox{1.2}{$0\ 0\ 0\ \dots$}}};
    \node (02) [Transparent Orange] at (.3,6.6) {{\scalebox{1.2}{$0$}}};
    \node (03) [Transparent Orange] at (.3,6.15) {{\scalebox{1.2}{$0$}}};
    \node (04) [Transparent Orange] at (.3,5.7) {{\scalebox{1.2}{$0$}}};
    \node (05) [Transparent Orange] at (.3,5.25) {{\scalebox{1.2}{$\vdots$}}};

% \node [gray] at (1,7.5) {{\scalebox{1.5}{$0$}}};
% \node [gray] at (1.7,7.5) {{\scalebox{1.5}{$\cdots$}}};
% \node (3) at (2,10) {{ {$C + W$}}};
% \node (2) at (6,14) {{ {$C + W^\top$}}};
% \node (4) at (6,10) {{ {$C + W + W^\top$}}};
% \node (9) at (2,6) {{  $C+ X$}};
% \node (5) at (10,14) {{  ${ C + X^\top}$}};
% \node (7) at (10,10) {{  $C + X^\top + S$}};
% \node (15) at (10,2) {{  $C + X^\top + S$}};
% \node (14) at (14,6) {{  $C + X^\top + S^\top$}};
% \node (16) at (14,2) {{  $\scriptstyle C + X^\top + S^\top + T$}};

% \node (B) [red] at (-1.5,12) {\scalebox{3}{${B}$}};

    \draw [Transparent Orange, ->] (01) edge (.25,7.7) edge (7.75,7.7)
                     (02) edge (.3,7.5)
                     (05) edge (.3,.25);
%         (9) edge  (5)
%         (5) edge [bend right] (7)
%         (7) edge [bend right] (15)
%         (15) edge  (14)
        % (14) edge [bend left] (16);
        
% \draw[line width = 2mm, opacity = .4, red] (0,16) -- (8,16) -- (8,8) -- (0,8) -- cycle;
    
    \node [blue, opacity = .6] (V) at (-1,6) {\scalebox{1.8}{$V$}};
    \node [blue, opacity = .6] (V) at (-2,2) {\scalebox{1.8}{$V + (u,\hat{u})$}};
    
    \node [gray, opacity = .6] (V) at (2,-1) {\scalebox{1.8}{$2^{n-1}$}};
    \node [gray, opacity = .6] (V) at (6,-1) {\scalebox{1.8}{$2^{n-1}$}};
    \node [gray, opacity = .6] (V) at (9.5,2) {\scalebox{1.8}{$2^{n-1}$}};
    \node [gray, opacity = .6] (V) at (9.5,6) {\scalebox{1.8}{$2^{n-1}$}};

    \draw [decorate, decoration = {brace}, line width = 1.5pt, blue, opacity = .6] (-.3,4.1) --  (-.3,8);
    \draw [decorate, decoration = {brace}, line width = 1.5pt, blue,  opacity = .6]  (-.3,0) -- (-.3,3.9);
    
    \draw [decorate, decoration = {brace}, line width = 1.5pt, gray, opacity = .6]  (8.3,8) -- (8.3,4.1);
    \draw [decorate, decoration = {brace}, line width = 1.5pt, gray,  opacity = .6]   (8.3,3.9)  -- (8.3,0);
    \draw [decorate, decoration = {brace}, line width = 1.5pt, gray,  opacity = .6]   (3.9,-.3) -- (0,-.3) ;
    \draw [decorate, decoration = {brace}, line width = 1.5pt, gray,  opacity = .6]  (8,-.3) -- (4.1,-.3) ;
        
    \end{tikzpicture}    
}
    \begin{center}\vspace{-0.2in}where \quad\quad\quad\quad\quad\quad\quad\quad\quad\quad\quad\quad\quad\quad\quad\quad\quad\quad\quad \end{center}
    \[
        U = \begin{bmatrix}
        \longdash & u      & \longdash       \\
                  & \vdots &                 \\
        \longdash & u      & \longdash       \\
        \end{bmatrix}, \hat{U} = \begin{bmatrix}
        \longdash & \hat{u}   & \longdash    \\
                  & \vdots    &              \\
        \longdash & \hat{u}   & \longdash    \\
        \end{bmatrix}.
    \]
%\begin{minipage}{.70\textwidth}
%\begin{flushright}
%    
%\end{flushright}
%    \end{minipage}$\qquad$\begin{minipage}{.25\textwidth}
%        \hspace{-0.2in} \vspace{0.07in}
%    \end{minipage}    
    \caption{The matrix $A$ and its submatrices.}
    \label{fig:my_label}
\end{figure}
    \noindent \vspace{-0.35in}
    \begin{claim}
        $u = \hat{u}$.
    \end{claim}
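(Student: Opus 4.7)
The plan is to exploit the zero-diagonal property of $A$ (inherited from the fact that $G$ is a simple graph), combined with the explicit block decomposition just established. The key observation is that the bottom-right $2^{n-1}\times 2^{n-1}$ block of $A$ sits along $A$'s main diagonal, so the diagonal entries of that block must vanish; I will compute these entries directly in terms of $u$ and $\hat{u}$, at which point $u = \hat{u}$ will fall out.

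More concretely, first I would note that $B$, being the top-left block of $A$, inherits the zero-diagonal condition: $B_{i,i} = A_{i,i} = 0$ for all $i \in [2^{n-1}]$. Next, by the definition of $U$ and $\hat{U}$ as matrices whose rows are constantly $u$ and $\hat{u}$, the $(i,j)$ entry of the bottom-right block $B + U^\top + \hat{U}$ equals $B_{i,j} + u[i] + \hat{u}[j]$. Specializing to $i = j$ yields $B_{i,i} + u[i] + \hat{u}[i] = u[i] + \hat{u}[i]$. Since this entry is also a diagonal entry of $A$ (at position $(2^{n-1}+i,\, 2^{n-1}+i)$), it must equal $0$, so $u[i] = \hat{u}[i]$ for every $i \in [2^{n-1}]$, i.e., $u = \hat{u}$.

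No serious obstacle is anticipated --- the argument is a one-line computation once the block form is in hand. In particular, it uses neither the twin-free hypothesis nor the parity of $n$, which is reassuring: those assumptions must be deployed later in the nonexistence argument, rather than in this preliminary structural claim that merely cleans up the block form of $A$. A complementary sanity check is that the alternative consistent with symmetry alone, namely $u + \hat{u} = \mathbbm{1}$ (i.e.\ $\hat{u} = \bar{u}$), would force self-loops on every vertex of the bottom half, which is exactly the scenario the zero-diagonal condition excludes.
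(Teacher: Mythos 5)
Your proof is correct and is essentially identical to the paper's: both compute the diagonal entry of the bottom-right block as $B_{i,i} + u_i + \hat{u}_i$ and invoke the zero-diagonal property of the adjacency matrix to conclude $u_i = \hat{u}_i$ for all $i$. The only cosmetic difference is that the paper phrases it as a proof by contradiction while you argue directly.
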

    \begin{proof}
        Assume otherwise, and let $i$ be an index such that $u_i \neq \hat{u}_i$. Then, for the bottom-right block,
        \[
            (B^\top + U^\top + \hat{U})_{i,i} = B_{i,i} + u_i + \hat{u}_i = 1,
        \]
    which is a contradiction, since $A$ has only zeros on its main diagonal (it is an adjacency matrix). $\qedhere$
    \end{proof}
    
    Next, we claim that $u$ is not contained the space spanned by the rows of $B$.
    
    \begin{lemma} \label{lem:u_notin_rowsp_b}
     $u \notin \rowsp(B)$.
    \end{lemma}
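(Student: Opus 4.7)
The plan is to assume for contradiction that $u \in \rowsp(B)$, and derive a contradiction by combining the block structure of $A$ with the zero-trace condition of an adjacency matrix.

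The first step is to extract a structural constraint on the subspace $W$ of top rows. Since the top half of $A$ equals $[B \mid B + U^\top]$, and row $i$ of $U^\top$ is the constant vector $u_i \cdot \mbb{1}$ (where $\mbb{1} \in \FF_2^{2^{n-1}}$ denotes the all-ones vector), the $i$-th top row of $A$ has the explicit form $\left(w_i^L,\; w_i^L + u_i \mbb{1}\right)$. Consequently, for every $w \in W$ sitting at position $i$, we have $w^R = w^L + u_i \mbb{1}$, so in particular $w^R - w^L \in \{0, \mbb{1}\}$ for every $w \in W$.

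Now suppose $u \in \rowsp(B)$. Because $\rowsp(B) = \{w^L : w \in W\}$, there exists $w \in W$ with $w^L = u$; let $i^*$ denote its row position. The structural observation above forces $w \in \{(u, u),\, (u, u + \mbb{1})\}$. If $w = (u, u)$, then $w = v_n$ and hence $v_n \in W$, contradicting the fact that $\rktwo(A) = n$ while $\dim(W) = n - 1$ (which requires $v_n \notin W$). Otherwise $w = (u, u + \mbb{1})$, and the bottom row at position $2^{n-1} + i^*$ is
\[
    v_n + w = (u + u,\; u + u + \mbb{1}) = (0, \mbb{1}),
\]
whose $(2^{n-1} + i^*)$-th coordinate is the $i^*$-th coordinate of $\mbb{1}$, namely $1$. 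This contradicts the zero-trace property of an adjacency matrix. Either way we reach a contradiction, so $u \notin \rowsp(B)$.

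The one conceptual step is the structural observation that every $w \in W$ satisfies $w^R - w^L \in \{0, \mbb{1}\}$; this is what makes the symmetric, zero-trace setup rigid enough to force the contradiction. Once this constraint is in hand, the remainder of the argument is a two-case zero-trace check and involves no further combinatorial ingenuity.
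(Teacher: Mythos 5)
Your proof is correct, and it takes a cleaner route than the paper at the one point where the paper's argument gets heavy. The paper splits into two cases according to whether $B$ is twin-free: when it is, it invokes \autoref{prop:rk_vs_rows} to get $\rowsp(B)=\rows(B)$, locates the row $(u,u)$ in the top half, and exhibits a second all-zeros row of $A$; when it is not, it runs a separate, more involved computation (writing $u$ as a sum of rows of $B$ and using a pair of twin rows of $B$ to manufacture the vector $(\mbb{0},\mbb{1})$) to show that every bottom row of $A$ lies in the span of the top rows, forcing $\rktwo(A)\le n-1$. Your observation that $\{w^L : w\in W\}$ is the image of the subspace $W$ of top rows under a coordinate projection --- hence itself a subspace, hence equal to $\rowsp(B)$ whether or not $B$ has repeated rows --- collapses the two cases into one and dispenses with \autoref{prop:rk_vs_rows} at this step. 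The endgames are essentially equivalent: where the paper produces a duplicate all-zeros row, you note that $(u,u)=v_n\in W$ already contradicts $\rktwo(A)=n$. One minor remark: your second case $w=(u,u+\mbb{1})$ is vacuous, since $u$ sits as row $i^*$ of $B$ and therefore $u_{i^*}=B_{i^*,i^*}=0$ already forces $w=(u,u)$; deriving a contradiction from its hypothesis anyway is harmless, but you could simply delete that case.
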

    \begin{proof}
    Assume, for sake of contradiction, that it were. First, we address the case where $B$ is twin free.  In this case, since $B$ is a $2^{n-1}\times 2^{n-1}$ matrix of rank $\rktwo(B) \leq \rktwo(B \mid B + U^\top) = n-1$,  we deduce by \autoref{prop:rk_vs_rows} that its row space is equal to the collection of its rows and hence $u\in \rows(B)$. Denote by $i_u$ the index of the row at which $u$ appears in $B$, corresponding to the row $(u, u + u_{i_u})$ of $A$, appearing at the same index. Thus, the $(2^{n-1} + i_u)$-th row of $A$ is $(u + u, u + u + u_{i_u}) = (\mbb{0}, \mbb{1}_{u_{i_u}})$. Note that $u_{i_u} = 0$ since it lies on the diagonal of $B$, and hence $A$ contains an additional all-zeros row, in contradiction to it being twin free.
    
    We are left with the case in which $B$ is not twin free. In this scenario, $B$ contains two identical rows $w$ at some indices $a,b$. These correspond to the rows $(w, w + u_a), (w, w + u_b)$ in $A$. Since $A$ is twin free, $u_a \neq u_b$. Using the assumption $u \in \rowsp(B)$, we may write $u = \sum_{i\in S} v_i$ where $S \subseteq [2^{n-1}]$ and $\{v_i\}_{i\in S} \subseteq \rows(B)$. Subsequently, we claim that any row in the bottom half of $A$ is contained in the span of its top half, implying $\rktwo(A) \leq n-1$. Let $(x + u, x + u + u_{i_x})$ be a row in the bottom half of $A$ at index $2^{n-1} + i_x$. Then,
    \begin{align*}
        (x + u, x + u + u_{i_x}) = 
        (x, x + u_{i_x}) + \underbrace{\sum_{i\in S}(v_i, v_i + u_i)}_{=(u, u + \sum_{i\in S}u_i)} +
        \sum_{i\in S} u_i \cdot \underbrace{\left[(w, w + u_a) + (w, w + u_b) \right]}_{=(0, \ldots, 0, 1, \ldots, 1)}.
    \end{align*}
    Note that all the above vectors are rows in the top half of $A$. Thus $\rktwo(A) \leq n-1$, contradiction. $\qedhere$
    \end{proof}
    
    \begin{lemma} \label{lem:rank_b_n_minus_2}
    $\rktwo(B) = n - 2$.
    \end{lemma}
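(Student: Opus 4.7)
The plan is to obtain both bounds on $\rktwo(B)$ simultaneously by relating $\rktwo(B)$ directly to the known rank of the top half of $A$, using Lemma \ref{lem:u_notin_rowsp_b} as the crucial input. The key observation is that the top half of $A$, namely the block row $(B \mid B + U^\top)$, has rank exactly $n-1$: its rows are precisely the vectors of the subspace $V \le \FF_2^{2^n}$, which has dimension $n-1$ by construction.

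First I would apply elementary column operations. Adding each of the first $2^{n-1}$ columns of $(B \mid B + U^\top)$ to the corresponding column on the right side transforms this matrix into $(B \mid U^\top)$, preserving its column space and hence its rank. Thus
\[
\rktwo(B \mid U^\top) \;=\; \rktwo(B \mid B + U^\top) \;=\; n-1.
\]
Next I would exploit the structure of $U^\top$: every column of $U^\top$ equals $u$, so $\colsp(U^\top) = \Sp(u)$, and therefore $\colsp(B \mid U^\top) = \colsp(B) + \Sp(u)$.

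Finally I would invoke Lemma \ref{lem:u_notin_rowsp_b} together with the symmetry of $B$. Since $B = B^\top$, we have $\colsp(B) = \rowsp(B)$, and by the lemma $u \notin \rowsp(B) = \colsp(B)$. Adjoining $u$ to $\colsp(B)$ therefore strictly increases the dimension by exactly one, giving
\[
n - 1 \;=\; \rktwo(B \mid U^\top) \;=\; \dim\bigl(\colsp(B) + \Sp(u)\bigr) \;=\; \rktwo(B) + 1,
\]
so $\rktwo(B) = n - 2$, as desired.

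There is no real obstacle here — all the work has already been done in \autoref{lem:u_notin_rowsp_b}; the present lemma is essentially a bookkeeping step that converts the non-containment $u \notin \rowsp(B)$ into a precise rank equality via the symmetry of $B$ and the rank of the top half of $A$.
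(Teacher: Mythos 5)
Your proof is correct, and it takes a genuinely different and more economical route than the paper's. The paper establishes the two inequalities separately: for $\rktwo(B)\ge n-2$ it uses a pigeonhole argument via \autoref{prop:rk_vs_rows} (if the rank were smaller, restricting $B$ to the rows indexed by $u^{-1}(0)$ would force duplicate rows, which propagate to duplicate rows of $A$), and for $\rktwo(B)\le n-2$ it assumes $\rktwo(B)=n-1$ and exhibits $n$ linearly independent rows inside $B+U$, contradicting $\rktwo(B\mid B+U^\top)=n-1$. You instead compress both directions into the single chain
\[
n-1=\rktwo(B\mid B+U^\top)=\rktwo(B\mid U^\top)=\dim\bigl(\colsp(B)+\Sp(u)\bigr)=\rktwo(B)+1,
\]
where the last equality uses $B=B^\top$ and \autoref{lem:u_notin_rowsp_b} to conclude $u\notin\colsp(B)$ (which in particular forces $u\ne\mbb{0}$, so adjoining $u$ raises the dimension by exactly one). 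Both arguments rest on the same two inputs --- the rank $n-1$ of the top half of $A$ and \autoref{lem:u_notin_rowsp_b} --- but yours dispenses with the case analysis and the auxiliary construction of $n$ independent rows; note also that the lower bound $\rktwo(B)\ge n-2$ already falls out of your column-operation step by subadditivity, $\rktwo(B\mid U^\top)\le\rktwo(B)+\rktwo(U^\top)\le\rktwo(B)+1$, without invoking the lemma at all. The one thing the paper's longer lower-bound argument exercises that yours does not is the twin-free structure of $A$, but since that is not reused downstream, your streamlined version loses nothing.
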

    \begin{proof} Consider the following inequalities. \\
    
    \underline{$\rktwo(B) \geq n - 2$:} Assuming the contrary; $\rktwo(B) < n-2$. Let $u^{-1}(0)$ be the coordinates at which $u$ has zeros, and $u^{-1}(1)$ be the coordinates at which $u$ has ones. Without loss of generality, $|u^{-1}(0)| \geq 2^{n-2}$ and so the matrix $B$ restricted to rows $u^{-1}(0)$ has $\geq 2 ^{n-2}$ rows and rank $< n-2$, thus by \autoref{prop:rk_vs_rows} this matrix is not twin free and has at least two identical rows. However, since at these indices $u$ has zeros, these correspond to two identical rows in $(B \mid B + U^\top)$ and thus to identical rows in $A$, contradiction. \\
    
    \underline{$\rktwo(B) \le n-2$:} Clearly $\rktwo(B) \leq \rktwo(B \mid B + U^\top) = n-1$. Assume towards a contradiction that $\rktwo(B) = n-1$. Then, from \autoref{lem:u_notin_rowsp_b} we know $u\notin \rowsp(B)$ and we will argue that this implies $\rktwo(B + U) > n-1$. Note that this, in turn, would imply a contradiction, as
    \[
        n-1 = \rktwo(B \mid  B + U^\top) \geq \rktwo(B + U^\top) = \rktwo(B + U) > n-1.
    \]
    where the first inequality follows since the restriction cannot increase the rank, and the second equality follows since $B$ is symmetric. To show $\rktwo(B + U) > n-1$, let $\left\{v_1, \ldots, v_{n-1}\right\} \subseteq \rows(B)$ be a basis for the rowspace of $B$. Then, we claim that $\left\{v_1 + u, \ldots, v_{n-1} + u, u\right\} \subseteq \rows(B + U)$ is a set of of $n$ linearly independent vectors. Indeed, for any $\emptyset \ne S \subseteq [n-1]$, we have 
    \[
        \sum_{i\in S}(v_i + u) = \sum_{i\in S}v_i + |S|\cdot u.
    \]
    Since $\sum_{i\in S}v_i$ is neither $\mbb{0}$ nor $u$, the above sum is similarly neither $\mbb{0}$ nor $u$ (since we are over $\FF_2$). Thus, adding $u$ to any of the above linear combinations similarly does not vanish, and so we have found $n$ linearly independent rows in $B + U$, which completes the proof of \autoref{lem:rank_b_n_minus_2}.$\qedhere$
    \end{proof}
    
    Since $B$ is a symmetric subspace matrix with zeros on the main diagonal, then, as we have deduced before for $A$, it can subdivided into four square blocks, in the following manner:
    \[
        B = \renewcommand{\arraystretch}{2}\left[\begin{array}{c|c}
    C  & C + W^\top \\
    \hline
    C + W & C + W + W^\top
    \end{array}\right], \text{ where } W = \renewcommand{\arraystretch}{1}\begin{bmatrix}
    \longdash & w      & \longdash \\
              & \vdots &  \\
    \longdash & w      & \longdash
    \end{bmatrix}.
    \]
    for some $w\in \FF_2^{2^{n-2}}$. Denote $u =(x,y)$ where $x,y\in \FF_2^{2^{n-2}}$ are the first and second half of the vector $u$ and  denote by $(s,t)$ the continuation of the row $w$ in the matrix $A$, as demonstrated in \autoref{fig:A_B_notation}.
    \begin{figure}[H]
        \centering
        \input{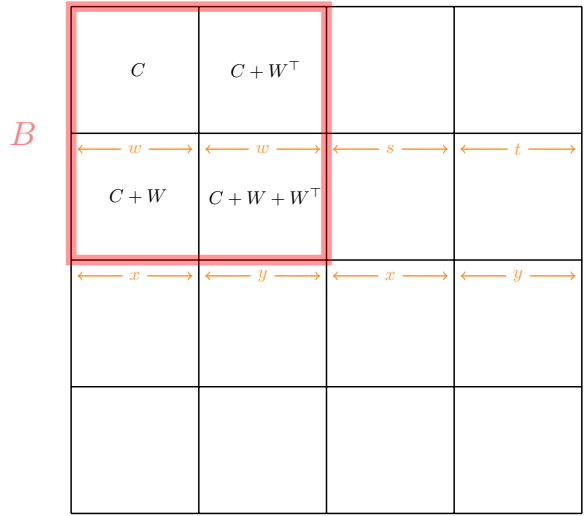}

\scalebox{.7}{

    \begin{tikzpicture}[scale=.6, thick]
% \draw[step=1cm,gray,very thin] (0,0) grid (16,16);
\draw[step=4cm,black,thick, -] (0,0) grid (16,16);
\node (1) at (2,14) {{ {$C$}}};
\node (3) at (2,10) {{ {$C + W$}}};
\node (2) at (6,14) {{ {$C + W^\top$}}};
\node (4) at (6,10) {{ {$C + W + W^\top$}}};
% \node (9) at (2,6) {{  $C+ X$}};
% \node (5) at (10,14) {{  ${ C + X^\top}$}};
% \node (7) at (10,10) {{  $C + X^\top + S$}};
% \node (15) at (10,2) {{  $C + X^\top + S$}};
% \node (14) at (14,6) {{  $C + X^\top + S^\top$}};
% \node (16) at (14,2) {{  $\scriptstyle C + X^\top + S^\top + T$}};

\node (w1) [Transparent Orange] at (2,11.5) {$w$};
\node (w2) [Transparent Orange] at (6,11.5) {$w$};

\node (s) [Transparent Orange] at (10,11.5) {$s$};
\node (t) [Transparent Orange] at (14,11.5) {$t$};

\node (x1) [Transparent Orange] at (2,7.5) {$x$};
\node (x2) [Transparent Orange] at (10,7.5) {$x$};

\node (y1) [Transparent Orange] at (6,7.5) {$y$};
\node (y2) [Transparent Orange] at (14,7.5) {$y$};

\node (B) [red, opacity = .5] at (-1.5,12) {\scalebox{1.8}{${B}$}};

% \draw [gray, ->] (1) edge[bend right] (9)
%         (9) edge  (5)
%         (5) edge [bend right] (7)
%         (7) edge [bend right] (15)
%         (15) edge  (14)
        % (14) edge [bend left] (16);
        
\draw[line width = 2mm, opacity = .4, red] (0,16) -- (8,16) -- (8,8) -- (0,8) -- cycle;
\draw[->, Transparent Orange]
    (w1) edge (.2,11.5) edge (3.8,11.5)
    (w2) edge (4.2,11.5) edge (7.8,11.5)
    (s) edge (8.2,11.5) edge (11.8,11.5)
    (t) edge (12.2,11.5) edge (15.8,11.5)
    (x1) edge (.2,7.5) edge (3.8,7.5)
    (y1) edge (4.2,7.5) edge (7.8,7.5)
    (x2) edge (8.2,7.5) edge (11.8,7.5)
    (y2) edge (12.2,7.5) edge (15.8,7.5);

%\draw [decorate,
%    decoration = {calligraphic brace}, line width = 1.5pt] (-.3,8) --  (-.3,16);
    
\end{tikzpicture}    
}
        \caption{The matrix $A$ and its submatrix $B$.}
        \label{fig:A_B_notation}
    \end{figure}
    Hereafter, by convention, let us denote by $X,Y,S,T$ the matrices whose rows are $x,y,s,t$ respectively.
    \begin{claim}
    s = t.
    \end{claim}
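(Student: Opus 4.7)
My plan is to derive $s=t$ from the symmetry of $A$, its zero-diagonal property, and the binary ordering of its rows. Set $r := 2^{n-2}+1$ and let $\rho := A_{r,\cdot}$ denote the $r$-th row of $A$; by the figure, $\rho$ decomposes into the four quarters $(w,w,s,t)$, and under the ordering $(\mathbf{0}, v_1, v_2, v_1+v_2, v_3, \ldots)$ fixed earlier in the proof we have $\rho = v_{n-1}$.

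First I would spell out the coset identities implied by this ordering. Writing out the binary expansion of each index, for every $j \in [1, 2^{n-2}]$,
\[
    A_{2^{n-1}+j,\,\cdot} \;=\; v_n + A_{j,\,\cdot}, \qquad A_{2^{n-1}+2^{n-2}+j,\,\cdot} \;=\; v_n + v_{n-1} + A_{j,\,\cdot} \;=\; v_n + \rho + A_{j,\,\cdot},
\]
where $v_n$ is simply the coset representative for the bottom half of $A$.

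Next I would invoke $A_{i,k}=A_{k,i}$ to read off $s_j$ and $t_j$ as entries in column $r$ rather than row $r$. Using the identities above,
\begin{align*}
    s_j &\;=\; A_{r,\,2^{n-1}+j} \;=\; A_{2^{n-1}+j,\,r} \;=\; (v_n)_r + A_{j,r},\\
    t_j &\;=\; A_{r,\,2^{n-1}+2^{n-2}+j} \;=\; A_{2^{n-1}+2^{n-2}+j,\,r} \;=\; (v_n)_r + \rho_r + A_{j,r}.
\end{align*}
Subtracting gives $t_j - s_j = \rho_r = A_{r,r} = 0$, since $A$ has a zero diagonal. Hence $s_j=t_j$ for every $j\in[1,2^{n-2}]$, i.e., $s=t$.

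The only delicate point is bookkeeping -- aligning the binary row-ordering with the block decomposition used to define $w$, $s$, $t$ so that the identifications $\rho = v_{n-1}$ and $\rho = (w,w,s,t)$ are simultaneously available. Once that is done, the claim reduces to the one-line consequence of the symmetry $A_{i,k}=A_{k,i}$ together with $A_{r,r}=0$ displayed above.
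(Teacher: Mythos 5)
Your proof is correct and rests on the same three ingredients as the paper's: the coset structure of the rows, the symmetry of $A$, and its zero diagonal. The only difference is one of execution — you reflect the entries of $s$ and $t$ into column $r=2^{n-2}+1$ and use the single diagonal entry $A_{r,r}=(v_{n-1})_r=0$, whereas the paper chases blocks to compute the bottom-right $2^{n-2}\times 2^{n-2}$ submatrix as $C+X+X^\top+S^\top+T$ and reads $s_i+t_i=0$ off its diagonal; both are the same argument at heart, and your bookkeeping (row $r$ being $v_{n-1}=(w,w,s,t)$) checks out.
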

    \begin{proof}
    Using the coset and symmetry constraints of the matrix $A$ as demonstrated in \autoref{fig:s_eq_t}, we conclude that the bottom right $2^{n-2}\times 2^{n-2}$ submatrix of $A$ is equal to $C + X + X^\top + S^\top + T$.
    \begin{figure}[H]
        \centering
        \input{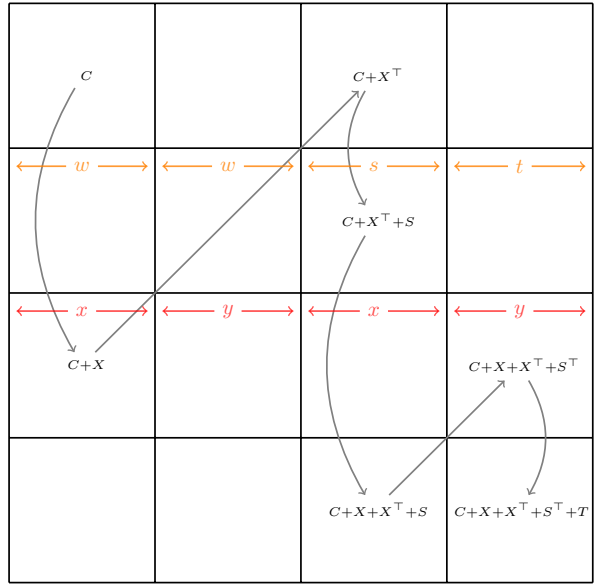}

\scalebox{.8}{

    \begin{tikzpicture}[scale=.6, ->, thick]

    \draw[step=4cm,black,thick, -] (0,0) grid (16,16);
    \node (1) at (2,14) {{ {\fontsize{6.5}{5} \selectfont $C\ $} }};
    \node (9) at (2,6) {{ {\fontsize{6.5}{5} \selectfont ${C}{+}{X}\ $} }};
    \node (5) at (10,14) {{ {\fontsize{6.5}{5} \selectfont ${C}{+}{X^\top}\ $} }};
    \node (7) at (10,10) {{ {\fontsize{6.5}{5} \selectfont ${C}{+}{X^\top}{+}{S}\ $} }};
    \node (15) at (10,2) {{ {\fontsize{6.5}{5} \selectfont ${C}{+}{X}{+}{X^\top}{+}{S}\ $} }};
    \node (14) at (14,6) {{ {\fontsize{6.5}{5} \selectfont ${C}{+}{X}{+}{X^\top}{+}{S^\top}\ $} }};
    \node (16) at (14,2) {{ {\fontsize{6.5}{5} \selectfont ${C}{+}{X}{+}{X^\top}{+}{S^\top}{+}{T}\ \ $} }};
    
    \node (w1) [Transparent Orange] at (2,11.5) {$w$};
    \node (w2) [Transparent Orange] at (6,11.5) {$w$};

    \node (s) [Transparent Orange] at (10,11.5) {$s$};
    \node (t) [Transparent Orange] at (14,11.5) {$t$};
    
    \node (x1) [Transparent Red] at (2,7.5) {$x$};
    \node (x2) [Transparent Red] at (10,7.5) {$x$};
    
    \node (y1) [Transparent Red] at (6,7.5) {$y$};
    \node (y2) [Transparent Red] at (14,7.5) {$y$};
    
    \draw [gray] (1) edge[bend right] (9)
            (9) edge  (5)
            (5) edge [bend right] (7)
            (7) edge [bend right] (15)
            (15) edge  (14)
            (14) edge [bend left] (16);
    \draw[->, Transparent Orange]
        (w1) edge (.2,11.5) edge (3.8,11.5)
        (w2) edge (4.2,11.5) edge (7.8,11.5)
        (s) edge (8.2,11.5) edge (11.8,11.5)
        (t) edge (12.2,11.5) edge (15.8,11.5)
        ;
    \draw[->, Transparent Red]
        (x1) edge (.2,7.5) edge (3.8,7.5)
        (y1) edge (4.2,7.5) edge (7.8,7.5)
        (x2) edge (8.2,7.5) edge (11.8,7.5)
        (y2) edge (12.2,7.5) edge (15.8,7.5);

    \end{tikzpicture}    
}
        \caption{Computing the bottom right submatrix of $A$.}
        \label{fig:s_eq_t}
    \end{figure}
    
    Since this submatrix lies on the main diagonal of $A$, it too has zeros on its diagonal. Therefore, 
    \begin{align*}
    0 &= (C + X + X^\top + S^\top + T)_{ii} = C_{ii} + x_i + x_i + s_i + t_i = s_i + t_i 
    \end{align*}
    and so $s_i = t_i$ for every  $i\in [2^{n-2}]$, and we conclude $s=t$. $\qedhere$
    \end{proof}
    \begin{claim}
    Either ($w=s$ and $x = y$) or ($w = \bar{s}$ and $x = \bar{y}$).
    \end{claim}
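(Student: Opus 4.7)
The plan is to evaluate the third $2^{n-2}$-column block of row $2^{n-2}+i$ of $A$ in two independent ways, for each $i \in [2^{n-2}]$, and compare them. Both the outer coset decomposition (via $B$ and $U$) and the inner coset decomposition (via $C$ and $W$), combined with row-space additivity, give explicit formulas for this sub-block, and the claim falls out of the forced agreement between the two.

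For the horizontal description, row $2^{n-2}+i$ lies in the top half of $A$, so its right half equals the corresponding row of $B+U^\top$. Using the block structure of $B$, the $(2^{n-2}+i)$-th row of $B$ is $(c_i + w,\ c_i + w + w_i\mbb{1})$; adding $u_{2^{n-2}+i}\cdot \mbb{1}_{2^{n-1}} = y_i\cdot\mbb{1}_{2^{n-1}}$ (since the second half of $u$ is $y$), the first half of the result---which is exactly the third $2^{n-2}$-block of the full row---is $c_i + w + y_i\mbb{1}$. For the vertical description, the second row-quarter is the coset $v_{n-1}+V_1$, so the same row of $A$ equals $v_{n-1}+u_i$, where $u_i$ is the $i$-th row of $A$. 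From the block structure of the top half, the third block of $u_i$ is $c_i + x_i\mbb{1}$, while the third block of $v_{n-1}$ is $s$ by definition, so the third block of $v_{n-1}+u_i$ is $s + c_i + x_i\mbb{1}$.

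Setting the two expressions equal gives $w + s = (x_i+y_i)\mbb{1}_{2^{n-2}}$ for every $i \in [2^{n-2}]$. Since the left-hand side is independent of $i$, the scalar $x_i+y_i$ must be a constant $\alpha \in \{0,1\}$, and then $w+s = \alpha\mbb{1}$ and $x+y = \alpha\mbb{1}$. The case $\alpha=0$ yields $w=s$ and $x=y$, and $\alpha=1$ yields $w=\bar s$ and $x=\bar y$, which are the two cases of the claim. The main subtlety is recognizing that the same sub-block admits both computations; once that is spotted, the uniform-in-$i$ coincidence of the two expressions immediately forces both $x+y$ and $w+s$ to be constant vectors, and the case analysis is trivial.
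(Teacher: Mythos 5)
Your proof is correct, and it uses the same underlying technique as the paper: compute one $2^{n-2}\times 2^{n-2}$ sub-block of $A$ in two independent ways (once from the horizontal block structure via $B$ and $U^\top$, once from the vertical coset structure) and equate. The only real difference is the target block. The paper chases the block in the third row-quarter and third column-quarter through two longer chains of transpositions and coset shifts, arrives at $X^\top + Y^\top = W + S$, and then still needs the diagonal observation $x_1 = w_1 = 0$ to extract the two cases; you compare the third column-block of the rows in the \emph{second} quarter, which gives $w + s = (x_i + y_i)\mbb{1}$ directly, so the case split on the constant $x_i + y_i$ is immediate and no diagonal argument is needed. Your route is a little shorter and cleaner. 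One cosmetic remark: you write $u_i$ both for the $i$-th row of $A$ and for the $i$-th coordinate of the coset vector $u$ (which equals $x_i$ for $i \le 2^{n-2}$); rename one of them to avoid confusion.
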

    \begin{proof}
    Using the coset and symmetry constraints of the matrix $A$ as before, we can compute the submatrix marked by $\incircbin{\star}$ in \autoref{fig:w_eq_s_x_eq_y} in two different ways.
    \begin{figure}[H]
        \centering
        \input{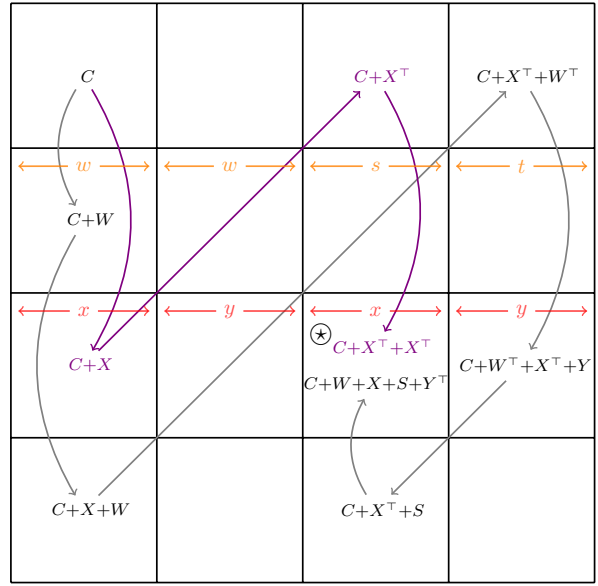}

\scalebox{.8}{

    \begin{tikzpicture}[scale=.6, ->, thick]

    \draw[step=4cm,black,thick, -] (0,0) grid (16,16);
    \node (1) at (2,14) {{ {\fontsize{7.6}{5} \selectfont $C\ $} }};
    \node (3) at (2,10) {{ {\fontsize{7.6}{5} \selectfont ${C}{+}{W}$} }};
    \node (5)[violet] at (10,14) {{ {\fontsize{7.6}{5} \selectfont ${C}{+}{X^\top}$} }};
    \node (6) at (14,14) {{ {\fontsize{7.6}{5} \selectfont ${C}{+}{X^\top}{+}{W^\top}$} }};
    \node (9)[violet] at (2,6) {{ {\fontsize{7.6}{5} \selectfont ${C}{+}{X}$} }};
    \node (11) at (2,2) {{  {\fontsize{7.6}{5} \selectfont ${C}{+}{X}{+}{W}$} }};
    \node (131) at (9.92,5.5) {{ {\fontsize{7.6}{5} \selectfont ${C}{+}{W}{+}{X}{+}{S}{+}{Y^\top}\ $} }};
    \node (132)[violet] at (10,6.5) {{ {\fontsize{7.6}{5} \selectfont ${C}{+}{X^\top}{+}{X^\top}$} }};
    \node (14) at (14,6) {{ {\fontsize{7.6}{5} \selectfont ${C}{+}{W^\top}{+}{X^\top}{+}{Y}\ $} }};
    \node (15) at (10,2) {{ {\fontsize{7.6}{5} \selectfont ${C}{+}{X^\top}{+}{S}$} }};

    \node (w1) [Transparent Orange] at (2,11.5) {$w$};
    \node (w2) [Transparent Orange] at (6,11.5) {$w$};
    
    \node (s) [Transparent Orange] at (10,11.5) {$s$};
    \node (t) [Transparent Orange] at (14,11.5) {$t$};
    
    \node (x1) [Transparent Red] at (2,7.5) {$x$};
    \node (x2) [Transparent Red] at (10,7.5) {$x$};
    
    \node (y1) [Transparent Red] at (6,7.5) {$y$};
    \node (y2) [Transparent Red] at (14,7.5) {$y$};
    
    \node (star) at (8.5,6.8) {\scalebox{1.1}{$\incircbin{\star}$}};
    
    \draw [gray] (1) edge[bend right] (3)
            (3) edge [bend right] (11)
            (11) edge (6)
            (6) edge [bend left] (14)
            (14) edge  (15)
            (15) edge [bend left] (131);
            
    \draw[violet]
        (1) edge [bend left] (9)
        (9) edge (5)
        (5) edge[bend left] (132);
    \draw[->, Transparent Orange]
        (w1) edge (.2,11.5) edge (3.8,11.5)
        (w2) edge (4.2,11.5) edge (7.8,11.5)
        (s) edge (8.2,11.5) edge (11.8,11.5)
        (t) edge (12.2,11.5) edge (15.8,11.5);
    \draw[->, Transparent Red]
        (x1) edge (.2,7.5) edge (3.8,7.5)
        (y1) edge (4.2,7.5) edge (7.8,7.5)
        (x2) edge (8.2,7.5) edge (11.8,7.5)
        (y2) edge (12.2,7.5) edge (15.8,7.5);
    
    \end{tikzpicture}    
}
        \caption{Computing the marked submatrix in two different ways.}
        \label{fig:w_eq_s_x_eq_y}
    \end{figure}
    \noindent Comparing the two results we obtained for the submatrix $\incircbin{\star}$, 
    \begin{align*}
    C + X + X^\top = C + W + X + Y^\top +S \implies X^\top + Y^\top = W + S
    \end{align*}
    Thus $x_i + y_i = w_j + s_j$, for every $i,j$.
    Note that $x_1 = w_1 = 0$ since they lie on the diagonal. Thus, plugging in $i=1, j=1$ we get $s_1 = y_1$. Furthermore, plugging in $i=1$ or $j=1$, separately, we have:
    \begin{align*}
      s_j + w_j = y_1 \ \forall\: j, \text{ and } x_i + y_i = s_1 \ \forall\: i.
    \end{align*}
    Therefore, if $s_1 = 1$ then $s = w + \one = \bar{w}$ and $x = y + \one = \bar{y}$ and otherwise $s=w, x=y$. 
    \end{proof}
    
    \begin{lemma}\label{lem:xw_rowsp_c}
    Either $x,w \in \rowsp(C)$ or $x,w\notin \rowsp(C)$.
    \end{lemma}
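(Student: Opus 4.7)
The claim is equivalent to the biconditional $x \in \rowsp(C) \iff w \in \rowsp(C)$. My strategy is to first pin down $\rowsp(B)$ as a function of $\rowsp(C)$ and $w$, and then extract the desired dichotomy by invoking $u = (x,y) \notin \rowsp(B)$ from \autoref{lem:u_notin_rowsp_b}, together with the already-established constraints $y \in \{x, \bar x\}$ and $s = t \in \{w, \bar w\}$.

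Each row of $B$ has the form $(c_i, c_i + w_i \mbb{1})$ (top half) or $(c_i + w, c_i + w + w_i \mbb{1})$ (bottom half), the latter being the former shifted by $(w,w)$; hence $\rowsp(B) = \Sp\{(c_i, c_i + w_i \mbb{1}) : i \in [2^{n-2}]\} + \FF_2 (w,w)$. Using $C = C^\top$, the assignment $c_i \mapsto w_i$ descends to a well-defined linear functional $\beta : \rowsp(C) \to \FF_2$ exactly when $w \in \colsp(C) = \rowsp(C)$; otherwise, a left-null vector $\alpha$ of $C$ with $\alpha \cdot w = 1$ forces $(\mbb{0}, \mbb{1}) \in \rowsp(B)$. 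This yields three cases: \emph{(i)} $w \notin \rowsp(C)$, with $\rowsp(B) = \{(c, c + \gamma \mbb{1}) : c \in \rowsp(C) + \FF_2 w,\ \gamma \in \FF_2\}$; \emph{(ii)} $w \in \rowsp(C)$ and $\beta(w) = 0$, with $\rowsp(B) = \{(c, c + \beta(c) \mbb{1}) : c \in \rowsp(C)\}$; \emph{(iii)} $w \in \rowsp(C)$ and $\beta(w) = 1$, with $\rowsp(B) = \{(c, c + \gamma \mbb{1}) : c \in \rowsp(C),\ \gamma \in \FF_2\}$. Plugging $u = (x,y) \notin \rowsp(B)$ into these descriptions: case (i) immediately yields $x \notin \rowsp(C) + \FF_2 w \supseteq \rowsp(C)$, so both $x, w \notin \rowsp(C)$ and the lemma holds; case (iii) yields $x \notin \rowsp(C)$ while $w \in \rowsp(C)$, which would \emph{violate} the lemma and so must be excluded; case (ii) yields either $x \notin \rowsp(C)$ or $x \in \rowsp(C)$ with the wrong $\beta(x)$ value, and must be refined using $y \in \{x, \bar x\}$ so that only the $x \in \rowsp(C)$ branch survives.

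To exclude case (iii) and the $x \notin \rowsp(C)$ branch of (ii), I exploit the rigid structure of $A$: since $A$ has $2^n$ rows with $\rktwo(A) = n$, every element of $\rowsp(A)$ occurs as a row by \autoref{prop:rk_vs_rows}, and each such row has a zero diagonal entry. In case (iii), the membership $(\mbb{0}, \mbb{1}) \in \rowsp(B)$ combined with $u \notin \colsp(B)$ lets me realise both parities $\alpha \cdot u \in \{0,1\}$ among coefficient vectors $\alpha$ with $\alpha^\top B = (\mbb{0}, \mbb{1})$, producing both $(\mbb{0}, \mbb{1}, \mbb{0}, \mbb{1})$ and $(\mbb{0}, \mbb{1}, \mbb{1}, \mbb{0})$ as rows of $A$; matching their diagonal positions against the known block structure of $A$ and the earlier relations $s = t$, ``$w = s, x = y$'' vs.\ ``$w = \bar s, x = \bar y$'' yields the contradiction. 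An analogous bookkeeping argument---tracing how the hypothetical $x \notin \rowsp(C)$ in case (ii) would inject additional vectors into the realised rows of $A$ incompatible with the $c_i \mapsto w_i$ linearisation---handles the bad branch of (ii).

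\textbf{Main obstacle.} The case split and structural description of $\rowsp(B)$ are mechanical, and \autoref{lem:u_notin_rowsp_b} does the heavy lifting for case (i). The substantive work is the last step: translating the abstract case distinction into a concrete contradiction by tracking which elements of $\rowsp(B)$ lift to which rows of $A$, locating their diagonal positions, and showing the resulting configurations are incompatible with $A$ being a symmetric, zero-diagonal, twin-free adjacency matrix of rank $n$. This is essentially the one-level-deeper analogue of the proof of \autoref{lem:u_notin_rowsp_b}, and is where the bulk of the combinatorial bookkeeping lies.
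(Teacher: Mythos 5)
Your reduction is genuinely different from the paper's, and part of it is more efficient. The paper never computes $\rowsp(B)$ explicitly; it instead equates $n-2=\rktwo(C\mid C+W^\top\mid C+X^\top\mid C+W^\top+X^\top)$ with $\rktwo(C)$ plus the two membership indicators, and then kills each mixed case by a separate rank-collapse argument. Your explicit description of $\rowsp(B)=\Sp\{(c_i,c_i+w_i\mbb{1})\}+\FF_2(w,w)$ via the functional $\beta$, combined with \autoref{lem:u_notin_rowsp_b} and the previously established $y\in\{x,\bar{x}\}$, disposes of the entire case $w\notin\rowsp(C)$ in one line; this cleanly replaces the paper's second contradiction paragraph ($w\notin\Sp(C),\ x\in\Sp(C)$). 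You also missed a free simplification: since $w\in\rowsp(C)=\colsp(C)$ lets you write $\beta(c)=c^\top z$ for some $z$ with $Cz=w$, you get $\beta(w)=z^\top Cz=0$ because $C$ is symmetric with zero diagonal, so your case (iii) is vacuous and needs no combinatorial exclusion at all.

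The genuine gap is the remaining bad branch, $w\in\rowsp(C)$ and $x\notin\rowsp(C)$ (the bad branch of your case (ii), and case (iii) if you do not use the $z^\top Cz$ observation). For (ii) you offer only "an analogous bookkeeping argument \ldots handles the bad branch," which is not an argument; and for (iii) the claimed contradiction from placing $(\mbb{0},\mbb{1},\mbb{0},\mbb{1})$ and $(\mbb{0},\mbb{1},\mbb{1},\mbb{0})$ on the diagonal does not obviously materialise --- their sum $(\mbb{0},\mbb{0},\mbb{1},\mbb{1})$ can sit in the top half of $A$ without violating symmetry, the zero diagonal, or twin-freeness, so more is needed. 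This branch is exactly where the paper invests its real work: it shows $x$ must be balanced (otherwise $(C\mid C+W^\top)_{x^{-1}(b)}$ has twin rows for the larger fibre by \autoref{prop:rk_vs_rows}, lifting to twin rows of $A$), deduces that the two fibres carry the \emph{same} $(n-3)$-dimensional row subspace, upgrades $w\in\Sp(C)$ to $w\in\rows(C)$ using that doubled-subspace structure, concludes $(w,w)\in\rows(C\mid C+W^\top)$ so the top half of $B$ spans its bottom half, and thereby contradicts $\rktwo(B)=n-2$ from \autoref{lem:rank_b_n_minus_2}. None of that machinery appears in your sketch, and I do not see how to complete the branch from the $c_i\mapsto w_i$ linearisation alone; as written, the proposal proves only half of the dichotomy.
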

    \begin{proof}
    Recall we have one of the two following cases,
    \begin{figure}[H]
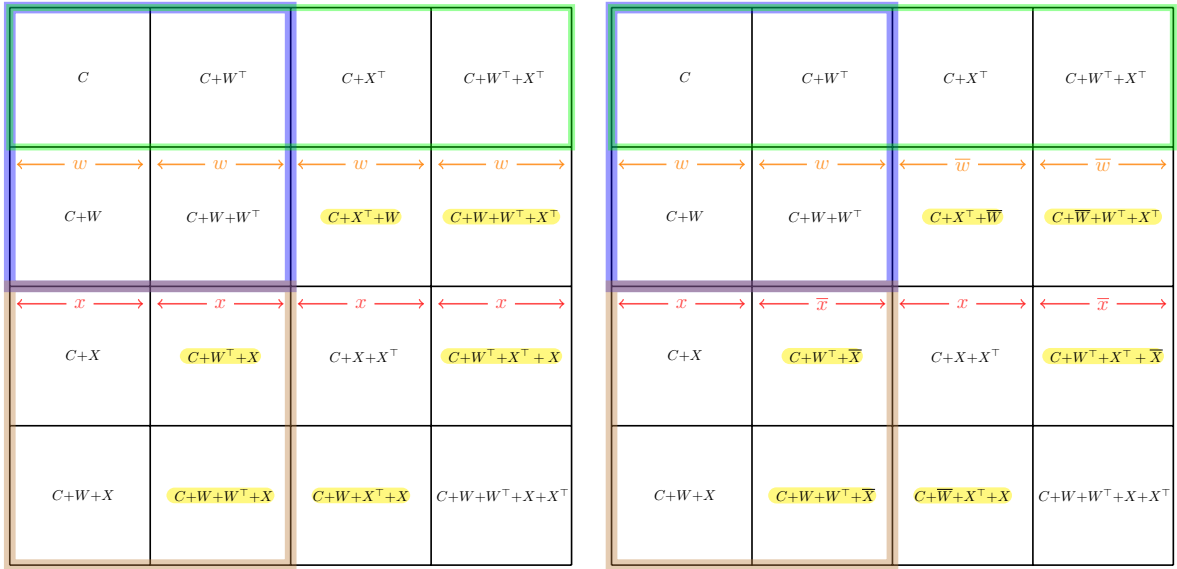

    \centering
    \begin{subfigure}{0.49\textwidth}
        \centering
        \input{drawings/no_odd_drawings/TikzSettings}

\scalebox{.77}{

    \begin{tikzpicture}[scale=.6, ->, thick]
    \draw[step=4cm,black,thick, -] (0,0) grid (16,16);
    \node (1) at (2,14) {{ \scalebox{.7}{$C$}}};
    \node (2) at (6,14) {{ \scalebox{.7}{${C}{+}{W^\top}$}}};
    \node (3) at (2,10) {{ \scalebox{.7}{${C}{+}{W}$}}};
    \node (4) at (6,10) {{ \scalebox{.7}{${C}{+}{W}{+}{W^\top}$}}};
    \node (5) at (10,14) {{ \scalebox{.7}{${C}{+}{X^\top}$}}};
    \node (6) at (14,14) {{ \scalebox{.7}{${C}{+}{W^\top}{+}{X^\top}$}}};
    \node (7) [Diff Nodes] at (10,10) {{ \scalebox{.7}{${C}{+}{X^\top}{+}{W}$}}};
    \node (8) [Diff Nodes] at (14,10) {{  \scalebox{.7}{${C}{+}{W}{+}{W^\top}{+}{X^\top}$}}};
    \node (9) at (2,6) {{ \scalebox{.7}{${C}{+}{X}$}}};
    \node (10) [Diff Nodes] at (6,6) {{ \scalebox{.7}{${C}{+}{W^\top}{+}{X}$}}};
    \node (11) at (2,2) {{ \scalebox{.7}{${C}{+}{W}{+}{X}$}}};
    \node (12)[Diff Nodes] at (6,2) {{  \scalebox{.7}{${C}{+}{W}{+}{W^\top}{+}{X}$}}};
    \node (13) at (10,6) {{ \scalebox{.7}{${C}{+}{X}{+}{X^\top}$}}};
    \node (14)[Diff Nodes] at (14,6) {{  \scalebox{.7}{${C}{+}{W^\top}{+}{X^\top + X}$}}};
    \node (15)[Diff Nodes] at (10,2) {{\scalebox{.7}{${C}{+}{W}{+}{X^\top}{+}{X}$}}};
    \node (16) at (13.95,2) {{ \scalebox{.7}{${C}{+}{W}{+}{W^\top}{+}{X}{+}{X^\top}$}}};
    
    \node (w1) [Transparent Orange] at (2,11.5) {$w$};
    \node (w2) [Transparent Orange] at (6,11.5) {$w$};
    
    \node (s) [Transparent Orange] at (10,11.5) {$w$};
    \node (t) [Transparent Orange] at (14,11.5) {$w$};
    
    \node (x1) [Transparent Red] at (2,7.5) {$x$};
    \node (x2) [Transparent Red] at (10,7.5) {$x$};
    
    \node (y1) [Transparent Red] at (6,7.5) {$x$};
    \node (y2) [Transparent Red] at (14,7.5) {$x$};
    
    \draw[->, Transparent Orange]
        (w1) edge (.2,11.5) edge (3.8,11.5)
        (w2) edge (4.2,11.5) edge (7.8,11.5)
        (s) edge (8.2,11.5) edge (11.8,11.5)
        (t) edge (12.2,11.5) edge (15.8,11.5);
    
    \draw[->, Transparent Red]    
        (x1) edge (.2,7.5) edge (3.8,7.5)
        (y1) edge (4.2,7.5) edge (7.8,7.5)
        (x2) edge (8.2,7.5) edge (11.8,7.5)
        (y2) edge (12.2,7.5) edge (15.8,7.5);
    
    \draw[line width = 2mm, opacity = .4, blue] (0,16) -- (8,16) -- (8,8) -- (0,8) -- cycle;
    \draw[line width = 1.2mm, opacity = .4, green] (0,16) -- (16,16) -- (16,12) -- (0,12) -- cycle;
    \draw[line width = 2mm, opacity = .4, brown] (0,8) -- (8,8) -- (8,0) -- (0,0) -- cycle;

    \end{tikzpicture}    
}
        \caption{Case I}
    \end{subfigure}
    \begin{subfigure}{0.49\textwidth}
        \centering
        \input{drawings/no_odd_drawings/TikzSettings}

\scalebox{.77}{

    \begin{tikzpicture}[scale=.6, ->, thick]
    \draw[step=4cm,black,thick, -] (0,0) grid (16,16);
    \node (1) at (2,14) {{ \scalebox{.7}{$C$}}};
    \node (2) at (6,14) {{ \scalebox{.7}{${C}{+}{W^\top}$}}};
    \node (3) at (2,10) {{ \scalebox{.7}{${C}{+}{W}$}}};
    \node (4) at (6,10) {{ \scalebox{.7}{${C}{+}{W}{+}{W^\top}$}}};
    \node (5) at (10,14) {{ \scalebox{.7}{${C}{+}{X^\top}$}}};
    \node (6) at (14,14) {{ \scalebox{.7}{${C}{+}{W^\top}{+}{X^\top}$}}};
    \node (7) [Diff Nodes] at (10,10) {{ \scalebox{.7}{${C}{+}{X^\top}{+}{\overline{W}}$}}};
    \node (8) [Diff Nodes] at (14,10) {{  \scalebox{.7}{${C}{+}{\overline{W}}{+}{W^\top}{+}{X^\top}$}}};
    \node (9) at (2,6) {{ \scalebox{.7}{${C}{+}{X}$}}};
    \node (10) [Diff Nodes] at (6,6) {{ \scalebox{.7}{${C}{+}{W^\top}{+}{\overline{X}}$}}};
    \node (11) at (2,2) {{ \scalebox{.7}{${C}{+}{W}{+}{X}$}}};
    \node (12)[Diff Nodes] at (6,2) {{  \scalebox{.7}{${C}{+}{W}{+}{W^\top}{+}{\overline{X}}$}}};
    \node (13) at (10,6) {{ \scalebox{.7}{${C}{+}{X}{+}{X^\top}$}}};
    \node (14)[Diff Nodes] at (14,6) {{  \scalebox{.7}{${C}{+}{W^\top}{+}{X^\top + \overline{X}}$}}};
    \node (15)[Diff Nodes] at (10,2) {{\scalebox{.7}{${C}{+}{\overline{W}}{+}{X^\top}{+}{X}$}}};
    \node (16) at (13.95,2) {{ \scalebox{.7}{${C}{+}{W}{+}{W^\top}{+}{X}{+}{X^\top}$}}};
    
    \node (w1) [Transparent Orange] at (2,11.5) {$w$};
    \node (w2) [Transparent Orange] at (6,11.5) {$w$};
    
    \node (s) [Transparent Orange] at (10,11.5) {$\overline{w}$};
    \node (t) [Transparent Orange] at (14,11.5) {$\overline{w}$};
    
    \node (x1) [Transparent Red] at (2,7.5) {$x$};
    \node (x2) [Transparent Red] at (10,7.5) {$x$};
    
    \node (y1) [Transparent Red] at (6,7.5) {$\overline{x}$};
    \node (y2) [Transparent Red] at (14,7.5) {$\overline{x}$};
    
    \draw[->, Transparent Orange]
        (w1) edge (.2,11.5) edge (3.8,11.5)
        (w2) edge (4.2,11.5) edge (7.8,11.5)
        (s) edge (8.2,11.5) edge (11.8,11.5)
        (t) edge (12.2,11.5) edge (15.8,11.5);
    
    \draw[->, Transparent Red]    
        (x1) edge (.2,7.5) edge (3.8,7.5)
        (y1) edge (4.2,7.5) edge (7.8,7.5)
        (x2) edge (8.2,7.5) edge (11.8,7.5)
        (y2) edge (12.2,7.5) edge (15.8,7.5);
    
    \draw[line width = 2mm, opacity = .4, blue] (0,16) -- (8,16) -- (8,8) -- (0,8) -- cycle;
    \draw[line width = 1.2mm, opacity = .4, green] (0,16) -- (16,16) -- (16,12) -- (0,12) -- cycle;
    \draw[line width = 2mm, opacity = .4, brown] (0,8) -- (8,8) -- (8,0) -- (0,0) -- cycle;

    \end{tikzpicture}    
}
        \caption{Case II}
    \end{subfigure}
    \caption{The two possible cases for $A$ and its submatrices, the only differences are highlighted in yellow}
    \label{fig:two_cases}
    \end{figure}
    In both cases the blue square and green rectangle in \autoref{fig:two_cases} are the same,  thus in both cases,
    $$
    n-2 = \rktwo(C \mid C + W^\top \mid C + X^\top \mid C + W^\top + X^\top ) = \rktwo(C) + \one_{x\in \Sp(C)} + \one_{w\in \Sp(C)}.
    $$
    
    We proceed by case analysis, to rule out the two undesirable cases for $x,w$. Moreover, for simplicity, let us introduce the following notation used throughout the remainder of our proof.

    \begin{notation}
        Let $A \in M_{m \times n}(\FF_2)$ be a matrix and let $S \subseteq [m]$. Then, $A_S$ is $A$ restricted to the rows $S$.
    \end{notation}

    \underline{Assume for contradiction $w\in \Sp(C), x\notin \Sp(C)$}: In what follows, let us denote $M = (C \mid C + W^\top)$, and $M_b = M_{x^{-1}(b)}$ for $b \in \{0,1\}$. By assumption, $\rktwo(C) = \rktwo(M) = n-3$. Now, notice that any twin rows in $M_b$ correspond to twin rows in $A$ (by considering their continuation within the green rectangle, as in \autoref{fig:two_cases}). However, by \autoref{prop:rk_vs_rows}, if $|x^{-1}(b)| > 2^{n-3}$, then the matrix $M_b$ necessarily has twin rows. So to avoid twin rows in $A$, we conclude that $x$ is \textit{balanced}, i.e., $|x^{-1}(0)| = |x^{-1}(1)| = 2^{n-3}$. Applying \autoref{prop:rk_vs_rows} to the matrices $M_b$ once more, we obtain that:
    \begin{align*}
        \rows(M_0) = V_0 \leq \FF_2^{2^{n-1}}, \text{ and }\rows(M_1) = V_1 \leq \FF_2^{2^{n-1}}
    \end{align*}
    where $\dim(V_0) = \dim(V_1) = n-3$. Finally, we remark that $V_0 = V_1$ since otherwise $\rktwo(M)$ would exceed $n-3$, in contradiction to our assumption. Hereafter let us denote this subspace $V$.
    
    From the assumption $w \in \Sp(C)$, it follows that there exists a set $S \subseteq \rows(C)$ whose sum is $w$. Since $C$ contains two copies of the same row set (from its two copies of $V$, restricted to the columns of $C$), it suffices to use only one such copy to span $w$. However, these sets are similarly closed under addition (since $V$ is a subspace), and hence $w \in \rows(C)$. Denote by $i_w$ the index of the row $w$, and note that $w_{i_w} = 0$ as it lies on the main diagonal. Therefore, $w + w_{i_w} = w$ and so $(w,w) \in \rows(M)$. Ergo, the rows in the top half of $B$ span its bottom half. Applying \autoref{lem:rank_b_n_minus_2} yields the following contradiction:
    \[
        n-2 = \rktwo(B) = \renewcommand{\arraystretch}{1.5}\rktwo
        \left[\begin{array}{c|c} 
        	C & C + W^\top \\ 
        	\hline 
        	C + W & C + W + W^\top  
        \end{array}\right]
        = \rktwo(C \mid C + W^\top) = n-3.
    \]
    
    \underline{Assume for contradiction $w\notin \Sp(C), x\in \Sp(C)$}: This case is very similar to the one above, and is included for completeness. Let us denote $M = (C \mid C + X^\top)$, and $M_b = M_{x^{-1}(b)}$ for $b \in \{0,1\}$. By the same reasoning as before, we have $\rktwo(C) = \rktwo(M) = n-3$, $w$ is \textit{balanced} ($|w^{-1}(0)| = |w^{-1}(1)| = 2^{n-3}$), and
    \begin{align*}
        \rows(M_0) = \rows(M_1) = V \le \FF_2^{2^{n-1}}, \text{ where } \dim(V) = n-3.
    \end{align*}

    In this case, $x\in \Sp(C) \implies x\in \rows(C)$. Using the notation of $i_x$ for the index of the row $x$, since $x_{i_x} = 0$, we have $(x,x) \in \rows(M)$. Furthermore, this row appears \textbf{twice} in $\rows(M)$, once in the rows indexed by $w^{-1}(0)$, and once in the rows indexed by $w^{-1}(1)$. Meaning, $(x,x), (x,\bar{x}) \in \rows(M)$. Therefore, in both cases I and II, the rows in the blue square span those in the brown square. As a result, the concatenation of these two squares, i.e., the left half of $A$, has the same rank as $B$, the blue square. Recall that by construction the left half of $A$ has rank exactly $n-1$, whereas by \autoref{lem:rank_b_n_minus_2}, 
    \[
        \rktwo(B) = \rktwo
            \renewcommand{\arraystretch}{1.5}\left[\begin{array}{c|c} 
            	C & C + W^\top \\ 
            	\hline 
            	C + W & C + W + W^\top  
            \end{array}\right] = n-2 
    \]
    which is a contradiction. $\qedhere$
    \end{proof}
    
    \begin{claim} \label{clm:xw_in_rowsp_c}
    If $x,w \in \rowsp(C)$, then $C$ is a twin free matrix of rank $(n-2)$.
    \end{claim}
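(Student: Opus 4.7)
The plan is to split the claim into two parts, the rank equality and the twin-freeness, and deduce both from the subspace structure of the rows of $A$.

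For the rank equality, I would reuse the rank calculation from the proof of \autoref{lem:xw_rowsp_c}. The top quarter of $A$ (the first $2^{n-2}$ rows), under the ordering fixed at the start of the proof of \autoref{thm:no_odd_n}, is exactly the linear subspace $\Sp(v_1, \dots, v_{n-2})$ of the row-space of $A$ and therefore has rank exactly $n-2$. Its explicit form is $(C \mid C+W^\top \mid C+X^\top \mid C+W^\top+X^\top)$, and column operations show that this rank equals $\dim(\Sp(C) + \Sp(w) + \Sp(x))$. Since $C$ is symmetric, $\rowsp(C) = \Sp(C)$, so the hypothesis $x, w \in \rowsp(C)$ forces $\Sp(x), \Sp(w) \subseteq \Sp(C)$ and the dimension collapses to $\rktwo(C)$. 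Hence $\rktwo(C) = n-2$.

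For the twin-freeness, I would again use the ordering: the first $2^{n-2}$ rows of $A$ are precisely the $2^{n-2}$ elements of the linear subspace $V^{(2)} := \Sp(v_1, \dots, v_{n-2})$, a subspace of dimension $n-2$ within the row-space of $A$. By definition, the rows of $C$ are obtained by restricting each vector in $V^{(2)}$ to the first $2^{n-2}$ coordinates; this restriction is an $\FF_2$-linear map $\pi : V^{(2)} \to \FF_2^{2^{n-2}}$ whose image is $\rowsp(C)$. By the rank step just established, $\dim(\mathrm{Im}\,\pi) = n-2 = \dim(V^{(2)})$, so $\pi$ is an isomorphism. Therefore the $2^{n-2}$ elements of $V^{(2)}$ project to $2^{n-2}$ pairwise distinct rows of $C$, and $C$ is twin-free.

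The whole argument reduces to a short dimension count once the rank step is in place, and I do not foresee any substantive obstacle. The only easy-to-miss points are that the chosen ordering makes the first $2^{n-2}$ rows of $A$ a genuine linear subspace (immediate from the ordering $(\mathbf{0}, v_1, v_2, v_2+v_1, \dots)$) and that the symmetry of $C$ is needed to translate between $\rowsp(C)$ and $\colsp(C)$ inside the rank calculation.
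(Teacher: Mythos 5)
Your proof is correct, and the twin-freeness half takes a genuinely different (and shorter) route than the paper's. The rank statement is handled the same way in both: the top quarter of $A$ is the $(n-2)$-dimensional subspace $\Sp(v_1,\dots,v_{n-2})$ written out as $(C \mid C+W^\top \mid C+X^\top \mid C+W^\top+X^\top)$, so column operations together with the symmetry of $C$ give $n-2=\rktwo(C)$ once $w,x\in\rowsp(C)$; this is exactly the displayed identity in the proof of \autoref{lem:xw_rowsp_c} (whose indicators should be read as $\mathbbm{1}_{x\notin\Sp(C)}+\mathbbm{1}_{w\notin\Sp(C)}$, as your derivation makes clear). For twin-freeness the paper argues by contradiction: assuming $C$ repeats a row $t$ at indices $i\neq j$, it uses the double-subspace structure of $B$ to force $w_i\neq w_j$ and $x_i\neq x_j$, exhibits $(\mbb{0},\mbb{0},\mbb{1},\mbb{1})$, $(\mbb{0},\mbb{1},\mbb{0},\mbb{1})$, $(\mbb{0},\mbb{1},\mbb{1},\mbb{0})$ in $\rowsp(Q)$, and combines these with $(x,x+b_w,x+b_x,x+b_w+b_x)\in\rowsp(Q)$ to place $(x,x,x,x)$ in $\rowsp(Q)$, collapsing the bottom half of $A$ into the span of the top half. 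You instead note that the rows of $C$ are the image of the $(n-2)$-dimensional subspace $V^{(2)}$ under the coordinate-restriction map $\pi$, so that $\rowsp(C)=\mathrm{Im}(\pi|_{V^{(2)}})$ (an image of a subspace is a subspace), and $\rktwo(C)=n-2=\dim V^{(2)}$ forces $\pi|_{V^{(2)}}$ to be injective by rank--nullity, making the $2^{n-2}$ rows of $C$ pairwise distinct. This buys a purely dimension-theoretic argument that also sidesteps the Case I / Case II split entirely, since it only touches the top quarter of $A$, which is identical in both cases; the paper's longer argument is more explicit but establishes nothing beyond the claim that is needed later.
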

    \begin{proof}
    We will prove this claim for Case I (using the notations from \autoref{fig:two_cases}), the proof for Case II is identical. From our assumption $x\in \rowsp(C)$, we deduce that there exist bits $b_w, b_x \in \{0,1\}$ such that,
    $$
    (x, x + b_w, x + b_x, x + b_w + b_x) \in \rowsp(C \mid C + W^\top \mid C + X^\top \mid C + W^\top + X^\top).
    $$
    Let us denote the top half of $A$ by $Q$. It suffices to show that $(\mbb{0}, \mbb{0}, \mbb{1}, \mbb{1}), (\mbb{0}, \mbb{1}, \mbb{0}, \mbb{1}), (\mbb{0}, \mbb{1}, \mbb{1}, \mbb{0}) \in \rowsp(Q)$, since this will imply $(x, \bar{x}, x, \bar{x}), (x, x, x, x) \in \rowsp(Q)$ and thus any row in the bottom half of $A$ is spanned by rows in the top half, which is a contradiction.
    
    Recall that from \autoref{lem:rank_b_n_minus_2}, $\rktwo(B) = n-2$. Now, note that for $b\in \{0,1\}$, twin rows in $B_{(x,x)^{-1}(b)}$ correspond to twin rows in $A$. Thus, following the same reasoning as in the proof of \autoref{lem:xw_rowsp_c}, we deduce that $x$ is \textit{balanced} ($|x^{-1}(0)| = |x^{-1}(1)| = 2^{n-3}$), and furthermore,
    \begin{align*}
        \rows\left(B_{(x,x)^{-1}(0)}\right) = \rows\left(B_{(x,x)^{-1}(1)}\right) = V \le \FF_2^{2^{n-1}}, \text{ where } \dim(V) = n-2.
    \end{align*}

    Assume for contradiction that some row $t\in \FF_2^{2^{n-2}}$ appears \textit{twice} in $C$, at indices $i\neq j$. It cannot be that $w_i = w_j$, since this would imply that $(t, t + w_i)$ appears \textit{four times} in $B$, in contradiction to its double-subspace structure outlined above. We remark that the rows $(t,t)$ and $(t, \overline{t})$ appear once in the indices of $x^{-1}(0)$ and once in the indices of $x^{-1}(1)$, and thus, without loss of generality, let $x_j = w_i = 1$, $x_i = w_j = 0$. This yields the following structure,
    \begin{figure}[H]
        \centering
        \input{drawings/no_odd_drawings/TikzSettings}

\def\toneheight{14.7}
\def\ttwoheight{10.7}
\def\offset{1.5}
\scalebox{0.9}{

    \begin{tikzpicture}[scale=.6, ->, thick]
    \draw[step=4cm,black,thick, -] (0,8) grid (16,16);
    \node (1) at (2,14) {{ \scalebox{.8}{${C}$}}};
    \node (2) at (6,14) {{ \scalebox{.8}{${C}{+}{W^\top}$}}};
    \node (3) at (2,10) {{ \scalebox{.8}{${C}{+}{W}$}}};
    \node (4) at (6,10) {{ \scalebox{.8}{${C}{+}{W}{+}{W^\top}$}}};
    \node (5) at (10,14) {{ \scalebox{.8}{${C}{+}{X^\top}$}}};
    \node (6) at (14,14) {{ \scalebox{.8}{${C}{+}{W^\top}{+}{X^\top}$}}};
    \node (7) at (10,10) {{ \scalebox{.8}{${C}{+}{X^\top}{+}{W}$}}};
    \node (8) at (14,10) {{ \scalebox{.8}{${C}{+}{W}{+}{W^\top}{+}{X^\top}$}}};
    
    \node (t1)[blue] at (2,\toneheight) {$t$};
    \node (t2)[blue] at (6,\toneheight) {$\overline{t}$};
    \node (t3)[blue] at (10,\toneheight) {$t$};
    \node (t4)[blue] at (14,\toneheight) {$\overline{t}$};
    
    \node (t5)[blue] at (2,\toneheight - \offset) {$t$};
    \node (t6)[blue] at (6,\toneheight - \offset) {$t$};
    \node (t7)[blue] at (10,\toneheight - \offset) {$t$};
    \node (t8)[blue] at (14,\toneheight - \offset) {$t$};
    
    \node (t21)[blue] at (2,\ttwoheight) {$t$};
    \node (t22)[blue] at (6,\ttwoheight) {$\overline{t}$};
    \node (t23)[blue] at (10,\ttwoheight) {$t$};
    \node (t24)[blue] at (14,\ttwoheight) {$\overline{t}$};
    
    \node (t25)[blue] at (2,\ttwoheight - \offset) {$t$};
    \node (t26)[blue] at (6,\ttwoheight - \offset) {$t$};
    \node (t27)[blue] at (10,\ttwoheight - \offset) {$\overline{t}$};
    \node (t28)[blue] at (14,\ttwoheight - \offset) {$\overline{t}$};
    
    \node (i)[black] at (-.3,\toneheight) {$i$};
    \node (j)[black] at (-.3,\toneheight - \offset) {$j$};

    \node (w1) [Transparent Orange, opacity=0.6] at (4.3,10) {$w$};
    \node (w2) [Transparent Orange, opacity=0.6] at (4.3,14) {$w$};

    \node (x1) [red] at (8.3,10) {$x$};
    \node (x2) [red] at (8.3,14) {$x$};

\draw[->, Transparent Orange, opacity=0.6]
    (w2) edge (4.3,15.9) edge (4.3,12.1)
    (w1) edge (4.3,11.9) edge (4.3,8.1);

\draw[->, red, opacity=0.6]
    (x2) edge (8.3,15.9) edge (8.3,12.1)
    (x1) edge (8.3,11.9) edge (8.3,8.1);

\draw[->, blue, thick]
    (t1) edge (.2,\toneheight) edge (3.8,14.7)
    (t2) edge (4.2,\toneheight) edge (7.8,14.7)
    (t3) edge (8.2,\toneheight) edge (11.8,\toneheight)
    (t4) edge (12.2,\toneheight) edge (15.8,\toneheight)
    
    (t5) edge (.2,\toneheight -\offset) edge (3.8,\toneheight -\offset)
    (t6) edge (4.2,\toneheight -\offset) edge (7.8,\toneheight -\offset)
    (t7) edge (8.2,\toneheight -\offset) edge (11.8,\toneheight -\offset)
    (t8) edge (12.2,\toneheight -\offset) edge (15.8,\toneheight - \offset)
    
    (t21) edge (.2,\ttwoheight) edge (3.8,\ttwoheight)
    (t22) edge (4.2,\ttwoheight) edge (7.8,\ttwoheight)
    (t23) edge (8.2,\ttwoheight) edge (11.8,\ttwoheight)
    (t24) edge (12.2,\ttwoheight) edge (15.8,\ttwoheight)
    
    (t25) edge (.2,\ttwoheight - \offset) edge (3.8,\ttwoheight - \offset)
    (t26) edge (4.2,\ttwoheight - \offset) edge (7.8,\ttwoheight - \offset)
    (t27) edge (8.2,\ttwoheight - \offset) edge (11.8,\ttwoheight - \offset)
    (t28) edge (12.2,\ttwoheight - \offset) edge (15.8,\ttwoheight - \offset);
 
%Overlaying the intersection points
%\node (w_i) [Transparent Orange, draw opacity=0.6, fill opacity = 1, fill=white, opacity=1, draw=none] at (4.3,\toneheight) {\scalebox{.7}{$1$}};

\end{tikzpicture}    
}
    \end{figure}
    \noindent Hence indeed $(\mbb{0}, \mbb{0}, \mbb{1}, \mbb{1}), (\mbb{0}, \mbb{1}, \mbb{0}, \mbb{1}), (\mbb{0}, \mbb{1}, \mbb{1}, \mbb{0}) \in \rowsp(Q)$, as required. $\qedhere$
    \end{proof}
    
    It remains to handle the case in which $x,w \notin \rowsp(C)$ (and thus $\rktwo(C) = n-4)$. Note that if for some $b,b' \in \{0,1\}$ we have $\abs{x^{-1}(b)\cap w^{-1}(b')} > 2^{n-4}$, then $C_{x^{-1}(b)\cap w^{-1}(b')}$ has repeating rows, and so does $A$. Therefore all the intersections must be \textit{balanced},
    \[
        \abs{x^{-1}(0)\cap w^{-1}(0)} = \abs{x^{-1}(0)\cap w^{-1}(1)} = \abs{x^{-1}(1)\cap w^{-1}(0)} = \abs{x^{-1}(1)\cap w^{-1}(1)} = 2^{n-4}.
    \]
    Finally, for additional simplicity in what follows, let us reorder $w$ and $x$ \textit{simultaneously} by applying a permutation $\pi\in S_{2^{n-2}}$ under which $w = \rbk{\mbb{0}, \mbb{0}, \mbb{1}, \mbb{1}}$ and $x = \rbk{\mbb{0}, \mbb{1}, \mbb{0}, \mbb{1}}$. To this end, let us conjugate every block of $A$ using $\pi$, and denote the resulting top-left block of $C$ by $D$, see \autoref{fig:permutation_on_a} and \autoref{fig:higher_res_permutation}.
    
    \begin{figure}[H]
        \centering
        \input{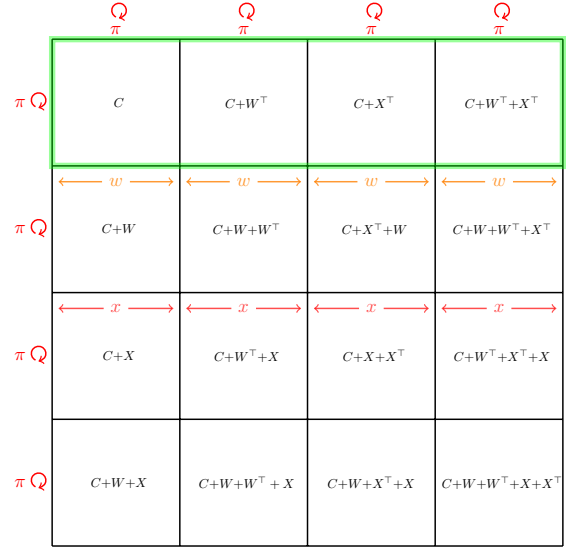}

\scalebox{.7}{

    \begin{tikzpicture}[scale=.6, ->, thick]
    \draw[step=4cm,black,thick, -] (0,0) grid (16,16);
    \node (1) [Low Visible] at (2,14) {{ \scalebox{.7}{${C}$}}};
    \node (2) [Low Visible] at (6,14) {{ \scalebox{.7}{${C}{+}{W^\top}$}}};
    \node (3) [Low Visible]at (2,10) {{ \scalebox{.7}{${C}{+}{W}$}}};
    \node (4) [Low Visible]at (6,10) {{ \scalebox{.7}{${C}{+}{W}{+}{W^\top}$}}};
    \node (5) [Low Visible]at (10,14) {{ \scalebox{.7}{${C}{+}{X^\top}$}}};
    \node (6) [Low Visible]at (14,14) {{ \scalebox{.7}{${C}{+}{W^\top}{+}{X^\top}$}}};
    \node (7) [Low Visible] at (10,10) {{ \scalebox{.7}{${C}{+}{X^\top}{+}{W}$}}};
    \node (8) [Low Visible] at (14,10) {{ \scalebox{.7}{${C}{+}{W}{+}{W^\top}{+}{X^\top}$}}};
    \node (9) [Low Visible]at (2,6) {{ \scalebox{.7}{${C}{+}{X}$}}};
    \node (10) [Low Visible] at (6,6) {{ \scalebox{.7}{${C}{+}{W^\top}{+}{X}$}}};
    \node (11)[Low Visible]  at (2,2) {{ \scalebox{.7}{${C}{+}{W}{+}{X}$}}};
    \node (12)[Low Visible] at (6,2) {{ \scalebox{.7}{${C}{+}{W}{+}{W^\top + X}$}}};
    \node (13) [Low Visible] at (10,6) {{ \scalebox{.7}{${C}{+}{X}{+}{X^\top}$}}};
    \node (14)[Low Visible] at (14,6) {{  \scalebox{.7}{${C}{+}{W^\top}{+}{X^\top}{+}{X}$}}};
    \node (15)[Low Visible] at (10,2) {{\scalebox{.7}{${C}{+}{W}{+}{X^\top}{+}{X}$}}};
    \node (16) [Low Visible] at (14,2) {{ \scalebox{.7}{${C}{+}{W}{+}{W^\top}{+}{X}{+}{X^\top}$}}};

    \node (w1) [Transparent Orange] at (2,11.5) {$w$};
    \node (w2) [Transparent Orange] at (6,11.5) {$w$};
    
    \node (s) [Transparent Orange] at (10,11.5) {$w$};
    \node (t) [Transparent Orange] at (14,11.5) {$w$};
    
    \node (x1) [Transparent Red] at (2,7.5) {$x$};
    \node (x2) [Transparent Red] at (10,7.5) {$x$};
    
    \node (y1) [Transparent Red] at (6,7.5) {$x$};
    \node (y2) [Transparent Red] at (14,7.5) {$x$};
    
    \draw[->, Transparent Orange]
        (w1) edge (.2,11.5) edge (3.8,11.5)
        (w2) edge (4.2,11.5) edge (7.8,11.5)
        (s) edge (8.2,11.5) edge (11.8,11.5)
        (t) edge (12.2,11.5) edge (15.8,11.5);
    \draw[->, Transparent Red]    
        (x1) edge (.2,7.5) edge (3.8,7.5)
        (y1) edge (4.2,7.5) edge (7.8,7.5)
        (x2) edge (8.2,7.5) edge (11.8,7.5)
        (y2) edge (12.2,7.5) edge (15.8,7.5);

    \draw[line width = 1.2mm, opacity = .4, green] (0,16) -- (16,16) -- (16,12) -- (0,12) -- cycle;
    \foreach \x in {2,6,10,14}{
    \node (\x) [red] at (\x, 16.3) {$\mathbf{\pi}$};
    \draw [->, shorten >=-1.5pt, red] (\x, 16.7) arc (245:-70:1.5ex);
    %\draw[->, loop above, red] (\x) edge (\x);
    }
    
    \foreach \y in {2,6,10,14}{
    \node (\y) [red] at (-1,\y) {$\mathbf{\pi}$};
    \draw [->, shorten >=-1.5pt, red] (-0.52, \y - 0.13) arc (245:-70:1.5ex);
    %\draw[->, loop right, red] (\y) edge (\y);
    }
    
    \end{tikzpicture}    
}
        \caption{Conjugating the matrix $A$ with the permutation $\tau = (\pi, \pi, \pi, \pi)$. Applying this permutation $\tau$ may destroy the \textit{internal} coset structure of the square submatrices of dimension $2^{n-2} \times 2^{n-2}$, but nevertheless preserves their \textit{external} structure. In what follows we rely only on this external structure. }
        \label{fig:permutation_on_a}
    \end{figure}
    \begin{figure}[H]
        \centering
        \input{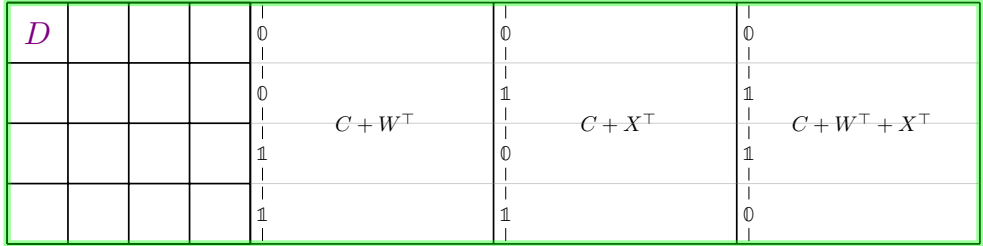}

\scalebox{.8}{
\begin{tikzpicture}

\draw[step=4cm,black,thick, -] (0,0) grid (16,4);
\draw[step=1cm,black,thick, -] (0,0) grid (4,4);
\draw[line width = 1.2mm, opacity = .4, green] (0,0) -- (0,4) -- (16,4) -- (16,0) -- cycle;
\node[violet] at (.5,3.5) {\scalebox{1.5}{$D$}};

\node (2) [Low Visible] at (6,2) {{ {$C + W^\top$}}};
\node (5) [Low Visible]at (10,2) {{  ${ C + X^\top}$}};
\node (6) [Low Visible]at (14,2) {{  ${ C + W^\top + X^\top}$}};

\foreach \y in {1,2,3}{
\draw[-,gray, opacity = .4] (4,\y) edge (16,\y);
}
\node (21) at (4.2,3.5) {\scalebox{1}{$\mbb{0}$}};
\node (22) at (4.2,2.5) {\scalebox{1}{$\mbb{0}$}};
\node (23) at (4.2,1.5) {\scalebox{1}{$\mbb{1}$}};
\node (24) at (4.2,0.5) {\scalebox{1}{$\mbb{1}$}};

\node (31) at (8.2,3.5) {\scalebox{1}{$\mbb{0}$}};
\node (32) at (8.2,2.5) {\scalebox{1}{$\mbb{1}$}};
\node (33) at (8.2,1.5) {\scalebox{1}{$\mbb{0}$}};
\node (34)at (8.2,0.5) {\scalebox{1}{$\mbb{1}$}};

\node (41) at (12.2,3.5) {\scalebox{1}{$\mbb{0}$}};
\node (42) at (12.2,2.5) {\scalebox{1}{$\mbb{1}$}};
\node (43) at (12.2,1.5) {\scalebox{1}{$\mbb{1}$}};
\node (44) at (12.2,0.5) {\scalebox{1}{$\mbb{0}$}};

\draw[-] %Ones
(4.2,1.95) -- (23) -- (4.2,1.05)
(4.2,0.95) -- (24) -- (4.2,0.05)

(8.2,2.95) -- (32) -- (8.2,2.05)
(8.2,0.95) -- (34) -- (8.2,0.05)

(12.2,2.95) -- (42) -- (12.2,2.05)
(12.2,1.95) -- (43) -- (12.2,1.05)

;
\draw[-]
(4.2,3.95) -- (21) -- (4.2,3.05)
(4.2,2.95) -- (22) -- (4.2,2.05)

(8.2,3.95) -- (31) -- (8.2,3.05)
(8.2,1.95) -- (33) -- (8.2,1.05)

(12.2,3.95) -- (41) -- (12.2,3.05)
(12.2,0.95) -- (44) -- (12.2,0.05)
;
\end{tikzpicture}
}
        \caption{The internal structure of $C$, and the external structure of the top quarter of $A$.}
        \label{fig:higher_res_permutation}
    \end{figure}
    
    \begin{claim} \label{clm:xw_notin_rowsp_c}
    If $x,w\notin \rowsp(C)$, then $D$ is a twin free matrix of rank $(n-4)$.
    \end{claim}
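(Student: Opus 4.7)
My plan is to prove $D$ is twin-free; the rank statement $\rktwo(D) = n-4$ then follows from $\rktwo(D) \le \rktwo(C) = n-4$ together with $|D| = 2^{n-4}$ and \autoref{prop:rk_vs_rows}. The argument uses three structural observations inherited from $A$ and the setup: (i) for $i \in S_{00}$, row $i$ of $A$ is $(C_{i,\cdot}, C_{i,\cdot}, C_{i,\cdot}, C_{i,\cdot})$, since $w_i = x_i = 0$ in the coset-structure expressions for the four column-blocks; (ii) the maps $k \mapsto w_k$ and $k \mapsto x_k$ are linear on $[2^{n-2}] \cong (\FF_2^{n-2},+)$, inherited from the row-coset structure of $A$; and (iii) the column-linearity $C_{r,k+k'} = C_{r,k} + C_{r,k'}$, inherited from the column-coset structure. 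From (i) and twin-freeness of $A$, a nonzero $i \in S_{00}$ with $C_{i,\cdot}=\mbb{0}$ would produce two all-zero rows of $A$, so $K \cap S_{00} = \{0\}$, where $K := \{k : C_{k,\cdot}=\mbb{0}\}$; moreover $|K| = 2^{n-2}/2^{n-4} = 4$ by the coset structure.

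My first goal is the key preliminary $w + x \notin \rowsp(C)$. If instead $w + x = C_{i_0, \cdot}$ for some $i_0$, then exactly the four indices in $i_0 + K$ realize this row. By (ii), the map $K \to \FF_2^2$, $k \mapsto (w_k, x_k)$, is linear with kernel $K \cap S_{00} = \{0\}$, and therefore a bijection onto $\FF_2^2$. Hence $\{(w_i, x_i) : i \in i_0 + K\} = \FF_2^2$; in particular, some $i^* \in i_0 + K$ satisfies $w_{i^*} + x_{i^*} = 1$. But the zero-diagonal condition on $C$ yields $0 = C_{i^*, i^*} = (w+x)_{i^*} = w_{i^*}+x_{i^*} = 1$, a contradiction.

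To prove $D$ twin-free, suppose rows $i \ne j \in S_{00}$ of $D$ coincide, and let $r := i + j \in S_{00}\setminus\{0\}$. Then $C_{r,k}=0$ for all $k \in S_{00}$. Applying (iii) to $r$: for any $k_0, k \in S_{ab}$ we have $k + k_0 \in S_{00}$ (as $S_{ab}$ is a coset of $S_{00}$), and so $C_{r,k} = C_{r,k_0} + C_{r,k+k_0} = C_{r,k_0}$. Thus row $r$ of $C$ is constant on each $S_{ab}$; the resulting value-pattern $(0, t, s, s+t)$ on $(S_{00}, S_{01}, S_{10}, S_{11})$ is exactly $sw + tx$, so $C_{r,\cdot} \in \Sp\{w, x\}$. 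Combining the preliminary with the case hypothesis $w, x \notin \rowsp(C)$ gives $\rowsp(C) \cap \Sp\{w, x\} = \{0\}$, hence $C_{r,\cdot} = \mbb{0}$. Observation (i) applied to $r \in S_{00}$ then yields $A_{r,\cdot} = \mbb{0}$, coinciding with $A_{0,\cdot}$ and contradicting twin-freeness of $A$.

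The main obstacle is the preliminary $w + x \notin \rowsp(C)$: unlike $w, x \notin \rowsp(C)$ (which is the case hypothesis), this additional fact does not follow from the earlier lemmas and must be extracted from the zero-diagonal condition of $C$ via the bijection $K \to \FF_2^2$. Once it is in hand, the rest of the argument parallels \autoref{clm:xw_in_rowsp_c} one level deeper, quartering $C$'s row- and column-indices by the $(w, x)$-pattern rather than by $w$ alone.
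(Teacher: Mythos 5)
Your proof is correct, but it takes a genuinely different route from the paper's. The paper argues through a global product structure: it shows that each of the four row-blocks $C_{w^{-1}(a)\cap x^{-1}(b)}$ is twin-free with exactly $2^{n-4}$ rows and rank at most $n-4$, so by \autoref{prop:rk_vs_rows} each one enumerates the full row space of $C$; combining this with the symmetry of $C$, it concludes that (after the reordering) $C = J_4 \otimes D$, reads off $\rktwo(D)=\rktwo(C)=n-4$ from the multiplicativity of rank under the Kronecker product, and deduces twin-freeness of $D$ from that of $A$ through this identity. You instead prove twin-freeness of $D$ directly from the internal coset (linearity) structure of the indices: two coinciding rows of $D$ force the nonzero row $r=i+j$ of $C$ to be constant on each class $S_{ab}$, hence to lie in $\Sp\{w,x\}$, and you rule this out by combining the case hypothesis $w,x\notin\rowsp(C)$ with the auxiliary fact $w+x\notin\rowsp(C)$, which you extract from the zero diagonal of $C$ via the bijection $K\to\FF_2^2$; the rank then follows from \autoref{prop:rk_vs_rows}. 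Your route never needs the identity $C=J_4\otimes D$ (whose passage from ``equal row \emph{sets}'' to ``equal \emph{matrices}'' the paper delegates to a figure), at the price of an extra lemma ($w+x\notin\rowsp(C)$) absent from the paper and of leaning on the internal coset structure that the paper explicitly avoids after conjugating by $\pi$ --- which is harmless, since you work with the index sets $S_{ab}$ before any reordering. Both arguments are sound.
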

    \begin{proof}
    To avoid repeated rows in $A$, we have
    \[
        \rows(C_{w^{-1}(0)\cap x^{-1}(0)}) = \dots = \rows(C_{w^{-1}(1)\cap x^{-1}(1)}) = V \le \FF^{2^{n-2}}_2, \text{ where } \dim(V) = n-4.
    \]
    That is, all the horizontal rectangles in $C$ are equal, thus:
    \begin{figure}[H]
        \centering
        \input{drawings/no_odd_drawings/TikzSettings}

\scalebox{.5}{
    \def\s{12}
    \def\ss{24}
    \begin{tikzpicture}[scale=.6, thick]

    \draw[step=2cm,black] (0,0) grid (8,8);
    \draw[step=2cm,black] (0 +\s,0) grid (8 + \s,8);
    \draw[step=2cm,black] (0 +\ss,0) grid (8 + \ss,8);
    
    \foreach \x in {0,\s,\ss}
    {\draw[line width = 2mm, opacity = .4, blue]
     (0 + \x,8) -- (8 + \x, 8) -- (8 + \x,0) -- (0 + \x,0) --cycle; 
    \foreach \y in {2,4,6}
    {
     \draw[line width = 2mm, opacity = .4, blue]
     (\x,\y) -- (\x + 8, \y);
    }
    }
    \foreach \y in {1,3,5,7}{
        \node (1) at (1,\y) {\scalebox{2}{$D$}};
    };
    \foreach \y in {1,3,5,7}{
        \node (1) at (1 +\s,\y) {\scalebox{2}{$D$}};
    };
    
    \foreach \x in {3,5,7}{
        \node (1) at (\x +\s,7) {\scalebox{2}{$D$}};
    };
    
    \foreach \y in {1,3,5,7}
        \foreach \x in {1,3,5,7}{
        \node (1) at (\x + \ss,\y) {\scalebox{2}{$D$}};
    };
    
    \node (2) at (10,4) {\scalebox{2}{$\Longrightarrow$}};
    \node[draw, align=center, draw=none] at (10,2.8) {Symmetry of \\ $C,D$};
    
    \node (2) at (22,4) {\scalebox{2}{$\Longrightarrow$}};
    \node[draw, align=center, draw=none] at (22,2.6) {All blue \\ rectangles \\ are equal};
    
    \node [black] (3) at (-2,4) {\scalebox{2}{$C \; = $}};
    
    \end{tikzpicture}    
}
    \end{figure}
    \noindent So $C = J_4 \otimes D$, where $J_4$ is the all-ones matrix (which is of rank-1), and hence by multiplicativity of the rank under the Kronecker product, 
    \[
        n-4 = \rktwo(C) = \rktwo(J_4)\cdot \rktwo(D) = \rktwo(D).
    \] 
    Finally, we observe that $D$ cannot have duplicate rows, as those would imply duplicates in $C_{x^{-1}(b)\cap w^{-1}(b')}$ (for any $b,b'\in \{0,1\}$), and in turn, in $A$ (which would be a contradiction). $\qedhere$
    \end{proof}   
    
    \autoref{lem:xw_rowsp_c}, alongside with \autoref{clm:xw_in_rowsp_c} and \autoref{clm:xw_notin_rowsp_c}, imply that $A$ contains a symmetric submatrix of size $2^m \times 2^m$, having no duplicate rows, zeros on its main diagonal, and rank $m\in \{ n-2, n-4\}$. However, we recall that no such matrices exist for $n \in \{1,3\}$ (see \autoref{clm:no_twinfree_n_3}), and thus by induction no such matrices exist for \textit{any} odd $n$. This concludes the proof of \autoref{thm:no_odd_n}. $\qedhere$
\end{proof}

\subsection{A Tight Construction for Odd $n$}

While \autoref{thm:no_odd_n} rules out the possibility of twin-free graphs of order $2^n$ and $\FF_2$-rank $n$ for all odd $n$, we remark that rank $(n+1)$ is indeed attainable, by utilizing the construction of \autoref{thm:infinite_family_twinfree_lowf2rank}.

\begin{corollary}
    For every odd $n$, there exists a \textit{twin-free} graph $G$ of order $2^{n}$ with $\FF_2$-rank $\le n+1$.
\end{corollary}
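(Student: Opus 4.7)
The plan is to reuse the parity product of \autoref{thm:infinite_family_twinfree_lowf2rank} and fold in a single edge to flip parity. For odd $n \ge 3$, set
\[ G^\star \;=\; K_2 \,\boxplus\, G_2^{\boxplus (n-1)/2}, \]
and for $n=1$ simply take $G^\star = K_2$ directly. The order is $v(G^\star) = 2 \cdot 2^{n-1} = 2^n$, and by \autoref{lem:par_prod_properties}(a),
\[ \rktwo(G^\star) \;\le\; \rktwo(K_2) + \rktwo\bigl(G_2^{\boxplus (n-1)/2}\bigr) \;=\; 2 + (n-1) \;=\; n+1, \]
giving both required invariants without further work.

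The main obstacle is establishing that $G^\star$ is twin-free. Crucially, one \emph{cannot} invoke \autoref{lem:par_prod_properties}(b) directly here, because although $K_2$ is twin-free, it is not negation-free: its two vertices have complementary neighbourhoods. I would instead give a direct argument that exploits the stronger properties of $G_2^{\boxplus (n-1)/2}$, namely that it is both twin-free and negation-free, as already established in the proof of \autoref{thm:infinite_family_twinfree_lowf2rank}.

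Concretely, writing $A = A_{G_2^{\boxplus(n-1)/2}}$ with rows $a_1, \dots, a_{2^{n-1}}$, the adjacency matrix of $G^\star$ has the block form
\[ \begin{bmatrix} A & A + J \\ A + J & A \end{bmatrix}, \]
whose top-half rows are $(a_i, a_i + \mathbbm{1})$ and bottom-half rows are $(a_j + \mathbbm{1}, a_j)$. Two rows within the same half are equal iff $a_i = a_j$, which is excluded by twin-freeness of $A$; a top row equals a bottom row iff $a_i = a_j + \mathbbm{1}$, which is excluded by negation-freeness of $A$. This covers all cases and yields the desired twin-free property, completing the construction.
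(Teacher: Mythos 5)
Your proposal is correct and matches the paper's proof essentially verbatim: the same construction $K_2 \boxplus G_2^{\boxplus m}$, the same sub-additivity bound from \autoref{lem:par_prod_properties}(a), and the same twin-freeness argument via the rows $(v,\bar{v})$ and $(\bar{v},v)$ using that $G_2^{\boxplus m}$ is both twin-free and negation-free. Your explicit remark that \autoref{lem:par_prod_properties}(b) cannot be invoked directly (since $K_2$ is not negation-free) is a nice clarification the paper leaves implicit, but the argument is the same.
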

\begin{proof}
    Let $n = 2m + 1$ where $m \in \mathbb{N}$, and let $G = K_2 \boxplus G_2^{\boxplus m}$ where $K_2$ is an edge. Let $A$ be the adjacency matrix of $G_2^{\boxplus m}$ and let: 
    
    \[ A_G = \left[ \begin{array}{c|c}
                \rule{0pt}{1.05\normalbaselineskip} A & \bar{A} \\
                \hline
                \rule{0pt}{1.05\normalbaselineskip} \bar{A} & A
            \end{array} \right]
    \]
    
    be the adjacency matrix of $G$, where $\bar{A} = J + A$ (over $\FF_2$). By construction, the order of $G$ is $2^n$. Furthermore, by \autoref{lem:par_prod_properties}, we have that $\rktwo(G) \le \rktwo\left(K_2\right) + \rktwo\left(G_2^{\boxplus m}\right) = 2 + 2m = n + 1$. Finally, since $A$ is \textit{negation-free} and \textit{twin-free}, then $A_G$ is \textit{twin-free} (every row of $A_G$ is either $(v,\bar{v})$ or $(\bar{v}, v)$ for some row $v$ of $A$).
\end{proof}
\section{Future Work}
Lov\'asz and Kotlov \cite{kotlov1996rank} showed that, over the reals, any twin-free graph of order $2^n$ must have rank at least $2n - \mathcal{O}(1)$. In contrast, we show that over $\mathbb{F}_2$ no non-trivial bound holds, except up to an additive constant of $1$ for the case of odd $n$. Subsequently, it seems natural to ask the following intermediate question: what is the behaviour of rank-extremal twin-free graphs, where \textit{the rank is taken over a finite field larger than $\FF_2$}? Do non-trivial lower bounds hold in such cases, and conversely, can the extremal families of such cases be constructed or classified? \textit{These are left as open questions.}

\section{Acknowledgements}

We would like to thank Nati Linial for helpful discussions on this paper. We would also like to thank Sam Adriaensen for bringing to our attention the results of Godsil and Royle.

\bibliography{f2rank}
\bibliographystyle{alpha}

\end{document}